\documentclass[hidelinks,onefignum,onetabnum]{siamart251216}

\usepackage{comment}

\usepackage{amsmath,amssymb,subfigure}      
\usepackage{booktabs,multirow,threeparttable}  
\usepackage{lipsum}
\usepackage{amsfonts}
\usepackage{graphicx}
\usepackage{epstopdf}
\usepackage{algorithmic}
\usepackage{mathrsfs} 
\usepackage{multirow}

\ifpdf
  \DeclareGraphicsExtensions{.eps,.pdf,.png,.jpg}
\else
  \DeclareGraphicsExtensions{.eps}
\fi

\newsiamremark{remark}{Remark}
\newsiamremark{Assumption}{Assumption}
\newsiamremark{assumption}{Assumption}
\crefname{Assumption}{Assumption}{Hypotheses}
\newsiamthm{claim}{Claim}

\usepackage{amsopn}

\renewcommand{\div}{\operatorname{div}}
\graphicspath{{./figure/}{../figure/}}

\newcommand{\dx}{\,{\rm d}x}
\newcommand{\ds}{\,{\rm d}s}

\newcommand{\Oplus}{\ensuremath{\vcenter{\hbox{\scalebox{1.5}{$\oplus$}}}}}

\newcommand{\tr}{\operatorname{tr}}

\newcommand{\dev}{\operatorname{dev}}
\newcommand{\sym}{\operatorname{sym}}

\title{A Staggered Discontinuous Galerkin Method for \\
linear elasticity problem on Polytopal Meshes\thanks{Submitted to the editors DATE.
\funding{The first author was supported by DMS-2012465 and DMS-2309785. 
The third and the fourth author were supported by the National Key Research and Development Program of China (grant No. 2023YFA1008803), and the Key Laboratory of Symbolic Computation and Knowledge Engineering of Ministry of Education of China housed at Jilin University.}}}

\author{Long Chen\thanks{Department of Mathematics,
University of California at Irvine,
Irvine, CA 92697, USA
  (\email{chenlong@math.uci.edu}).}
\and Xuehai Huang\thanks{School of Mathematics, 
Shanghai University of Finance and Economics, 
Shanghai 200433, China  
  (\email{huang.xuehai@sufe.edu.cn}).}
\and Ruishu Wang\thanks{School of Mathematics,
Jilin University, 
Changchun, Jilin 130012, China
  (\email{wangrs\_math@jlu.edu.cn}).}
  \and Ran Zhang\thanks{School of Mathematics,
Jilin University, 
Changchun, Jilin 130012, China
  (\email{Zhangran@jlu.edu.cn}).}
}

\begin{document}

\maketitle

\begin{abstract}
This paper develops a novel staggered discontinuous Galerkin (SDG) method for linear elasticity based on the Hellinger-Reissner variational principle. We construct symmetric stress spaces with normal continuity across element boundaries on arbitrary polytopal meshes, while approximating the displacement field using piecewise polynomial functions defined on the same meshes. The method is locking-free and satisfies a local balance of linear momentum and angular momentum. We present a comprehensive theoretical analysis, including proofs of stability and error estimates. The formulation admits a hybridizable structure, which significantly simplifies the numerical implementation. Numerical experiments validate the theoretical results and demonstrate the effectiveness of the proposed approach.
\end{abstract}

\begin{keywords}
Staggered discontinuous Galerkin method, Polytopal meshes, Linear elasticity, Hellinger-Reissner variational principle, Error estimate
\end{keywords}

\section{Introduction}
We develop a novel staggered discontinuous Galerkin (SDG) method for the linear elasticity problem. The Hellinger-Reissner mixed variational principle seeks the stress $\boldsymbol{\sigma}\in H(\div,\Omega;\mathbb S)$ and the displacement $\boldsymbol{u}\in L^2(\Omega;\mathbb{R}^d)$ such that
\begin{subequations}\label{equ_mixed_formulation}
\begin{align}
(\mathcal{A}\boldsymbol{\sigma},\boldsymbol{\tau}) + (\div\boldsymbol{\tau},\boldsymbol{u}) &= 0 \qquad\quad\;\;\, \forall~\boldsymbol{\tau}\in H(\div,\Omega; \mathbb S), \\
(\div\boldsymbol{\sigma},\boldsymbol{v}) &= -(\boldsymbol{f},\boldsymbol{v}) \quad \forall~\boldsymbol{v}\in L^2(\Omega; \mathbb{R}^d),
\end{align}
\end{subequations}
where the symmetric stress space is
$$ H(\div,\Omega; \mathbb S)=\{\boldsymbol{\tau}\in [L^2(\Omega)]^{d\times d}: \div \boldsymbol{\tau}\in L^2(\Omega; \mathbb{R}^d), \boldsymbol{\tau}=\boldsymbol{\tau}^{\intercal}\}. $$
The transition from the primal displacement formulation to this mixed system, including the definition of the compliance tensor $\mathcal{A}$, is detailed in Section 2.

Discretizing this mixed formulation is difficult because the scheme must: (i) satisfy the inf-sup condition; (ii) ensure stress symmetry and normal continuity; and (iii) provide optimal order of convergence. Many finite elements have been developed to meet these requirements \cite{ArnoldAwanouWintherMC2008,ArnoldWintherNM2002,ChenHuang2022,ChenHuang2024,HuZhangSCM2015,HuangZhangZhouZhuACM2024}. 
However, these elements typically include vertex degrees of freedom (DoFs), which makes them non-hybridizable.

To avoid vertex DoFs, hybridizable $H(\div)$-conforming symmetric stress elements on barycentric refinements were introduced in~\cite{ArnoldDouglasGupta1984,ChenHuang2025,ChristiansenHu2023} 
or the Worsey-Farin split, which in three dimensions divides each tetrahedron into twelve sub-tetrahedra~\cite{GongGopalakrishnanGuzmanNeilan2023}. Other methods use rational shape functions~\cite{GuzmanNeilan2014} or virtual elements for symmetric tensors~\cite{DassiLovadinaVisinoni2020}. A hybridizable method was also presented in~\cite{GongWuXu2019}, but its stability depends on Scott-Vogelius Stokes elements~\cite{ScottVogelius1985,Zhang2005} 
on specific meshes.

For completeness, we also note normal-normal continuous elements~\cite{CarstensenHeuer2025}, TDNNS method~\cite{PechsteinSchoeberl2011,PechsteinSchoeberl2018}, nonconforming mixed finite elements~\cite{ArnoldAwanouWinther2014,ArnoldWintherM32003,GopaGuzmSiam2011,HuMa2019,YiM32006}, 
and discontinuous Galerkin methods~\cite{CarstHellSiam2016,HuangHuangANM2018,SoonCockburnStolIJNAM2009,WangWuXuJSC2020} 
for elasticity.

Managing continuity across mesh boundaries is central to finite element design. Standard mixed methods use strong continuity for one variable and weak continuity for the other. In contrast, the SDG framework \cite{ChungEngquist06,ChungWave09} uses a primal-dual mesh structure where each variable is continuous only on its own mesh boundaries. For mixed elasticity, \cite{LeeKimJSC2016,ZhaoParkSIAM2020} showed that the stress tensor can be discontinuous across primal edges but remains normal-continuous on dual edges, while the displacement follows the opposite pattern. This staggered continuity ensures stability without extra stabilization terms. However, current SDG methods for elasticity only enforce weak symmetry rather than strong symmetry of the stress tensor \cite{LeeKimJSC2016,ZhaoParkSIAM2020}.

In this work, we develop a new SDG scheme for mixed linear elasticity that interchanges the continuity assignments of stress and displacement compared with \cite{LeeKimJSC2016,ZhaoParkSIAM2020}. Let $\mathcal K_h$ be a polyhedral mesh and $\mathcal K_h^*$ be its dual mesh by adding a vertex to the center of each element in $\mathcal K_h$. In the proposed new SDG method, the stress tensor is piecewise $\mathbb P_k(T; \mathbb S)$ but in $H(\div,\mathbb S; \omega_F)$ for the patch containing primal face $F$. The stress is strongly symmetric, in contrast to the weak symmetry employed in previous SDG formulations~\cite{LeeKimJSC2016,ZhaoParkSIAM2020}. The displacement, is approximated by piecewise polynomials over general polytopal elements which is in $\mathbb P_{k+1}(K;\mathbb R^d)\subset H^1(K;\mathbb R^d)$ for the primal cell $K\in \mathcal K_h$. The normal continuity of stress is on the boundary of the primal element, while the displacement is continuous on the boundary of dual elements. This complementary continuity ensures a discrete inf-sup condition. 

In classical linear elasticity, the equilibrium equation $
\operatorname{div}\boldsymbol{\sigma} = \boldsymbol{f}$
expresses the balance of linear momentum. 
The symmetry of the stress tensor, $\boldsymbol{\sigma}=\boldsymbol{\sigma}^{\intercal}$, follows from the balance of angular momentum. 
Together, these balance laws provide the physical basis for symmetric, $H(\mathrm{div})$-conforming stress formulations. In our scheme, both important balance law are preserved locally.

Reversing continuity assignments in an SDG framework was recently applied to the Poisson equation \cite{ChenHuangParkWangJSC2025}. That work highlights links between this SDG design and nonconforming Crouzeix-Raviart, weak Galerkin, and virtual element methods. Extending this approach to linear elasticity is difficult. It requires symmetric-tensor DoFs that are normal-continuous across polytopal boundaries. The method is locking-free; its stability and convergence are robust in the incompressible limit $\lambda\to\infty$.

The remainder of the paper is organized as follows.
Section~\ref{sec:space} introduces the finite element spaces used in the SDG discretization and establishes their unisolvence.
Section~\ref{sec:SDG} presents the SDG scheme and proves its well-posedness.
Section~\ref{sec:hybrid} describes the hybridization procedure and derives optimal a priori error estimates.
Section~\ref{sec:numerexam} reports numerical experiments that validate the theoretical results. 
Section~\ref{sec:conclusion} provides concluding remarks.

Throughout this paper, the notation ``$\lesssim\cdots $" means ``$\leq C\cdots$" for a generic constant $C > 0$ independent of the mesh size $h$ and the Lam\'{e} parameter $\lambda$, though its value may vary in different contexts. The equivalence ``$A\eqsim B$" denotes both ``$A\lesssim B$" and ``$B\lesssim A$".

\section{Finite Element Spaces}\label{sec:space}
This section introduces the approximation spaces for the displacement and stress variables with focus on the stress space and its associated DoFs, which preserve symmetry and normal continuity across polytopal elements. 

\subsection{Linear elastic equations}
The linear elasticity problem is given by
\begin{equation}\label{elasproblem}
-\div(2\mu\boldsymbol{\varepsilon}(\boldsymbol{u})+\lambda(\div\boldsymbol{u})\boldsymbol{I})=\boldsymbol{f} \;\;\textrm{ in } \Omega;\quad \boldsymbol{u}|_{\partial\Omega}=\boldsymbol{0},
\end{equation}
where $\Omega\subset \mathbb{R}^d\, (d\geq 2)$ is a bounded polytope. Here $\boldsymbol{u}$ denotes the displacement field, $\boldsymbol{f}$ is the body force, $\mu$ and $\lambda$ are the Lam\'{e} constants,  and $\boldsymbol{I}$ is the identity matrix.
Introducing the stress tensor 
$\boldsymbol{\sigma}=2\mu\boldsymbol{\varepsilon}(\boldsymbol{u})
+\lambda(\div\boldsymbol{u})\boldsymbol{I}$,
the problem \eqref{elasproblem} can be rewritten as the first-order system:
\begin{equation}
\left\{
\begin{aligned}
-\div\boldsymbol{\sigma}&=\boldsymbol{f} \; \qquad\text{in} \;\Omega ,
\\
\mathcal{A}\boldsymbol{\sigma}&=\boldsymbol{\varepsilon}(\boldsymbol{u}) \; \,\,\;\,\text{in} \;\Omega,
\\
\boldsymbol{u}&=\boldsymbol{0} \,\;\;\;\;\, \quad\text{on} \;\partial\Omega,\label{equ_elasticity}
\end{aligned}
\right.
\end{equation}
where
$$
\mathcal{A}\boldsymbol{\sigma}=\frac{1}{2\mu}\boldsymbol{\sigma}-\frac{\lambda}{2\mu(2\mu+d\lambda)}\tr(\boldsymbol{\sigma})\boldsymbol{I} = \frac{1}{2\mu}\dev\boldsymbol{\sigma}+\frac{1}{d(2\mu+d\lambda)}\tr(\boldsymbol{\sigma})\boldsymbol{I},
$$ 
$\tr(\boldsymbol{\sigma})$ denotes the trace of $\boldsymbol{\sigma}$, and
\begin{equation}\label{eqn_sigma_h_D}
\dev\boldsymbol{\sigma}:=\boldsymbol{\sigma} - \frac{1}{d}\tr(\boldsymbol{\sigma})\boldsymbol{I}.
\end{equation}
The mixed variational formulation \eqref{equ_mixed_formulation} is derived from the Hellinger-Reissner mixed variational principle.

\subsection{Primal and dual meshes}
Let $\mathcal{K}_h$ be a shape-regular polytopal mesh of the domain $\Omega$; see Fig.~\ref{partitions1}. We assume each element $K\in\mathcal{K}_h$ is a star-shaped polytope with a bounded chunkiness parameter. Let $\mathcal{F}_h$ be the set of $(d-1)$-dimensional faces of $\mathcal{K}_h$, and $\mathring{\mathcal{F}}_h^{K}=\mathcal{F}_h \setminus \partial \Omega$. We call $\mathcal{K}_h$ the primal mesh.

For each $K\in\mathcal{K}_h$, let $\boldsymbol{x}_K$ be the center of the largest ball inside $K$. Connecting $\boldsymbol{x}_K$ to the vertices of $K$ gives a shape-regular refinement $\mathcal{T}_h$; see Fig.~\ref{partitions2}. Let $\mathcal{T}_h(K)$ be the set of pyramids in $K$. Let $\mathcal{F}_h^T$ be the set of $(d-1)$-dimensional faces of $\mathcal{T}_h$, and $\mathring{\mathcal{F}}_h^{T}=\mathcal{F}_h^T \setminus \partial \Omega$. Let $\lambda_{\boldsymbol{x}_K}$ be the piecewise linear function on $\mathcal T_h(K)$ such that $\lambda_{\boldsymbol{x}_K}(\boldsymbol{x}_K)=1$ and $\lambda_{\boldsymbol{x}_K}|_F=0$ for any primal face $F\in \mathcal{F}_h$. 

Faces in $\mathcal F_h^K$ are primal faces. For each $F\in\mathcal{F}_h$, let $\omega_F$ be the union of pyramids in $\mathcal{T}_h$ sharing $F$. These diamond-shaped regions form the dual mesh:
\begin{equation*}
\mathcal{K}_h^*=\{\omega_F, F\in \mathcal{F}_h\}.
\end{equation*}
Faces of $\partial \omega_F$ are dual faces and the set is denoted by $\mathcal F_h^*$; see Fig.~\ref{partitions3}. By definition, $\mathcal F_h^*\cap \mathcal F_h$ consists of faces on the boundary. 

\begin{figure}[hbtp!]
\label{partitions}
\subfigure[The primal mesh $\mathcal{K}_h$.]{
\begin{minipage}[t]{0.315\linewidth}
\centering
\includegraphics*[width=3.75cm]{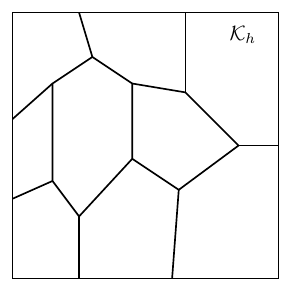} \label{partitions1}
\quad
\end{minipage}}
\subfigure[The triangulation mesh $\mathcal{T}_h$.]
{\begin{minipage}[t]{0.315\linewidth}
\centering
\includegraphics*[width=3.75cm]{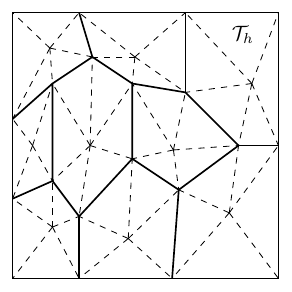} \label{partitions2}
\end{minipage}}
\subfigure[The dual mesh $\mathcal{K}_h^*$.]
{\begin{minipage}[t]{0.315\linewidth}
\centering
\includegraphics*[width=3.75cm]{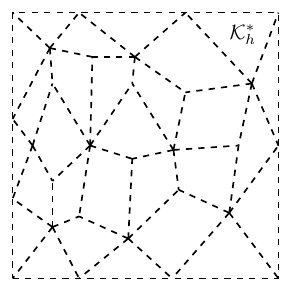} \label{partitions3}
\end{minipage}}
\caption{The polygonal and triangle partitions.}
\end{figure}

\subsection{Tangential-normal decomposition}
For $K\in\mathcal{K}_h$, let $\mathbb P_k(K)$ be the space of polynomials on $K$ of degree at most $k$. Let $\mathbb{P}_k(K; \mathbb{X})=\mathbb{P}_k(K)\otimes \mathbb{X}$, where $\mathbb{X}$ is $\mathbb{R}^d$ for vectors, $\mathbb{S}$ for symmetric matrices, and $\mathbb{K}$ for skew-symmetric matrices. 

Let $F \in \partial K$ be a face with unit normal $\boldsymbol{n}_F$ and orthonormal tangent vectors $\{\boldsymbol{t}_i^{F}\}_{i=1}^{d-1}$. For any $\boldsymbol{v}\in\mathbb{R}^d$, the tangential part on $F$ is
$$
\Pi_F \boldsymbol{v}=(\boldsymbol{I}-\boldsymbol{n}_F\otimes\boldsymbol{n}_F)\boldsymbol{v}
=\sum_{i=1}^{d-1}(\boldsymbol{v}\cdot\boldsymbol{t}_i^{F})\boldsymbol{t}_i^{F}.
$$

Define the following spaces on $F$:
\begin{align*}
\mathscr T^F&:=\textrm{span}\{\boldsymbol{t}_1^F, \ldots, \boldsymbol{t}_{d-1}^F\}, \\
\mathscr T^F(\mathbb S)&:=\textrm{span}\{{\rm sym}(\boldsymbol{t}_i^F\otimes \boldsymbol{t}_j^F) \;\;\textrm{ for } 1\leq i\leq j\leq d-1\}, \\
\mathscr N^F(\mathbb S) &:= \sym(\mathscr T^F\otimes\boldsymbol{n}_F)\oplus {\rm span}\{\boldsymbol{n}_F\otimes\boldsymbol{n}_F\},\\
\mathscr T^F(\mathbb K)&:=\textrm{span}\{{\rm skw}(\boldsymbol{t}_i^F\otimes \boldsymbol{t}_j^F) \;\;\textrm{ for } 1\leq i< j\leq d-1\}.
\end{align*}
These subspaces give the following $\boldsymbol{t}$-$\boldsymbol{n}$ decompositions
\begin{align}\label{eq:decS}
\mathbb S&= \mathscr T^F(\mathbb S)\oplus \mathscr N^F(\mathbb S), \quad 
\\
\mathbb K&= \mathscr T^F(\mathbb K)\oplus \mathscr N^F(\mathbb K), \quad \mathscr N^F(\mathbb K)={\rm skw}(\mathscr T^F\otimes\boldsymbol{n}_F).
\end{align}

\subsection{Local finite element spaces for symmetric stress}
In this subsection, we assume $k\geq 1$. We define the DoFs \eqref{SigmaKDoFs} for the local stress space $\Sigma_h^{-1}(K)$. 

Let $T\in\mathcal{T}_h(K)$ be a sub-element of the polytope $K\in\mathcal{K}_h$, and let $F=\partial K \cap \partial T$ be their common boundary face. We define $\lambda_F$ as the level set function for the face $F$ with respect to $K$; that is, 
$\lambda_{F}$ is a linear function on $T$ satisfying $\lambda_{F}(\boldsymbol{x}_K)=1$ and $\lambda_{F}|_F=0$. We have the decomposition $\mathbb{P}_{k}(T)=\mathbb{P}_{k}(F)\oplus\lambda_F\mathbb{P}_{k-1}(T)$.

Let $K \in \mathcal{K}_h$ be a polytope with $n$ boundary faces $F_i$ for $i=1,\ldots,n$. For each face $F_i$, let $T_i \in \mathcal{T}_h(K)$ be the sub-element such that $F_i \subset \partial T_i$. Let $\boldsymbol{n}_i$ and $\boldsymbol{t}_i^j$ ($j=1,\ldots,d-1$) be the unit normal and tangent vectors of $T_i$ on $F_i$. We define the local symmetric tensor space on $K$ as the discontinuous polynomial space
\begin{align}\label{eq:202411132}
\Sigma_h^{-1}(K)&=\Oplus_{i=1}^n\chi_{T_i}\Sigma_k(T_i)=\{\boldsymbol{\tau}\in L^2(K;\mathbb S): \boldsymbol{\tau}|_T\in \Sigma_k(T) \textrm{ for }T\in\mathcal{T}_h(K)\},
\end{align}
where $\chi_{T_i}$ is the characteristic function of $T_i$ and $\Sigma_k(T)=\mathbb P_k(T;\mathbb S)$. We  define the bubble space
$$ \mathbb{B}_k(\div,K;\mathbb{S})=\{\boldsymbol{\tau}\in \Sigma_h^{-1}(K): \tr\boldsymbol{\tau} :=\boldsymbol{\tau}\boldsymbol{n}|_{\partial K}=\boldsymbol{0}\}.$$

Consider the trace on $F$: $\tr_F\boldsymbol{\tau}=(\boldsymbol{\tau}\boldsymbol{n}_F)|_F$. We have $\lambda_F|_F = 0$, $\tr_F \mathscr T^F(\mathbb S) = 0$, and $\tr_F \mathbb{P}_{k}(F)\otimes \mathscr N^F(\mathbb S) = \mathbb P_k(F; \mathbb R^d) =: V_k(F)$. Therefore, we have the following characterization
\begin{equation}\label{eq:bubblekKgeodec}
\begin{aligned}
\mathbb{B}_k(\div,K;\mathbb{S})={}&\Oplus_{i=1}^n\lambda_{\boldsymbol{x}_K}\chi_{T_i}\sym(\mathbb{P}_{k-1}(T_i;\mathbb R^d)\otimes\boldsymbol{n}_i) \\
\oplus &\Oplus_{i=1}^n\chi_{T_i}\mathbb{P}_{k}(T_i;\mathscr T^{F_i}(\mathbb S)).
\end{aligned}
\end{equation}

\begin{lemma}\label{lemma_Geo_Dec}
For $K\in\mathcal{K}_h$, we have the geometric decomposition
\begin{align}
\label{eq:SigmakKgeodec}
\Sigma_h^{-1}(K)&=\mathbb{B}_k(\div,K;\mathbb{S})\oplus\Oplus_{i=1}^n\chi_{T_i}\sym( V_{k}(F_i)\otimes\boldsymbol{n}_i), 
\end{align}
and the trace characterization
\begin{equation}\label{eq:SigmaktracepartK}
  \tr(\Sigma_h^{-1}(K))=\prod_{i=1}^n V_{k}(F_i).
\end{equation}
\end{lemma}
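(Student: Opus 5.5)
The plan is to work sub-element by sub-element. The space $\Sigma_h^{-1}(K)$ imposes no continuity across the interior faces of $\mathcal T_h(K)$, and each boundary face $F_i$ of $K$ lies in exactly one pyramid $T_i$. Both \eqref{eq:SigmakKgeodec} and \eqref{eq:SigmaktracepartK} therefore reduce to statements about a single $T=T_i$ with its boundary face $F=F_i$ and normal $\boldsymbol n=\boldsymbol n_i$. Two facts will be used throughout:
\begin{itemize}
\item $\lambda_{\boldsymbol{x}_K}|_{T}=\lambda_F$, since both are linear on $T$, vanish on $F$ and equal $1$ at $\boldsymbol{x}_K$;
\item $\mathscr N^F(\mathbb S)=\sym(\mathbb R^d\otimes\boldsymbol n)$, since $\sym(\boldsymbol n\otimes\boldsymbol n)=\boldsymbol n\otimes\boldsymbol n$.
\end{itemize}

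First I split the tensor part with the $\boldsymbol t$-$\boldsymbol n$ decomposition \eqref{eq:decS}:
$$\mathbb P_k(T;\mathbb S)=\mathbb P_k(T;\mathscr T^F(\mathbb S))\oplus\mathbb P_k(T;\mathscr N^F(\mathbb S)).$$
Next I split the scalar part of the second summand with $\mathbb P_k(T)=\mathbb P_k(F)\oplus\lambda_F\mathbb P_{k-1}(T)$. Here $\mathbb P_k(F)$ is understood as a fixed polynomial extension into $T$, for instance constant in the direction $\boldsymbol n$. This gives
$$\mathbb P_k(T;\mathscr N^F(\mathbb S))=\lambda_F\,\mathbb P_{k-1}(T)\otimes\mathscr N^F(\mathbb S)\ \oplus\ \mathbb P_k(F)\otimes\mathscr N^F(\mathbb S).$$
Using the second fact above, these two pieces are $\lambda_F\sym(\mathbb P_{k-1}(T;\mathbb R^d)\otimes\boldsymbol n)$ and $\sym(V_k(F)\otimes\boldsymbol n)$. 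Summing over $i$ and using the first fact, $\lambda_F=\lambda_{\boldsymbol x_K}$ on $T_i$, we obtain $\Sigma_h^{-1}(K)=\mathcal B\oplus\mathcal E$. Here $\mathcal B$ denotes the right-hand side of \eqref{eq:bubblekKgeodec} and $\mathcal E=\Oplus_i\chi_{T_i}\sym(V_k(F_i)\otimes\boldsymbol n_i)$.

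It remains to identify $\mathcal B$ with $\mathbb B_k(\div,K;\mathbb S)$ and to compute the trace; both follow from one algebraic identity. For $\boldsymbol v\in\mathbb R^d$,
$$\sym(\boldsymbol v\otimes\boldsymbol n)\boldsymbol n=\tfrac12\bigl(\boldsymbol v+(\boldsymbol v\cdot\boldsymbol n)\boldsymbol n\bigr).$$
This map $\boldsymbol v\mapsto\frac12(\boldsymbol I+\boldsymbol n\otimes\boldsymbol n)\boldsymbol v$ is invertible, with eigenvalues $1/2$ on $\mathscr T^F$ and $1$ on $\boldsymbol n$. Consequently:
\begin{itemize}
\item $\tr_{F_i}$ maps $\chi_{T_i}\sym(V_k(F_i)\otimes\boldsymbol n_i)$ bijectively onto $V_k(F_i)$.
\item $\tr_{F_i}$ annihilates $\mathbb P_k(T_i;\mathscr T^{F_i}(\mathbb S))$, since $\mathscr T^{F}(\mathbb S)\boldsymbol n=0$, and annihilates $\lambda_{F_i}(\cdots)$, since $\lambda_{F_i}|_{F_i}=0$.
\end{itemize}
Hence $\mathcal B\subseteq\mathbb B_k(\div,K;\mathbb S)$ and $\mathcal E\cap\mathbb B_k(\div,K;\mathbb S)=\{0\}$. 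Since $\Sigma_h^{-1}(K)=\mathcal B\oplus\mathcal E$, equality $\mathcal B=\mathbb B_k(\div,K;\mathbb S)$ follows: any bubble $\boldsymbol\tau=\boldsymbol\tau_B+\boldsymbol\tau_E$ has $\tr\boldsymbol\tau_E=0$, which forces $\boldsymbol\tau_E=0$. This also justifies \eqref{eq:bubblekKgeodec}.

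For the trace characterization, the components of $\mathcal E$ live on distinct $T_i$ and each face trace only sees its own $T_i$. Therefore $\tr$ restricted to $\mathcal E$ is the product of the individual bijections onto $V_k(F_i)$. This gives \eqref{eq:SigmaktracepartK}, with the inclusion $\subseteq$ being immediate from the definition.

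The main point requiring care is the directness of the scalar splitting $\mathbb P_k(T)=\mathbb P_k(F)\oplus\lambda_F\mathbb P_{k-1}(T)$, together with the choice of extension of $\mathbb P_k(F)$. I would confirm it by a dimension count plus the observation that a polynomial in the intersection vanishes on $F$, hence is divisible by $\lambda_F$, hence is zero in the complement. Everything else is the linear algebra of the $\boldsymbol t$-$\boldsymbol n$ frame.
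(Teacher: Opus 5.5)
Your proposal is correct and follows the route the paper sketches in the paragraph before the lemma. That route combines the $\boldsymbol t$-$\boldsymbol n$ splitting $\mathbb S=\mathscr T^F(\mathbb S)\oplus\mathscr N^F(\mathbb S)$ with the scalar splitting $\mathbb P_k(T)=\mathbb P_k(F)\oplus\lambda_F\mathbb P_{k-1}(T)$ on each pyramid, and uses that $\lambda_F(\cdots)$ and $\mathscr T^F(\mathbb S)$ have vanishing normal trace on $F$ while $\tr_F$ maps $\mathbb P_k(F)\otimes\mathscr N^F(\mathbb S)$ onto $V_k(F)$. You additionally make explicit two steps the paper leaves implicit: the identity $\sym(\boldsymbol v\otimes\boldsymbol n)\boldsymbol n=\tfrac12(\boldsymbol I+\boldsymbol n\otimes\boldsymbol n)\boldsymbol v$, which gives bijectivity of the trace on the face component, and the kernel argument identifying the explicit sum with $\mathbb B_k(\div,K;\mathbb S)$.
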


Using \eqref{eq:SigmakKgeodec} and \eqref{eq:SigmaktracepartK}, $\Sigma_h^{-1}(K)$ is uniquely determined by the DoFs:
\begin{subequations}
\label{equ_dofs_dd_polytope}
\begin{align}
\label{equ_dofs_dd_polytope1}
\int_{F}\boldsymbol{\sigma}\boldsymbol{n}\cdot \boldsymbol{q}\ds,&\quad \boldsymbol{q}\in   V_{k}(F), F\in \partial K, \\
\label{equ_dofs_dd_polytope2}
\int_K\boldsymbol{\sigma}:\boldsymbol{\tau} \dx, & \quad  \boldsymbol{\tau}\in \mathbb{B}_k(\div ,K;\mathbb{S}).
\end{align}
\end{subequations}

To prove the discrete inf-sup condition, we shall separate the DoFs for $\mathbb{P}_k(K;\mathbb{S})$ from the component $\Oplus_{i=1}^n\chi_{T_i}\mathbb{P}_{k}(T_i;\mathscr T^{F_i}(\mathbb S))$ for the bubble space $\mathbb{B}_k(\div ,K;\mathbb{S})$.
The following construction is only theoretical. With hybridization, we do not need to figure out the interior DoFs exactly. 

\begin{assumption}\label{meshassumption}
There exist $d(d+1)/2$ symmetric tensors $b_\ell$ such that
\begin{equation}
\mathbb{S}=\text{span}\left\{ b_{\ell}, \ell=1,\ldots,d(d+1)/2\right\},\label{equ_assumption_ddpolytope}
\end{equation}
where $\mathscr T^{F_i}(\mathbb S)=\textrm{span}\{ b_i^{m}, m=1,\ldots,d(d-1)/2\}$ for $F_i\in \partial K$, and 
\begin{equation}
\left\{ b_\ell\right\} \subset \left\{ b_i^m, i=1,\ldots,n, m=1,\ldots,d(d-1)/2\right\}.\label{equ_edge_vector_property}
\end{equation}
\end{assumption}

\begin{lemma}
Let $K$ be a 2D polygon. Then $K$ satisfies Assumption \ref{meshassumption} if $K$ have at least three edges that are pairwise non-collinear.
\end{lemma}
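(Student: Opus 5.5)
In 2D we have $d(d+1)/2=3$ and $d(d-1)/2=1$. For each edge $F_i$ with unit tangent $\boldsymbol{t}_i$, the space $\mathscr T^{F_i}(\mathbb S)$ is therefore spanned by the single rank-one tensor $b_i^1=\boldsymbol{t}_i\otimes\boldsymbol{t}_i$. Assumption~\ref{meshassumption} then reduces to one concrete requirement: among the tensors $\{\boldsymbol{t}_i\otimes\boldsymbol{t}_i\}_{i=1}^n$ we can pick three that span $\mathbb S$, which has dimension $3$. The plan is to pick three edges $F_{i_1},F_{i_2},F_{i_3}$ that are pairwise non-collinear, which the hypothesis provides. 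Since the edges lie on different lines through no common direction, their tangents $\boldsymbol{t}_{i_1},\boldsymbol{t}_{i_2},\boldsymbol{t}_{i_3}$ are pairwise non-parallel. I will set $b_\ell=\boldsymbol{t}_{i_\ell}\otimes\boldsymbol{t}_{i_\ell}$ for $\ell=1,2,3$. Property \eqref{equ_edge_vector_property} then holds by construction, and only the spanning property \eqref{equ_assumption_ddpolytope} remains.

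For the spanning, I write $\boldsymbol{t}_{i_\ell}=(\cos\theta_\ell,\sin\theta_\ell)^{\intercal}$ with angles $\theta_\ell\in[0,\pi)$. These angles are distinct because the tangents are pairwise non-parallel. In the basis $\{e_1\otimes e_1,\ \sym(e_1\otimes e_2),\ e_2\otimes e_2\}$ of $\mathbb S$, the tensor $\boldsymbol{t}\otimes\boldsymbol{t}$ has coordinates $(\cos^2\theta,\,2\cos\theta\sin\theta,\,\sin^2\theta)$. It therefore suffices to show that the $3\times3$ matrix with rows $(\cos^2\theta_\ell,\,\cos\theta_\ell\sin\theta_\ell,\,\sin^2\theta_\ell)$ is nonsingular.

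The cleanest route is a duality argument. Suppose a symmetric $\boldsymbol{\tau}$ is orthogonal, in the Frobenius inner product, to all three $b_\ell$. This means $\boldsymbol{t}_{i_\ell}^{\intercal}\boldsymbol{\tau}\,\boldsymbol{t}_{i_\ell}=0$ for $\ell=1,2,3$. The function $q(\boldsymbol{x})=\boldsymbol{x}^{\intercal}\boldsymbol{\tau}\boldsymbol{x}$ is a binary quadratic form, and it vanishes on three pairwise non-parallel lines through the origin. A nonzero binary quadratic form factors over $\mathbb C$ into two linear factors, so its real zero set is at most two lines through the origin. Hence $q\equiv0$, and so $\boldsymbol{\tau}=0$. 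It follows that the span of the three $b_\ell$ has trivial orthogonal complement in $\mathbb S$, so they span $\mathbb S$.

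Equivalently, one can compute the determinant directly. After dividing each row by $\cos^2\theta_\ell$ (or rotating so that no $\cos\theta_\ell$ vanishes), the matrix becomes a Vandermonde matrix in $\tan\theta_\ell$. Its determinant is nonzero because the values $\tan\theta_\ell$ are distinct.

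The only mild subtlety is interpreting "pairwise non-collinear" as "pairwise non-parallel tangents." Two distinct edges of a polygon could lie on parallel lines, for example opposite sides of a rectangle. The hypothesis must exclude this, so I will read it as pairwise non-parallel directions; with that reading the argument above is complete. Beyond this, there is no real obstacle.
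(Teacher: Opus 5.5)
Your proof is correct, but it takes a different route from the paper's. The paper extends three pairwise non-collinear edges to lines and asserts that these form a triangle. It then invokes \cite[Lemma 2.1]{Hu_JCM_2015}: the tensors $\boldsymbol{t}\otimes\boldsymbol{t}$ built from the edge tangents of a nondegenerate simplex form a basis of $\mathbb S$. You instead prove the spanning directly, in one of two ways. The first uses the fact that a nonzero binary quadratic form $\boldsymbol{x}^{\intercal}\boldsymbol{\tau}\boldsymbol{x}$ vanishes on at most two lines through the origin. The second uses the Vandermonde determinant in $\tan\theta_\ell$. Either way you need only that the three tangent directions are pairwise non-parallel.

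This makes your argument self-contained and shows that the triangle plays no role; only the directions enter. That is slightly more robust than the paper's wording. For a non-convex polygon, three pairwise non-parallel edge lines can be concurrent. For example, the hexagon with vertices $(2,0),(4,0),(4,4),(2,2),(0,4),(0,2)$ has edges on the $x$-axis, the $y$-axis and the line $y=x$, all through the origin. In that case the paper's argument would first have to translate one line to obtain a triangle with the same tangents. The paper's route, in turn, is shorter and ties the 2D case to the simplex result it reuses in the 3D lemma.

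Finally, reading ``pairwise non-collinear'' as ``pairwise non-parallel'' is exactly what the paper's argument also needs. Extensions of parallel edges never meet, as the remark on rectangles after the lemma illustrates.
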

\begin{proof}
Take three edges of K and denote them as $\boldsymbol{e}_1, \boldsymbol{e}_2, \boldsymbol{e}_3$. Since $\boldsymbol{e}_1, \boldsymbol{e}_2, \boldsymbol{e}_3$ are pairwise non-collinear, their extensions must intersect to form a triangle. Then, according to \cite[Lemma 2.1]{Hu_JCM_2015}, we have 
$$
\mathbb{S}=\textrm{span}\{\boldsymbol{e}_1\otimes \boldsymbol{e}_1,\boldsymbol{e}_2\otimes \boldsymbol{e}_2,\boldsymbol{e}_3\otimes \boldsymbol{e}_3\},
$$ 
which finishes the proof.
\end{proof}

This assumption is necessary because a 2D rectangle or a 2D parallelogram, having only two independent tangential vectors, cannot span the space of 2D symmetric matrices. Symmetric stress elements for 2D rectangles are constructed in \cite{HuSIAM20152}.

\begin{lemma}
Let $K$ be a 3D polyhedron. Then $K$ satisfies Assumption \ref{meshassumption}.
\end{lemma}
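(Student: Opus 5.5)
We need to show that for a 3D polyhedron $K$ with faces $F_i$, the spaces $\mathscr T^{F_i}(\mathbb S)$ together span $\mathbb S$, which has dimension $6$. Each $\mathscr T^{F_i}(\mathbb S)$ is spanned by the three tensors $\sym(\boldsymbol{t}\otimes\boldsymbol{t}')$ with $\boldsymbol{t},\boldsymbol{t}'\in\mathscr T^{F_i}$. Once the span is established, one extracts a basis $\{b_\ell\}_{\ell=1}^6$ from the union of these face bases, which is exactly \eqref{equ_edge_vector_property}.

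The first step is to reduce to a duality statement. A tensor $\boldsymbol{\tau}\in\mathbb S$ is orthogonal (in the Frobenius inner product) to all of $\mathscr T^{F}(\mathbb S)$ if and only if $\boldsymbol{t}^{\intercal}\boldsymbol{\tau}\boldsymbol{t}'=0$ for all $\boldsymbol{t},\boldsymbol{t}'\perp\boldsymbol{n}_F$, that is, $\Pi_F\boldsymbol{\tau}\Pi_F=0$. Using \eqref{eq:decS}, this is equivalent to $\boldsymbol{\tau}\in\mathscr N^F(\mathbb S)$, i.e.
$$\boldsymbol{\tau}=\sym(\boldsymbol{a}\otimes\boldsymbol{n}_F)\quad\text{for some }\boldsymbol{a}\in\mathbb R^3.$$
So it suffices to show that the only $\boldsymbol{\tau}\in\mathbb S$ with $\boldsymbol{t}^{\intercal}\boldsymbol{\tau}\boldsymbol{t}'=0$ for all tangents of every face is $\boldsymbol{\tau}=0$.

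The second step uses the geometry of a polyhedron. Since $K$ is a bounded 3D polyhedron, it has at least four faces, and some four of its normals are such that any three of them are linearly independent. To see this, take a vertex where at least three faces meet with non-coplanar normals $\boldsymbol{n}_1,\boldsymbol{n}_2,\boldsymbol{n}_3$. Since $K$ is bounded, $0$ lies in the interior of the convex hull of the normals (Minkowski), so there is a fourth normal $\boldsymbol{n}_4$ not in any plane spanned by two of the first three; I would make this precise with a short argument. Now fix a face $F$ with normal $\boldsymbol{n}$ and choose tangent vectors along the intersection lines with other faces. If $\boldsymbol{\tau}$ annihilates $\mathscr T^{F_i}(\mathbb S)$ for every $i$, then for every unit vector $\boldsymbol{e}$ that is tangent to two faces $F_i,F_j$, i.e. $\boldsymbol{e}\parallel \boldsymbol{n}_i\times\boldsymbol{n}_j$, we get $\boldsymbol{e}^{\intercal}\boldsymbol{\tau}\boldsymbol{e}=0$.

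With four normals in general position there are six directions $\boldsymbol{e}_{ij}=\boldsymbol{n}_i\times\boldsymbol{n}_j$, which are the edge directions of a tetrahedron with face normals $\boldsymbol{n}_1,\dots,\boldsymbol{n}_4$. By the 3D analogue of \cite[Lemma 2.1]{Hu_JCM_2015}, the six tensors $\boldsymbol{e}_{ij}\otimes\boldsymbol{e}_{ij}$ form a basis of $\mathbb S$. Hence $\boldsymbol{\tau}=0$. Moreover, each $\boldsymbol{e}_{ij}\otimes\boldsymbol{e}_{ij}$ already lies in $\mathscr T^{F_i}(\mathbb S)$, since $\boldsymbol{e}_{ij}\perp\boldsymbol{n}_i$. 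One can therefore take $b_\ell$ to be these six tensors, provided each face basis $\{b_i^m\}$ is chosen to contain them. This is allowed because the assumption only requires $\{b_i^m\}$ to be some spanning set of $\mathscr T^{F_i}(\mathbb S)$: for face $F_1$ the three vectors $\boldsymbol{e}_{12},\boldsymbol{e}_{13},\boldsymbol{e}_{14}$ are pairwise non-collinear in the plane, so their squares span $\mathscr T^{F_1}(\mathbb S)$ by the 2D lemma.

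The main obstacle is justifying the existence of four face normals of $K$ with any three linearly independent, since for a general polyhedron, such as a prism or a cube, many normals are parallel. For the cube, the normals $\pm\boldsymbol{e}_k$ give only three independent directions, so no such quadruple exists. In that case I would instead argue directly: the union over the three planes of the spaces $\mathscr T^{F}(\mathbb S)$ contains $\boldsymbol{e}_k\otimes\boldsymbol{e}_l$ sym-products for all $k,l$, which span $\mathbb S$. So the robust version of the argument is the following: choose three faces with linearly independent normals $\boldsymbol{n}_1,\boldsymbol{n}_2,\boldsymbol{n}_3$, which exist because $K$ is bounded. Show directly that $\mathscr T^{F_1}(\mathbb S)+\mathscr T^{F_2}(\mathbb S)+\mathscr T^{F_3}(\mathbb S)=\mathbb S$. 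Using the dual basis $\{\boldsymbol{n}_i^*\}$, face $F_i$ contains the directions $\boldsymbol{n}_j^*$ for $j\ne i$, so we obtain $\sym(\boldsymbol{n}_j^*\otimes\boldsymbol{n}_k^*)$ for all $j,k$, which is a basis of $\mathbb S$. This three-face argument avoids the general-position issue entirely, and I would adopt it as the main proof.
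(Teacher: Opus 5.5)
Your final three-face argument is correct, and it takes a different route from the paper. The paper argues locally at a vertex. It invokes Descartes/Gauss--Bonnet to select a vertex and works with the incident edge vectors $\boldsymbol{e}_1,\dots,\boldsymbol{e}_n$. It settles $n=3$ by citing the fact that the six edge directions of a non-degenerate tetrahedron give a basis of $\mathbb S$. It settles $n>3$ by showing $\mathbb S=\hat{\mathbb S}\oplus\operatorname{span}\{\boldsymbol{e}_4\otimes\boldsymbol{e}_4\}$, which needs $c_1c_3\neq0$ in $\boldsymbol{e}_4=c_1\boldsymbol{e}_1+c_2\boldsymbol{e}_2+c_3\boldsymbol{e}_3$.

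You instead take any three faces with independent normals and the dual basis $\{\boldsymbol{n}_j^*\}$. Since $\boldsymbol{n}_j^*\perp\boldsymbol{n}_i$ for $j\neq i$, the tensor $\sym(\boldsymbol{n}_j^*\otimes\boldsymbol{n}_k^*)$ lies in $\mathscr T^{F_i}(\mathbb S)$ whenever $i\notin\{j,k\}$, and every pair $j\le k$ is covered. At a vertex where exactly three faces meet, the $\boldsymbol{n}_j^*$ are the edge directions. So your construction there is the paper's $n=3$ case with $\sym(\boldsymbol{e}_i\otimes\boldsymbol{e}_j)$ in place of $(\boldsymbol{e}_i-\boldsymbol{e}_j)\otimes(\boldsymbol{e}_i-\boldsymbol{e}_j)$.

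What each approach buys:
\begin{itemize}
\item The paper's version keeps the $b_\ell$ tied to edge vectors at one vertex, matching the $\boldsymbol{e}_\ell$ notation of the DoF lemma.
\item Yours is self-contained and uniform. It needs no Gauss--Bonnet (whose positive-defect conclusion is never used afterwards anyway), no external tetrahedron lemma, no case split on the valence, and no informally justified claim $c_1c_3\neq0$. The three faces need not even share a vertex.
\item Yours also works verbatim for every $d\geq3$, because a pair $\{j,k\}$ always misses some index $i$. It shows precisely why $d=2$ needs an extra hypothesis: $\sym(\boldsymbol{n}_1^*\otimes\boldsymbol{n}_2^*)$ belongs to no face.
\end{itemize}

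Two things you should write out. First, the existence of three faces with independent normals: ``because $K$ is bounded'' becomes one line. If all face normals were orthogonal to some $\boldsymbol{v}\neq\boldsymbol{0}$, the divergence theorem would give $0=\int_{\partial K}(\boldsymbol{x}\cdot\boldsymbol{v})(\boldsymbol{n}\cdot\boldsymbol{v})\ds=|\boldsymbol{v}|^2|K|>0$, a contradiction. Second, state that on $F_i$ you choose $\{b_i^m\}$ to be the three tensors $\sym(\boldsymbol{n}_j^*\otimes\boldsymbol{n}_k^*)$ with $j,k\neq i$. These form a basis of the three-dimensional $\mathscr T^{F_i}(\mathbb S)$, so \eqref{equ_edge_vector_property} then holds literally. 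The four-normal route and the duality reduction can simply be dropped; you were right that the former fails for the cube.
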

\begin{proof}
For any polyhedron that is homeomorphic to a sphere, equivalently with Euler characteristic $2$, the sum of the angle defects over all vertices equals $4\pi$, as given by Descartes' theorem or, more generally, the Gauss--Bonnet theorem. Since this total defect is positive, there must exist at least one vertex with a positive angle defect. By definition of a polyhedron, each vertex is the intersection of at least three faces, and each face contributes an incident edge, so at least three edges also meet at that vertex. Therefore, any spherical polyhedron necessarily has a vertex where at least three faces and three edges meet and where the angle defect is positive. 

Let $\texttt{v}_0$ be such a vertex of $K$ where $n \geq 3$ faces meet. Let $\{\boldsymbol{e}_1, \boldsymbol{e}_2, \dots, \boldsymbol{e}_n\}$ be the set of edges incident to $\texttt{v}_0$, ordered cyclically. By the topology of a polyhedron, each pair of adjacent edges $\{\boldsymbol{e}_i, \boldsymbol{e}_{i+1}\}$ (with $\boldsymbol{e}_{n+1}=\boldsymbol{e}_1$) defines a boundary face $F_i$.

If $n=3$, the six vectors in $\mathcal{E}_{local}:=\{\boldsymbol{e}_1, \boldsymbol{e}_2, \boldsymbol{e}_3, \boldsymbol{e}_2-\boldsymbol{e}_3, \boldsymbol{e}_3-\boldsymbol{e}_1, \boldsymbol{e}_1-\boldsymbol{e}_2\}$ form a non-degenerate tetrahedron, which spans $\mathbb{S}$ \cite{Hu_JCM_2015}.

For $n>3$, let
\begin{equation*}
\hat{\mathbb S}=\textrm{span}\{\boldsymbol{e}_1\otimes \boldsymbol{e}_1,\boldsymbol{e}_2\otimes \boldsymbol{e}_2,\boldsymbol{e}_3\otimes \boldsymbol{e}_3,\sym(\boldsymbol{e}_1\otimes \boldsymbol{e}_2),\sym(\boldsymbol{e}_2\otimes \boldsymbol{e}_3) \}\subset \cup_{F\in \partial K} \mathscr T^{F}(\mathbb S).
\end{equation*}

We have one more edge $\boldsymbol{e}_4$:
\begin{equation}\label{eq:e4e1-3}
\boldsymbol{e}_4=c_1\boldsymbol{e}_1+c_2\boldsymbol{e}_2+c_3\boldsymbol{e}_3\quad \textrm{with } c_1\neq0, c_3\neq0; 
\end{equation}
otherwise, it will reduce to the case $n=3$.

Next we prove that 
\begin{equation}\label{eq:S3d}
\mathbb S=\hat{\mathbb S}\oplus\textrm{span}\{\boldsymbol{e}_4\otimes \boldsymbol{e}_4\}.
\end{equation}
By \eqref{eq:e4e1-3}, $\boldsymbol{e}_4\otimes \boldsymbol{e}_4 - 2c_1c_3\sym(\boldsymbol{e}_1\otimes \boldsymbol{e}_3)\in\hat{\mathbb S}$. This together with the fact $\mathbb S=\hat{\mathbb S}\oplus\textrm{span}\{\sym(\boldsymbol{e}_1\otimes \boldsymbol{e}_3)\}$
yields \eqref{eq:S3d}.
\end{proof}

We now formally define the DoFs for the local stress space $\Sigma_h^{-1}(K)$.

\begin{lemma}\label{lemma_DOF}
For $K\in\mathcal{K}_h$ and $k\geq1$, the space $\Sigma_h^{-1}(K)$ is uniquely determined by the following DoFs:
\begin{subequations}
\label{SigmaKDoFs}
\begin{align}
\label{SigmaKDoFs1}
\int_{F}\boldsymbol{\sigma}\boldsymbol{n}\cdot \boldsymbol{q}\ds,&\quad \boldsymbol{q}\in   V_{k}(F), F\in \partial K , \\
\label{SigmaKDoFs2}
\int_K\boldsymbol{\sigma}:\boldsymbol{\tau} \dx,& \quad  \boldsymbol{\tau}\in \mathbb{B}_k(\div,K;\mathbb{S})\setminus\Oplus_{\ell=1}^{d(d+1)/2}\left (\chi_{T_{\boldsymbol{e}_\ell}} \mathbb{P}_{k}(T_{\boldsymbol{e}_\ell})b_\ell \right ), \\
\label{SigmaKDoFs3}
\int_K\boldsymbol{\sigma}:\boldsymbol{\tau} \dx,& \quad  \boldsymbol{\tau}\in \mathbb{P}_k(K;\mathbb{S}),
\end{align}
\end{subequations}
where $T_{\boldsymbol{e}_\ell}\in\mathcal{T}_h(K)$ is a sub-element containing the face basis $b_\ell$ chosen in \eqref{equ_edge_vector_property}.
\end{lemma}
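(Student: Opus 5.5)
We prove unisolvence in the standard way: first match the number of DoFs in \eqref{SigmaKDoFs} with $\dim\Sigma_h^{-1}(K)$, then show that any $\boldsymbol{\sigma}\in\Sigma_h^{-1}(K)$ annihilated by all DoFs is zero. We read \eqref{SigmaKDoFs2} as follows. Let $\mathbb{B}'$ be the complement of $\Oplus_{\ell}\chi_{T_{\boldsymbol{e}_\ell}}\mathbb{P}_k(T_{\boldsymbol{e}_\ell})b_\ell$ inside $\mathbb{B}_k(\div,K;\mathbb{S})$. It is obtained from the decomposition \eqref{eq:bubblekKgeodec} by dropping, for each $\ell$, the coefficient of $b_\ell$ in the tangential block of the chosen sub-element. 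The coefficients are taken with respect to the basis $\{b_i^m\}$ of $\mathscr T^{F_i}(\mathbb S)$, and Assumption~\ref{meshassumption} guarantees that the $b_\ell$ are among these basis elements. The dimension count is then
\[
\dim\mathbb{B}' + \dim\mathbb{P}_k(K;\mathbb S) = \dim\mathbb{B}_k(\div,K;\mathbb S) - \tfrac{d(d+1)}{2}\dim\mathbb P_k + \tfrac{d(d+1)}{2}\dim\mathbb P_k ,
\]
since $\dim\mathbb P_k(T)=\dim\mathbb P_k(K)$. Adding the face DoFs \eqref{SigmaKDoFs1} and using \eqref{eq:SigmakKgeodec} and \eqref{eq:SigmaktracepartK}, the total equals $\dim\Sigma_h^{-1}(K)$.

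Now let $\boldsymbol{\sigma}$ annihilate all DoFs. By \eqref{eq:SigmaktracepartK}, the face DoFs \eqref{SigmaKDoFs1} control the full trace, so $\boldsymbol{\sigma}\boldsymbol{n}|_{\partial K}=0$ and $\boldsymbol{\sigma}\in\mathbb{B}_k(\div,K;\mathbb S)$. Write $\boldsymbol{\sigma}=\boldsymbol{\sigma}'+\sum_\ell \chi_{T_{\boldsymbol{e}_\ell}}p_\ell b_\ell$ with $\boldsymbol{\sigma}'\in\mathbb{B}'$ and $p_\ell\in\mathbb P_k(T_{\boldsymbol{e}_\ell})$. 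It remains to show that \eqref{SigmaKDoFs2} and \eqref{SigmaKDoFs3} together force $\boldsymbol{\sigma}=0$. Equivalently, by the dimension count, it suffices to prove the dual statement
\[
\mathbb{B}'\cap\mathbb{P}_k(K;\mathbb S)^{\perp_{\mathbb B}} \text{ determines everything, i.e. }\ \mathbb{B}_k(\div,K;\mathbb S)=\mathbb{B}'\oplus \Pi_{\mathbb B}\mathbb{P}_k(K;\mathbb S),
\]
where $\Pi_{\mathbb B}$ is the $L^2(K)$-projection onto the bubble space. Injectivity on $\mathbb{B}_k$ of the functionals in \eqref{SigmaKDoFs2}–\eqref{SigmaKDoFs3} is then the statement that no nonzero bubble is $L^2$-orthogonal to both $\mathbb{B}'$ and $\mathbb{P}_k(K;\mathbb S)$.

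To prove this, suppose $\boldsymbol{\sigma}\in\mathbb{B}_k$ is orthogonal to $\mathbb{B}'$ and to $\mathbb{P}_k(K;\mathbb S)$. Orthogonality to $\mathbb{B}'$ confines $\boldsymbol{\sigma}$, in each sub-element, to the $L^2$-orthogonal complement of $\mathbb{B}'$ within the bubble. The next step is to identify this complement concretely as a perturbation of $\sum_\ell\chi_{T_{\boldsymbol{e}_\ell}}\mathbb P_k b_\ell$. This is a change of basis: choose dual tensors $b_\ell^*$ on $T_{\boldsymbol{e}_\ell}$ with $b_\ell^*:b_i^m=\delta$ relative to the block, adjusted to kill the $\lambda_{\boldsymbol{x}_K}\sym(\cdot\otimes\boldsymbol{n})$ part. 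Then test orthogonality to $\mathbb P_k(K;\mathbb S)$ with $\boldsymbol{\tau}=q\,b^*$. Because $\{b_\ell\}$ spans $\mathbb S$ by \eqref{equ_assumption_ddpolytope}, any $\boldsymbol{\tau}\in\mathbb P_k(K;\mathbb S)$ can be written as $\sum_\ell q_\ell b_\ell$ in the $b$-frame, with $q_\ell\in\mathbb P_k(K)$ arbitrary. One then wants the resulting $d(d+1)/2$ conditions to force every $p_\ell=0$.

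The main obstacle is this last step. The $p_\ell$ live on different sub-elements $T_{\boldsymbol{e}_\ell}$, while the test polynomials $q_\ell$ are global on $K$, so one must show that the pairing $\sum_\ell\int_{T_{\boldsymbol{e}_\ell}}p_\ell\,q_\ell$-type Gram matrix is nonsingular. The first approach is to arrange that the $T_{\boldsymbol{e}_\ell}$ are associated to distinct $b_\ell$ and that the coupling through other sub-elements vanishes by the orthogonality to $\mathbb{B}'$. The pairing then decouples into blocks $\int_{T_{\boldsymbol{e}_\ell}}p_\ell q\,dx$ with $q\in\mathbb P_k(K)$. Each block is nondegenerate because $\mathbb P_k(K)|_{T}=\mathbb P_k(T)$, and taking $q=p_\ell$, extended polynomially, gives $p_\ell=0$. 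If the blocks do not fully decouple, fall back on a direct argument: the matrix of the conditions is block triangular with these nondegenerate diagonal blocks, in an ordering of the $b_\ell$ induced by \eqref{equ_edge_vector_property}.
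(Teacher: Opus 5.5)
There is a genuine gap. The step you call the main obstacle is never proved, and under your reading of \eqref{SigmaKDoFs2} it is false. You define $\mathbb{B}'$ by dropping coefficients in the face bases $\{b_i^m\}$. This gives an algebraic, non-orthogonal complement of $W:=\Oplus_{\ell}\chi_{T_{\boldsymbol{e}_\ell}}\mathbb{P}_k(T_{\boldsymbol{e}_\ell})b_\ell$. Since $\mathscr N^{F_i}(\mathbb S)\perp\mathscr T^{F_i}(\mathbb S)$ pointwise, $\boldsymbol{\sigma}\perp\mathbb{B}'$ only yields $\boldsymbol{\sigma}|_{T_i}\in\mathbb P_k(T_i)\otimes D_i$. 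Here $D_i$ is the Frobenius-orthogonal complement in $\mathscr T^{F_i}(\mathbb S)$ of the \emph{kept} basis tensors, which in general differs from the span of the dropped $b_\ell$. Testing with $\mathbb P_k(K;\mathbb S)$ then forces $\boldsymbol{\sigma}=\boldsymbol{0}$ if and only if $\sum_i D_i=\mathbb S$, i.e.\ iff your tensors $b_\ell^*$ form a basis of $\mathbb S$. Assumption~\ref{meshassumption} does not guarantee this.

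Concretely, take a cube corner with orthonormal edges $\boldsymbol{e}_1,\boldsymbol{e}_2,\boldsymbol{e}_3$. On the face spanned by $\boldsymbol{e}_1,\boldsymbol{e}_2$ use the basis $\{\boldsymbol{e}_1\otimes\boldsymbol{e}_1,\ \sym(\boldsymbol{e}_1\otimes\boldsymbol{e}_2),\ \boldsymbol{e}_1\otimes\boldsymbol{e}_1+\boldsymbol{e}_2\otimes\boldsymbol{e}_2\}$ and drop the first two; do the same with indices shifted cyclically on the other two faces. The six dropped tensors $\boldsymbol{e}_i\otimes\boldsymbol{e}_i$, $\sym(\boldsymbol{e}_i\otimes\boldsymbol{e}_{i+1})$ span $\mathbb S$, so the assumption holds. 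But every $D_i$, e.g.\ $\mathrm{span}\{\boldsymbol{e}_1\otimes\boldsymbol{e}_1-\boldsymbol{e}_2\otimes\boldsymbol{e}_2,\ \sym(\boldsymbol{e}_1\otimes\boldsymbol{e}_2)\}$, is trace-free. Hence the tests $q\boldsymbol{I}$ carry no information. The $6\dim\mathbb P_k$ conditions \eqref{SigmaKDoFs3} have rank at most $5\dim\mathbb P_k$ on the $6\dim\mathbb P_k$-dimensional space $\Oplus_i\chi_{T_i}\mathbb P_k(T_i)\otimes D_i$, so a nonzero bubble annihilates all DoFs. Neither your decoupling claim nor the block-triangular fallback can therefore be established. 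In 2D the problem is invisible, because each $\mathscr T^{F}(\mathbb S)$ is one-dimensional and the two readings coincide.

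The missing idea is to take the complement in \eqref{SigmaKDoFs2} to be the $L^2(K)$-orthogonal complement of $W$ in $\mathbb{B}_k(\div,K;\mathbb S)$. This is the reading under which the paper's step ``vanishing of \eqref{SigmaKDoFs2} implies $\boldsymbol{\sigma}\in W$'' is valid. With it, you get directly $\boldsymbol{\sigma}=\sum_\ell\chi_{T_{\boldsymbol{e}_\ell}}q_\ell b_\ell$ with $q_\ell\in\mathbb P_k(T_{\boldsymbol{e}_\ell})$. Extend each $q_\ell$ to $\mathbb P_k(K)$ and test \eqref{SigmaKDoFs3} with $\boldsymbol{\tau}=q_\ell\tilde\tau_\ell$, where $\{\tilde\tau_\ell\}$ is the Frobenius dual basis of the basis $\{b_\ell\}$ of $\mathbb S$. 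Since $b_m:\tilde\tau_\ell=\delta_{m\ell}$, every cross term vanishes, even when several $b_\ell$ share a sub-element, and you obtain $\|q_\ell\|_{T_{\boldsymbol{e}_\ell}}^2=0$. No Gram-matrix or ordering argument is needed: the coupling across sub-elements that worried you is removed by the dual basis in $\mathbb S$. Your dimension count and your reduction to bubbles via \eqref{eq:SigmaktracepartK} are fine and match the paper.
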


\begin{proof}
Comparing \eqref{SigmaKDoFs} with \eqref{equ_dofs_dd_polytope}, it suffices to show that the bubble space $\mathbb{B}_k(\div, K; \mathbb{S})$ is uniquely determined by the DoFs \eqref{SigmaKDoFs2}--\eqref{SigmaKDoFs3}. 

The number of DoFs in \eqref{SigmaKDoFs2}--\eqref{SigmaKDoFs3} equals $\dim\mathbb{B}_k(\div ,K;\mathbb{S})$. Let us assume $\boldsymbol{\sigma}\in\mathbb{B}_k(\div ,K;\mathbb{S})$ with all DoFs in \eqref{SigmaKDoFs2}--\eqref{SigmaKDoFs3} vanish. The vanishing of \eqref{SigmaKDoFs2} implies 
$$ \boldsymbol{\sigma}=\sum_{\ell=1}^{d(d+1)/2}\chi_{T_{\boldsymbol{e}_\ell}} q_\ell b_\ell \in\Oplus_{\ell=1}^{d(d+1)/2}\left (\chi_{T_{\boldsymbol{e}_\ell}} \mathbb{P}_{k}(T_{\boldsymbol{e}_\ell})b_\ell \right ),$$
where $q_\ell\in\mathbb{P}_{k}(T_{\boldsymbol{e}_\ell})$. We extend the domain of $q_\ell$ such that $q_\ell\in\mathbb{P}_{k}(K)$. 

Let $\{\tilde{\tau}_\ell\}_{\ell=1}^{d(d+1)/2}$ be the dual basis to $\{b_\ell\}_{\ell=1}^{d(d+1)/2}$. By testing with $\boldsymbol{\tau}=q_\ell\tilde{\tau}_\ell\in \mathbb{P}_k(K;\mathbb{S})$ in \eqref{SigmaKDoFs3}, we obtain
$$ \|q_\ell\|_{T_{\boldsymbol{e}_\ell}}^2=0, \quad \ell=1,\ldots,d(d+1)/2. $$
Thus, $q_\ell=0$ for all $\ell$, which implies $\boldsymbol{\sigma}=\boldsymbol{0}$.
\end{proof}

\subsection{Global finite element spaces}
For $F\in \mathring{\mathcal{F}}_h^{K} \subset \mathring{\mathcal{F}}_h^{T}$, let $T_1$ and $T_2$ be the two elements in $\omega_F$ such that $F=\partial T_1 \cap \partial T_2$. Let $\boldsymbol{u}_h^i = \boldsymbol{u}_h|_{T_i}$ for $i=1,2$. For any $\boldsymbol{u}_h\in L^2(\Omega;\mathbb{R}^d)$, the jump operator across $F$ is defined as
$$
[\boldsymbol{u}_h]_F=\left\{
\begin{array}{ll}
\boldsymbol{u}_h^1-\boldsymbol{u}_h^2,\; & F\in \mathring{\mathcal{F}}_h^{K}, \\
\boldsymbol{u}_h,\; & F\in \partial\Omega.
\end{array}
\right.
$$

Similarly, for any $F \in \mathcal{F}_h^{T}$, let $T_1, T_2 \in \mathcal{T}_h$ be the two elements sharing face $F$, and let $\boldsymbol{n}_i$ be the outward unit normal of $T_i$ on $F$. For a piecewise smooth tensor field $\boldsymbol{\sigma}_h$, the normal jump is
$$
[\boldsymbol{\sigma}_h \boldsymbol{n}]_F=\left\{
\begin{array}{ll}
\boldsymbol{\sigma}_h^1 \boldsymbol{n}_1+\boldsymbol{\sigma}_h^2 \boldsymbol{n}_2, & F\in \mathring{\mathcal{F}}_h^{T}, \\
\boldsymbol{\sigma}_h \boldsymbol{n}_{\partial \Omega},& F\in \partial\Omega,
\end{array}
\right.
$$
where $\boldsymbol{\sigma}_h^{i} = \boldsymbol{\sigma}_h|_{T_i}$.

The approximation space for the displacement $\boldsymbol{u}$ consists of vector-valued piecewise polynomials over the primal mesh $\mathcal{K}_h$:
\begin{equation*}
U_h=\{\boldsymbol{u}_h\in L^2(\Omega;\mathbb{R}^d):\; \boldsymbol{u}_h|_K\in \mathbb{P}_{k+1}(K;\mathbb{R}^d), K\in \mathcal{K}_h\}=\prod_{K\in\mathcal{K}_h}\mathbb{P}_{k+1}(K;\mathbb{R}^d).
\end{equation*}
Functions in $U_h$ are polynomials within each polytope $K$ but may be discontinuous across $\partial K$. Let $Q_{k+1}$ be the $L^2$ projection onto $U_h$. To enforce stability, functions in the stress space $\Sigma_h$ are required to have continuous normal flux across the primal faces. This global stress space is defined as
\begin{equation*}
\Sigma_h = \{\boldsymbol{\sigma}_h\in L^2(\Omega;\mathbb{S}):\; \boldsymbol{\sigma}_h|_T\in \Sigma_k(T),\; T\in \mathcal{T}_h,\,
[\boldsymbol{\sigma}_h\boldsymbol{n}]|_F=0, \;F\in \mathring{\mathcal{F}}_h^{K}\}.
\end{equation*}

\subsection{Finite element spaces of the lowest order}
In the remainder of this section, we extend the approximation spaces to the lowest-order ($k=0$) case. To ensure the well-posedness of the corresponding numerical scheme, we incorporate the space of rigid motions.

The kernel of the strain operator $\boldsymbol{\varepsilon}(\cdot)$ on an element $K$ is the rigid motion space~\cite{Nedelec1980}:
$$
{\rm RM}(K)=\mathbb{P}_0(K; \mathbb{R}^d) \oplus \mathbb{P}_0(K; \mathbb{K})\boldsymbol{x},
$$
where $\boldsymbol{x}$ represents the position vector. For any face $F\in\partial K$, the polynomial space $\mathbb P_k(F)$ can be constructed using the following basis:
\begin{equation*}
\mathbb P_k(F)=\textrm{span}\big\{(\boldsymbol{t}_1^F\cdot\boldsymbol{x})^{\alpha_1}\cdots(\boldsymbol{t}_{d-1}^F\cdot\boldsymbol{x})^{\alpha_{d-1}}: \sum_{i=1}^{d-1}\alpha_i \leq k, \alpha_i\in\mathbb N\big\}.
\end{equation*}
Based on this basis, we naturally extend the domain of polynomials in $\mathbb P_k(F)$ from $F$ to $K$ (or to a sub-element $T\in\mathcal T_h(K)$ sharing $F$) by assuming the function is constant along lines perpendicular to $F$.

The rigid motion space on face $F$ is defined as:
\begin{equation*}
{\rm RM}(F):=\mathbb{P}_0(F; \mathscr T^F) \oplus \mathbb{P}_0(F; \mathscr T^F(\mathbb K)) \boldsymbol{x}.
\end{equation*}
It follows that ${\rm RM}(F)=\Pi_F {\rm RM}(K)$ \cite[Lemma 11]{HuangZhangZhouZhuACM2024}, which allows us to interpret ${\rm RM}(F)$ as being defined on either the face $F$ or the polytope $K$.

\begin{lemma}
For any face $F\in\partial K$, we have the following decomposition:
\begin{equation}\label{eq:RMdecomp}
{\rm RM}(K)|_F=\mathbb{P}_1(F)\boldsymbol{n}_F\oplus {\rm RM}(F).
\end{equation}
\end{lemma}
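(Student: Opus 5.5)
We prove \eqref{eq:RMdecomp} in three steps: show each summand lies in ${\rm RM}(K)|_F$, show the sum is direct, and then conclude by counting dimensions. Throughout we write a point of $F$ as $\boldsymbol{x}=\boldsymbol{x}_0+\boldsymbol{y}$, where $\boldsymbol{x}_0\in F$ is fixed and $\boldsymbol{y}\in\mathscr T^F$. Restricted to $F$, a rigid motion $\boldsymbol{v}=\boldsymbol{a}+\boldsymbol{S}\boldsymbol{x}$ with $\boldsymbol{a}\in\mathbb R^d$ and $\boldsymbol{S}\in\mathbb K$ splits into its normal part $(\boldsymbol{v}\cdot\boldsymbol{n}_F)\boldsymbol{n}_F$ and its tangential part $\Pi_F\boldsymbol{v}$.

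\textbf{Containment.} The tangential part $\Pi_F\boldsymbol{v}$ lies in ${\rm RM}(F)$ by \cite[Lemma 11]{HuangZhangZhouZhuACM2024}. The normal component $\boldsymbol{v}\cdot\boldsymbol{n}_F=\boldsymbol{a}\cdot\boldsymbol{n}_F+\boldsymbol{n}_F^{\intercal}\boldsymbol{S}\boldsymbol{x}$ is affine in $\boldsymbol{x}$, so it lies in $\mathbb P_1(F)$. Hence
$$
{\rm RM}(K)|_F\subseteq \mathbb P_1(F)\boldsymbol{n}_F+{\rm RM}(F).
$$
The sum on the right is direct, because elements of ${\rm RM}(F)$ take values in $\mathscr T^F$ while elements of $\mathbb P_1(F)\boldsymbol{n}_F$ take values in ${\rm span}\{\boldsymbol{n}_F\}$, and these two subspaces intersect trivially.

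\textbf{Reverse inclusion.} We realize each summand separately inside ${\rm RM}(K)|_F$. For the normal part, given $p(\boldsymbol{x})=\alpha+\boldsymbol{\beta}\cdot\boldsymbol{y}$ with $\boldsymbol{\beta}\in\mathscr T^F$, take
$$
\boldsymbol{S}=\boldsymbol{n}_F\otimes\boldsymbol{\beta}-\boldsymbol{\beta}\otimes\boldsymbol{n}_F\in\mathbb K .
$$
For $\boldsymbol{y}\in\mathscr T^F$ this gives $\boldsymbol{S}\boldsymbol{y}=(\boldsymbol{\beta}\cdot\boldsymbol{y})\boldsymbol{n}_F$, so $\boldsymbol{S}$ produces the linear normal part with no tangential contribution on $F$. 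Choosing the constant $\boldsymbol{a}$ to absorb $\boldsymbol{S}\boldsymbol{x}_0$ and $\alpha$ then yields exactly $p\,\boldsymbol{n}_F$ on $F$. For the tangential part, any element of ${\rm RM}(F)$ is $\boldsymbol{c}+\boldsymbol{W}\boldsymbol{x}$ with $\boldsymbol{c}\in\mathscr T^F$ and $\boldsymbol{W}\in\mathscr T^F(\mathbb K)$. Since $\boldsymbol{W}\boldsymbol{n}_F=0$ and $\boldsymbol{W}$ maps into $\mathscr T^F$, this field is itself a rigid motion on $K$ whose restriction to $F$ is purely tangential. The two constructions add, which proves equality.

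\textbf{Dimension check.} As a consistency check, $\dim\mathbb P_1(F)\boldsymbol{n}_F=d$ and $\dim{\rm RM}(F)=(d-1)+\tfrac{(d-1)(d-2)}{2}$. Their sum is $\tfrac{d(d+1)}{2}=\dim{\rm RM}(K)$, so restriction to $F$ is injective on ${\rm RM}(K)$. Injectivity also follows directly: a rigid motion vanishing on a hyperplane vanishes identically. The only delicate point is keeping track of the offset $\boldsymbol{x}_0$ when matching constants; it is avoided by placing the origin on $F$.
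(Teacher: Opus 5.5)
Your proof is correct and follows essentially the same route as the paper. The forward inclusion comes from ${\rm RM}(F)=\Pi_F{\rm RM}(K)$ together with the normal component being affine. The reverse inclusion uses the same skew matrix $\boldsymbol{n}_F\otimes\boldsymbol{t}-\boldsymbol{t}\otimes\boldsymbol{n}_F$ to produce $\mathbb{P}_1(F)\boldsymbol{n}_F$, with a constant vector absorbing the offset; the paper writes that constant as $(\boldsymbol{n}_F\cdot\boldsymbol{x})\boldsymbol{t}_i^F$.

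Your explicit directness argument and the dimension count are extras the paper leaves implicit. The dimension count is redundant once both inclusions are proved, so your opening plan to ``conclude by counting dimensions'' does not describe what your argument actually relies on.
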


\begin{proof}
Given that ${\rm RM}(F)=\Pi_F {\rm RM}(K)$, the inclusion ${\rm RM}(K)|_F\subseteq\mathbb{P}_1(F)\boldsymbol{n}_F\oplus {\rm RM}(F)$ is immediate. To prove the reverse inclusion, first note that
$$
\mathbb{P}_0(F)\boldsymbol{n}_F \oplus {\rm RM}(F) \subseteq {\rm RM}(K)|_F.
$$
For $i=1,\dots,d-1$, since $\boldsymbol{n}_F \cdot \boldsymbol{x}$ is constant on $F$, we have
$$
(\boldsymbol{t}_i^F \cdot \boldsymbol{x}) \boldsymbol{n}_F = \big((\boldsymbol{n}_F \otimes \boldsymbol{t}_i^F - \boldsymbol{t}_i^F \otimes \boldsymbol{n}_F)\boldsymbol{x} + (\boldsymbol{n}_F \cdot \boldsymbol{x}) \boldsymbol{t}_i^F\big)\big|_F \in {\rm RM}(K)|_F.
$$
Hence, the inclusion $\mathbb{P}_1(F)\boldsymbol{n}_F \oplus {\rm RM}(F) \subseteq {\rm RM}(K)|_F$ holds, which completes the proof.
\end{proof}

For $T\in\mathcal{T}_h$, the lowest-order local stress space is defined as
\begin{align*}
\Sigma_0(T)
&= \sym(V_{0}(F)\otimes\boldsymbol{n}_F) \oplus \mathbb{P}_{0}(T;\mathscr T^F(\mathbb S)),
\end{align*}
where $F\in\mathcal{F}_h^{K}\cap \partial T$ and
$$V_{0}(F) := {\rm RM}(K)|_F=\mathbb{P}_1(F) \boldsymbol{n}_F \oplus {\rm RM}(F).$$
We have 
\begin{equation*}
\dim \Sigma_0(T)=d^2,\quad \dim V_{0}(F)=\frac{1}{2}d(d+1).
\end{equation*}
The trace of $\Sigma_0(T)$ onto the face $F\in\mathcal{F}_h^{K}\cap \partial T$ is then given by
\begin{equation*}
\tr_F(\Sigma_0(T))=V_{0}(F). 
\end{equation*}
The space $\Sigma_0(T)$ is an enrichment of $ \mathbb{P}_0(T; \mathbb S)$, designed so that the trace operator defines a semi-norm. Together with the strain operator, this results in a norm that satisfies the piecewise Korn's inequality \eqref{eq:KornineqltypieceH1}.
The enrichment $\Sigma_0(T)$ is divergence free.
This property is inherited from the ${\rm RM}(K)$ space, due to its construction via an extension of the trace ${\rm RM}(K)|_F$.

\begin{lemma}
For $T\in\mathcal{T}_h$, we have
\begin{equation}\label{eq:divSigma0}
\div\Sigma_0(T)=\{0\}.
\end{equation}
\end{lemma}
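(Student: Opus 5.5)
The plan is to check \eqref{eq:divSigma0} directly on each summand of
$\Sigma_0(T)=\sym(V_{0}(F)\otimes\boldsymbol{n}_F)\oplus \mathbb{P}_{0}(T;\mathscr T^F(\mathbb S))$, using the convention fixed earlier that polynomials on $F$ are extended to $T$ as functions constant along lines perpendicular to $F$. The second summand consists of constant tensors, so its divergence vanishes trivially. All the work is therefore in the first summand.

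For the first summand, let $\boldsymbol{v}\in V_0(F)$ be extended to $T$ so that $(\boldsymbol{n}_F\cdot\nabla)\boldsymbol{v}=\boldsymbol{0}$, meaning $\boldsymbol{v}$ depends only on the tangential coordinates $\boldsymbol{t}_i^F\cdot\boldsymbol{x}$. Since $\boldsymbol{n}_F$ is a constant vector, the row-wise divergence gives
\begin{equation*}
\div(\boldsymbol{v}\otimes\boldsymbol{n}_F)=(\boldsymbol{n}_F\cdot\nabla)\boldsymbol{v}=\boldsymbol{0},\qquad
\div(\boldsymbol{n}_F\otimes\boldsymbol{v})=(\div\boldsymbol{v})\,\boldsymbol{n}_F .
\end{equation*}
Hence $\div\sym(\boldsymbol{v}\otimes\boldsymbol{n}_F)=\tfrac12(\div\boldsymbol{v})\boldsymbol{n}_F$, and it suffices to show $\div\boldsymbol{v}=0$ for every $\boldsymbol{v}$ in the extended space $V_0(F)$.

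By \eqref{eq:RMdecomp}, we have $V_0(F)=\mathbb{P}_1(F)\boldsymbol{n}_F\oplus{\rm RM}(F)$, and I will treat the three kinds of generators separately.
\begin{itemize}
\item For $\boldsymbol{v}=p\,\boldsymbol{n}_F$ with $p\in\mathbb P_1(F)$ extended, $\div\boldsymbol{v}=\boldsymbol{n}_F\cdot\nabla p=0$, since $p$ is constant in the normal direction.
\item For $\boldsymbol{v}\in\mathbb P_0(F;\mathscr T^F)$, the field is constant, so $\div\boldsymbol{v}=0$.
\item For $\boldsymbol{v}=W\boldsymbol{x}$ with $W\in\mathscr T^F(\mathbb K)$, we have $W=\sum_{i<j}c_{ij}\,\mathrm{skw}(\boldsymbol{t}_i^F\otimes\boldsymbol{t}_j^F)$, so $W\boldsymbol{x}$ is a combination of $(\boldsymbol{t}_j^F\cdot\boldsymbol{x})\boldsymbol{t}_i^F-(\boldsymbol{t}_i^F\cdot\boldsymbol{x})\boldsymbol{t}_j^F$.
\end{itemize}

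The only point needing care is the third case. There I must confirm that the linear field $W\boldsymbol{x}$ is itself the normal-constant extension of its trace on $F$, so that no ambiguity arises in how ${\rm RM}(F)$ is extended. This holds because $W\boldsymbol{x}$ depends only on the tangential coordinates. Its divergence is then $\tr W=0$ because $W$ is skew. This finishes all three cases, so every element of the first summand is divergence free and \eqref{eq:divSigma0} follows.
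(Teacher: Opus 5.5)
Your proof is correct and is essentially the paper's argument: both reduce to the nonconstant generators $(\boldsymbol{t}_i^F\cdot\boldsymbol{x})\boldsymbol{n}_F$ and $\boldsymbol{\tau}\boldsymbol{x}$ with $\boldsymbol{\tau}\in\mathscr T^F(\mathbb K)$. Both then kill the two terms of $\div\sym(\boldsymbol{v}\otimes\boldsymbol{n}_F)=\tfrac12\big((\boldsymbol{n}_F\cdot\nabla)\boldsymbol{v}+(\div\boldsymbol{v})\boldsymbol{n}_F\big)$ using $\boldsymbol{n}_F\cdot\boldsymbol{t}_i^F=0$, $\boldsymbol{\tau}\boldsymbol{n}_F=\boldsymbol{0}$ and $\div(\boldsymbol{\tau}\boldsymbol{x})=0$; you merely organize this through the single identity rather than computing generator by generator as the paper does.
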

\begin{proof}
It suffices to demonstrate that
\begin{align*}
\div\big(\boldsymbol{n}_F\otimes\boldsymbol{n}_F(\boldsymbol{t}_i^F\cdot\boldsymbol{x})\big)&=0,\quad i=1,\ldots, d-1, \\
\div\sym\big(\boldsymbol{n}_F\otimes (\boldsymbol{\tau}\boldsymbol{x})\big)&=0, \quad \boldsymbol{\tau}\in\mathscr T^F(\mathbb K).
\end{align*}
Using the identity $(\boldsymbol{n}_F\cdot\nabla)\boldsymbol{x}=\boldsymbol{n}_F$, we obtain
\begin{equation*}
\div\big(\boldsymbol{n}_F\otimes\boldsymbol{n}_F(\boldsymbol{t}_i^F\cdot\boldsymbol{x})\big) = \boldsymbol{n}_F(\boldsymbol{n}_F\cdot\nabla)(\boldsymbol{t}_i^F\cdot\boldsymbol{x}) = \boldsymbol{n}_F(\boldsymbol{t}_i^F\cdot\boldsymbol{n}_F)=0.
\end{equation*}
Furthermore, since $\boldsymbol{\tau}\boldsymbol{x}\in {\rm RM}(T)\subseteq \mathbb{P}_1(T; \mathbb{R}^d)\cap\ker(\div)$, it follows that
\begin{equation*}
\div\sym\big(\boldsymbol{n}_F\otimes (\boldsymbol{\tau}\boldsymbol{x})\big) = \frac{1}{2}(\boldsymbol{n}_F\cdot\nabla)(\boldsymbol{\tau}\boldsymbol{x}) = \frac{1}{2}\boldsymbol{\tau}\boldsymbol{n}_F=0.
\end{equation*}
This completes the proof.
\end{proof}

For $K\in\mathcal{K}_h$, we define:
\begin{align*}
\Sigma_h^{-1}(K)&=\Oplus_{i=1}^n\chi_{T_i}\Sigma_0(T_i),\\
\mathbb{B}_0(\div,K;\mathbb{S})&=\{\boldsymbol{\tau}\in \Sigma_h^{-1}(K): \boldsymbol{\tau}\boldsymbol{n}|_{\partial K}=\boldsymbol{0}\}.
\end{align*}
Following the logic of Lemmas \ref{lemma_Geo_Dec} and \ref{lemma_DOF}, we state the following results.

\begin{lemma}
For $K\in\mathcal{K}_h$ and $k=0$, the geometric decompositions of $\Sigma_h^{-1}(K)$ and $\mathbb{B}_0(\div,K;\mathbb{S})$ are
\begin{align*}
\Sigma_h^{-1}(K)&=\mathbb{B}_0(\div,K;\mathbb{S})\oplus\Oplus_{i=1}^n\chi_{T_i}\sym(V_{0}(F_i)\otimes\boldsymbol{n}_i), \\
\mathbb{B}_0(\div,K;\mathbb{S})&=\Oplus_{i=1}^n\chi_{T_i}\mathbb{P}_{0}(T_i;\mathscr T^{F_i}(\mathbb S)).
\end{align*}
The corresponding trace space is
\begin{equation*}
\tr(\Sigma_h^{-1}(K))=\prod_{i=1}^nV_{0}(F_i).
\end{equation*}
\end{lemma}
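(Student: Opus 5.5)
The statement is local to each sub-element, so I would first work on a single pyramid $T=T_i$ with boundary face $F=F_i$ and then sum over $i$. Since $\Sigma_h^{-1}(K)=\Oplus_{i=1}^n\chi_{T_i}\Sigma_0(T_i)$ is a direct sum of functions supported on disjoint pyramids, and the condition $\boldsymbol{\tau}\boldsymbol{n}|_{\partial K}=\boldsymbol{0}$ splits face by face (with $\partial K\cap\partial T_i=F_i$), it suffices to prove two facts for each $i$. The first is that $\tr_{F_i}$ maps $\chi_{T_i}\sym(V_0(F_i)\otimes\boldsymbol{n}_i)$ bijectively onto $V_0(F_i)$. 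The second is that the kernel of $\tr_{F_i}$ on $\Sigma_0(T_i)$ is exactly $\mathbb{P}_0(T_i;\mathscr T^{F_i}(\mathbb S))$.

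\textbf{Trace of the normal part.} For $\boldsymbol{q}\in V_0(F)$ (extended constantly in the normal direction), split $\boldsymbol{q}=\Pi_F\boldsymbol{q}+(\boldsymbol{q}\cdot\boldsymbol{n}_F)\boldsymbol{n}_F$. A direct computation gives
\[
\sym(\boldsymbol{q}\otimes\boldsymbol{n}_F)\boldsymbol{n}_F=\tfrac12\boldsymbol{q}+\tfrac12(\boldsymbol{q}\cdot\boldsymbol{n}_F)\boldsymbol{n}_F=\tfrac12\Pi_F\boldsymbol{q}+(\boldsymbol{q}\cdot\boldsymbol{n}_F)\boldsymbol{n}_F .
\]
By \eqref{eq:RMdecomp}, $V_0(F)=\mathbb{P}_1(F)\boldsymbol{n}_F\oplus{\rm RM}(F)$, where ${\rm RM}(F)$ is tangential. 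The map above just rescales the tangential component by $1/2$ and keeps the normal one, so it is an isomorphism of $V_0(F)$ onto itself. Restricted to $F$, this gives $\tr_F\big(\sym(V_0(F)\otimes\boldsymbol{n}_F)\big)=V_0(F)$ and shows that $\tr_F$ is injective on $\sym(V_0(F)\otimes\boldsymbol{n}_F)$. Because $\tr_F$ is linear and injective here, $\sym(V_0(F)\otimes\boldsymbol{n}_F)$ meets $\ker\tr_F$ only in zero. Moreover, every element of $\mathbb{P}_0(T;\mathscr T^F(\mathbb S))$ has zero trace on $F$, since $\mathscr T^F(\mathbb S)\boldsymbol{n}_F=\boldsymbol{0}$. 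Together these show that the sum $\Sigma_0(T)=\sym(V_0(F)\otimes\boldsymbol{n}_F)\oplus\mathbb{P}_0(T;\mathscr T^F(\mathbb S))$ is indeed direct.

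\textbf{Identifying the bubble.} Take $\boldsymbol{\tau}\in\Sigma_0(T)$ and write $\boldsymbol{\tau}=\boldsymbol{\tau}_1+\boldsymbol{\tau}_2$, with $\boldsymbol{\tau}_1$ in the normal part and $\boldsymbol{\tau}_2$ in the tangential part. Then $\tr_F\boldsymbol{\tau}=\tr_F\boldsymbol{\tau}_1$. This vanishes if and only if $\boldsymbol{\tau}_1=0$ by the injectivity just shown, so $\ker\tr_F|_{\Sigma_0(T)}=\mathbb{P}_0(T;\mathscr T^F(\mathbb S))$. Summing over $i$ yields both the formula for $\mathbb{B}_0(\div,K;\mathbb S)$ and the decomposition of $\Sigma_h^{-1}(K)$. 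The trace of $\Sigma_h^{-1}(K)$ is then the product of the independent face traces, giving $\prod_i V_0(F_i)$.

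\textbf{Main obstacle.} The only delicate point is the isomorphism step. It uses two facts: that $V_0(F)$ splits cleanly into a normal-valued part and a tangential part (this is where \eqref{eq:RMdecomp} enters), and that the extension from $F$ into $T$ is constant in the normal direction, so traces are computed on $F$ without ambiguity. Once these are in hand, a dimension check confirms the result: $\dim\Sigma_0(T)=d^2$, $\dim V_0(F)=\frac{d(d+1)}{2}$, and $\dim\mathscr T^F(\mathbb S)=\frac{d(d-1)}{2}$, which sum correctly.
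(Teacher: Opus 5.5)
Your proposal is correct and follows essentially the same route as the paper. The paper gives no separate proof for $k=0$; it relies, by analogy with the $k\ge1$ case, on splitting each $\Sigma_0(T_i)$ into the tangential--tangential part, which $\tr_{F_i}$ annihilates, and the normal part $\sym(V_0(F_i)\otimes\boldsymbol n_i)$, whose trace is $V_0(F_i)$, and then assembling over the pyramids. Your explicit check that $\boldsymbol q\mapsto\sym(\boldsymbol q\otimes\boldsymbol n_F)\boldsymbol n_F=\tfrac12\Pi_F\boldsymbol q+(\boldsymbol q\cdot\boldsymbol n_F)\boldsymbol n_F$ maps $V_0(F)$ isomorphically onto itself, via \eqref{eq:RMdecomp}, supplies the step the paper only asserts as $\tr_F(\Sigma_0(T))=V_0(F)$.
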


\begin{lemma}
For $K\in\mathcal{K}_h$ satisfying Assumption \ref{meshassumption}. The space $\Sigma_h^{-1}(K)$ for $k=0$ is uniquely determined by the following DoFs:
\begin{subequations}
\label{SigmaKDoFsk0}
\begin{align}
\label{SigmaKDoFs1k0}
\int_{F}\boldsymbol{\sigma}\boldsymbol{n}\cdot \boldsymbol{q}\ds,&\quad \boldsymbol{q}\in V_{0}(F), F\in \partial K, \\
\label{SigmaKDoFs2k0}
\int_K\boldsymbol{\sigma}:\boldsymbol{\tau} \dx,& \quad \boldsymbol{\tau}\in \mathbb{B}_0(\div,K;\mathbb{S})\setminus\Oplus_{\ell=1}^{d(d+1)/2}\left (\chi_{T_{\boldsymbol{e}_\ell}}\mathbb{P}_{0}(T_{\boldsymbol{e}_\ell})b_\ell \right ), \\
\label{SigmaKDoFs3k0}
\int_K\boldsymbol{\sigma}:\boldsymbol{\tau} \dx,& \quad \boldsymbol{\tau}\in \mathbb{P}_0(K;\mathbb{S}),
\end{align}
\end{subequations}
where $T_{\boldsymbol{e}_\ell}\in\mathcal{T}_h(K)$ is a sub-element containing the edge vector $\boldsymbol{e}_\ell$ chosen in \eqref{equ_edge_vector_property}.
\end{lemma}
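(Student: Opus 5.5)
The plan mirrors the proof of Lemma~\ref{lemma_DOF}. By the $k=0$ geometric decomposition and trace characterization stated just above, $\Sigma_h^{-1}(K)=\mathbb{B}_0(\div,K;\mathbb{S})\oplus\Oplus_{i}\chi_{T_i}\sym(V_0(F_i)\otimes\boldsymbol{n}_i)$, and the trace map sends the second summand bijectively onto $\prod_i V_0(F_i)$. Suppose first that the bubble space is unisolvent for \eqref{SigmaKDoFs2k0}--\eqref{SigmaKDoFs3k0}, and let $\boldsymbol{\sigma}\in\Sigma_h^{-1}(K)$ have all DoFs vanish. Since $\tr_{F_i}\boldsymbol{\sigma}\in V_0(F_i)$, taking $\boldsymbol{q}=\tr_{F_i}\boldsymbol{\sigma}$ in \eqref{SigmaKDoFs1k0} gives $\boldsymbol{\sigma}\boldsymbol{n}|_{\partial K}=\boldsymbol{0}$, hence $\boldsymbol{\sigma}\in\mathbb{B}_0(\div,K;\mathbb{S})$. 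The bubble unisolvence then forces $\boldsymbol{\sigma}=\boldsymbol{0}$.

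The main work is to show the bubble space is unisolvent. I will first count dimensions. The number of DoFs in \eqref{SigmaKDoFs2k0} is $\dim\mathbb{B}_0-d(d+1)/2$, where the complement is read as a complementary subspace. Here $\dim\mathbb{B}_0=n\,d(d-1)/2$, and the $d(d+1)/2$ subtracted pieces $\chi_{T_{\boldsymbol{e}_\ell}}\mathbb{P}_0 b_\ell$ are independent because each $b_\ell$ lies in a distinct $\mathscr T^{F_i}(\mathbb S)$-slot on the corresponding $T_{\boldsymbol{e}_\ell}$. Adding the $d(d+1)/2$ DoFs of \eqref{SigmaKDoFs3k0} gives exactly $\dim\mathbb{B}_0$, so the count matches.

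Now let $\boldsymbol{\sigma}\in\mathbb{B}_0$ with \eqref{SigmaKDoFs2k0}--\eqref{SigmaKDoFs3k0} vanishing. Interpreting \eqref{SigmaKDoFs2k0} as testing against the $L^2$-complement, as in Lemma~\ref{lemma_DOF}, the vanishing gives $\boldsymbol{\sigma}=\sum_\ell c_\ell\chi_{T_{\boldsymbol{e}_\ell}}b_\ell$ with constants $c_\ell$. Let $\{\tilde\tau_\ell\}$ be the basis of $\mathbb S$ dual to $\{b_\ell\}$ under the Frobenius product, which exists by Assumption~\ref{meshassumption}. Testing \eqref{SigmaKDoFs3k0} with $\tilde\tau_m$ yields
\[
0=\sum_\ell c_\ell |T_{\boldsymbol{e}_\ell}|\, b_\ell:\tilde\tau_m = c_m|T_{\boldsymbol{e}_m}|,
\]
so every $c_m=0$ and $\boldsymbol{\sigma}=\boldsymbol{0}$. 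Because the degree is $0$, no polynomial extension of the $q_\ell$ is needed.

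The delicate point is the meaning of the set difference in \eqref{SigmaKDoFs2k0}. It must be read as an $L^2$-orthogonal complement within $\mathbb{B}_0$; then the vanishing of those DoFs places $\boldsymbol{\sigma}$ in the span of the selected pieces. This works because the pieces are supported on single pyramids, so the orthogonal decomposition is clean. I also need the chosen $b_\ell$ to sit in distinct pairs $(T_i,\text{basis slot})$, which requires choosing the face bases $b_i^m$ of each $\mathscr T^{F_i}(\mathbb S)$ to contain the selected $b_\ell$.
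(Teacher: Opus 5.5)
Your proposal is correct and is essentially the paper's argument. The paper gives no separate proof for $k=0$; it defers to the logic of Lemma~\ref{lemma_DOF}, which is exactly your route: reduce to the bubble space via the trace DoFs, match dimensions, use the vanishing of \eqref{SigmaKDoFs2k0} to place $\boldsymbol{\sigma}$ in $\Oplus_{\ell}\chi_{T_{\boldsymbol{e}_\ell}}\mathbb{P}_0(T_{\boldsymbol{e}_\ell})b_\ell$, then test \eqref{SigmaKDoFs3k0} with the dual basis $\tilde\tau_m$. Your reading of the set difference as the $L^2$-orthogonal complement within $\mathbb{B}_0(\div,K;\mathbb{S})$ is precisely what that middle step tacitly requires.
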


Based on the decomposition \eqref{eq:RMdecomp}, the DoFs in \eqref{SigmaKDoFs1k0} are equivalent to
\begin{equation*}
\int_{F}\boldsymbol{n}^{\intercal} \boldsymbol{\sigma}\boldsymbol{n}\, q \ds,\quad q\in \mathbb{P}_{1}(F);
\qquad
\int_{F} \Pi_F\boldsymbol{\sigma}\boldsymbol{n} \cdot \boldsymbol{q}\ds,\quad \boldsymbol{q}\in {\rm RM}(F).
\end{equation*}

We extend the global approximation spaces $U_h$ and $\Sigma_h$ to $k=0$:
\begin{align*}
U_h &= \{\boldsymbol{u}_h\in L^2(\Omega;\mathbb{R}^d):\; \boldsymbol{u}_h|_K\in \mathbb{P}_{1}(K;\mathbb{R}^d), K\in \mathcal{K}_h\}=\prod_{K\in\mathcal{K}_h}\mathbb{P}_{1}(K;\mathbb{R}^d), \\
\Sigma_h &= \{\boldsymbol{\sigma}_h\in L^2(\Omega;\mathbb{S}):\; \boldsymbol{\sigma}_h|_T\in \Sigma_0(T),\; T\in \mathcal{T}_h,\,
[\boldsymbol{\sigma}_h\boldsymbol{n}]|_F=0, \;F\in \mathring{\mathcal{F}}_h^{K}\}.
\end{align*}

Fig.~\ref{Fig_p0d2qua} illustrates the DoFs for the quadrilateral meshes for $k = 0$.

\begin{figure}[htbp!]
\label{Fig_p0d2qua}
\subfigure[DoFs for $\boldsymbol{\sigma}$ on a quadrangle with $k=0$.]{
\begin{minipage}[t]{0.45\linewidth}
\centering
\includegraphics*[width=4cm]{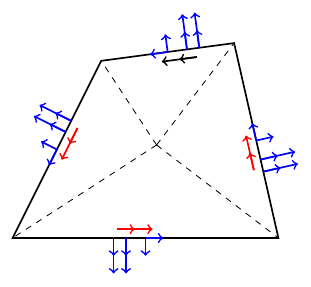}
\end{minipage}}
\subfigure[DoFs for $\boldsymbol{u}$ on a quadrangle with $k=0$.]
{\begin{minipage}[t]{0.45\linewidth}
\centering
\includegraphics*[width=4cm]{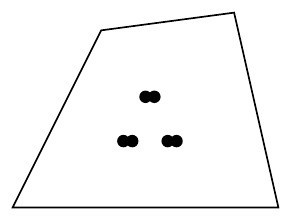}
\end{minipage}}
\caption{ Element diagram for the lowest order $k=0$ stress and displacement on a quadrangle $K$. }
\end{figure}

\subsection{Norm equivalence}
For $k \geq 0$ and face $F\in\mathcal{F}_h$, let $\Pi_k^F$ be the $L^2$-projection operator onto the space $V_k(F)$, defined as:
$$
\Pi_k^F:
\begin{cases}
 L^2(F; \mathbb{R}^d) \rightarrow {\rm RM}(K)|_F, & \textrm{if } k=0, \\
 L^2(F; \mathbb{R}^d) \rightarrow \mathbb{P}_{k}(F; \mathbb{R}^d), & \textrm{if } k\geq 1. 
\end{cases}
$$ 
We introduce the piecewise $H^1$ spaces $H^1(\mathcal{K}_h;\mathbb R^d) := H^1(\mathcal{K}_h)\otimes\mathbb R^d$ and $H^1(\mathcal{T}_h;\mathbb S) := H^1(\mathcal{T}_h)\otimes\mathbb S$, where
\begin{align*}
H^1(\mathcal{K}_h)&:=\{v\in L^2(\Omega): v|_K\in H^1(K) \text{ for each } K\in\mathcal{K}_h\}, \\
H^1(\mathcal{T}_h)&:=\{v\in L^2(\Omega): v|_T\in H^1(T) \text{ for each } T\in\mathcal{T}_h\}.
\end{align*}
The spaces $H^1(\mathcal{K}_h;\mathbb R^d)$ and $H^1(\mathcal{T}_h;\mathbb S)$ are equipped with the following norms, respectively:
\begin{align*}
\|\boldsymbol{v}\|^2_{1,h} &= \sum_{K\in\mathcal{K}_h}\| \boldsymbol{\varepsilon}( \boldsymbol{v})\|_K^2 + \sum_{F\in \mathcal{F}_h^{K}}h_F^{-1}\|\Pi_k^F[\boldsymbol{v}]\|_F^2, \\
\|\boldsymbol{\tau}\|^2_{0,h} &= \|\boldsymbol{\tau}\|^2 + \sum_{F\in \mathcal{F}_h^{K}}h_F\|\boldsymbol{\tau} \boldsymbol{n}\|^2_F.
\end{align*}

\begin{lemma}[Korn's inequality for piecewise $H^1$ vector functions]
The following discrete Korn's inequality holds for any $\boldsymbol{v}\in H^1(\mathcal{K}_h;\mathbb R^d)$:
\begin{equation}\label{eq:KornineqltypieceH1}
\|\boldsymbol{v}\|^2_{1,h} \eqsim \sum_{K\in\mathcal{K}_h}\| \boldsymbol{\varepsilon}( \boldsymbol{v})\|_K^2 + \sum_{F\in \mathcal{F}_h^{K}}h_F^{-1}\|\Pi_0^F[\boldsymbol{v}]\|_F^2 \eqsim \sum_{K\in\mathcal{K}_h}\| \nabla\boldsymbol{v}\|_K^2 + \sum_{F\in \mathcal{F}_h^{K}}h_F^{-1}\|[\boldsymbol{v}]\|_F^2.
\end{equation}
\end{lemma}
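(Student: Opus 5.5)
We prove the chain of equivalences by establishing the easy directions and then the one nontrivial inequality. Write
\[
A(\boldsymbol{v}):=\sum_{K}\|\boldsymbol{\varepsilon}(\boldsymbol{v})\|_K^2+\sum_{F}h_F^{-1}\|\Pi_0^F[\boldsymbol{v}]\|_F^2,\qquad
B(\boldsymbol{v}):=\sum_{K}\|\nabla\boldsymbol{v}\|_K^2+\sum_{F}h_F^{-1}\|[\boldsymbol{v}]\|_F^2 .
\]
The trivial directions are
\[
A(\boldsymbol{v})\le \|\boldsymbol{v}\|_{1,h}^2\le B(\boldsymbol{v}).
\]
The first holds because $V_0(F)={\rm RM}(K)|_F\subseteq \mathbb P_1(F;\mathbb R^d)\subseteq V_k(F)$ for $k\ge1$, so $\|\Pi_0^F w\|_F\le \|\Pi_k^F w\|_F$. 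The second uses $|\boldsymbol{\varepsilon}(\boldsymbol{v})|\le|\nabla \boldsymbol{v}|$ and $\|\Pi_k^F w\|_F\le\|w\|_F$. It therefore suffices to prove $B(\boldsymbol{v})\lesssim A(\boldsymbol{v})$, a piecewise Korn inequality with jumps measured only through the rigid-motion projection.

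The plan is to invoke Brenner's piecewise Korn inequality [Brenner, Math. Comp. 2004], in the form valid on shape-regular polytopal meshes whose elements are star-shaped with bounded chunkiness (e.g. via the simplicial refinement $\mathcal T_h$). It states that for $\boldsymbol{v}\in H^1(\mathcal K_h;\mathbb R^d)$,
\[
|\boldsymbol{v}|_{1,h}^2\lesssim \sum_K\|\boldsymbol{\varepsilon}(\boldsymbol{v})\|_K^2+\sum_F h_F^{-1}\|\Pi_{\mathbb P_1}^F[\boldsymbol{v}]\|_F^2 ,
\]
with the boundary faces supplying the Dirichlet condition that removes global rigid motions. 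To reduce our weaker jump control to this, first decompose elementwise: $\boldsymbol{v}|_K=\boldsymbol{r}_K+\boldsymbol{w}_K$, where $\boldsymbol{r}_K\in{\rm RM}(K)$ is chosen so that $\|\nabla \boldsymbol{w}_K\|_K\lesssim\|\boldsymbol{\varepsilon}(\boldsymbol{v})\|_K$. This is the local Korn inequality on $K$, uniform by the chunkiness assumption, with normalizations $\int_K\boldsymbol{w}_K=0$ and $\int_K{\rm skw}\nabla\boldsymbol{w}_K=0$. The trace inequality then gives $h_F^{-1}\|\boldsymbol{w}_K\|_F^2\lesssim \|\nabla\boldsymbol{w}_K\|_K^2$ via Poincaré.

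Hence on each face
\[
[\boldsymbol{v}]=[\boldsymbol{r}]+[\boldsymbol{w}],\qquad h_F^{-1}\|[\boldsymbol{w}]\|_F^2\lesssim \sum_{K\supset F}\|\boldsymbol{\varepsilon}(\boldsymbol{v})\|_K^2 .
\]
The key observation is that the jump $[\boldsymbol{r}]_F$ of two rigid motions restricted to $F$ lies in ${\rm RM}(K)|_F=V_0(F)$; this is where the decomposition \eqref{eq:RMdecomp} makes this well-defined and independent of the side. Therefore $\Pi_0^F[\boldsymbol{r}]=[\boldsymbol{r}]$, and
\[
\|[\boldsymbol{r}]\|_F\le \|\Pi_0^F[\boldsymbol{v}]\|_F+\|\Pi_0^F[\boldsymbol{w}]\|_F\le \|\Pi_0^F[\boldsymbol{v}]\|_F+\|[\boldsymbol{w}]\|_F .
\]
Combining,
\[
\sum_F h_F^{-1}\|[\boldsymbol{v}]\|_F^2\lesssim A(\boldsymbol{v}),
\]
which controls the full jump term of $B$. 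For $\nabla\boldsymbol{v}$, write $\nabla\boldsymbol{v}|_K=\nabla\boldsymbol{w}_K+\nabla\boldsymbol{r}_K$. It remains to bound the piecewise-constant skew parts $\nabla\boldsymbol{r}_K$, and this is the main step. Apply Brenner's inequality (or equivalently the $H^1$ Korn inequality for piecewise linear/rigid fields) to the piecewise rigid field $\boldsymbol{r}$. Since $\boldsymbol{\varepsilon}(\boldsymbol{r})=0$ elementwise, this gives
\[
\sum_K\|\nabla\boldsymbol{r}\|_K^2\lesssim\sum_F h_F^{-1}\|[\boldsymbol{r}]\|_F^2 ,
\]
and the right-hand side is already bounded by $A(\boldsymbol{v})$.

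I expect the main obstacle to be this last inequality for piecewise rigid motions on general polytopal meshes with the boundary condition. If citing Brenner directly is not acceptable, I would prove it by a compactness-free argument. Map $\boldsymbol{r}$ to a conforming $\mathbb P_1$ field on $\mathcal T_h$ by nodal averaging (an Oswald-type enriching operator $E$), with the estimate $\sum_T\|\nabla(\boldsymbol{r}-E\boldsymbol{r})\|_T^2\lesssim\sum_F h_F^{-1}\|[\boldsymbol{r}]\|_F^2$, established by scaling using shape regularity of $\mathcal T_h$. Then apply the continuous Korn inequality to $E\boldsymbol{r}\in H_0^1$:
\[
\|\nabla E\boldsymbol{r}\|\lesssim\|\boldsymbol{\varepsilon}(E\boldsymbol{r})\|\le\|\boldsymbol{\varepsilon}_h(E\boldsymbol{r}-\boldsymbol{r})\| ,
\]
which closes the bound.
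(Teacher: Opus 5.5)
Your proof is correct and follows essentially the same route as the paper. The elementwise rigid-motion decomposition, using $[\boldsymbol r]_F\in {\rm RM}(K)|_F=V_0(F)$ from \eqref{eq:RMdecomp} (this is the paper's adaptation of Lemma 3.2 of Chen--Hu--Huang), controls the full jumps by the $\Pi_0^F$-projected ones. Brenner's piecewise Korn inequality then controls the broken gradient, and your sandwich $A(\boldsymbol v)\le\|\boldsymbol v\|_{1,h}^2\le B(\boldsymbol v)$ supplies the easy directions that the paper leaves implicit.

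Your fallback, an Oswald-type averaging applied only to the piecewise rigid part, is a nice self-contained substitute for the citation. For $d=3$ with polygonal faces, however, the pyramids in $\mathcal T_h$ must first be subdivided into simplices before nodal averaging makes sense.
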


\begin{proof}
By adapting the proof of Lemma 3.2 in \cite{ChenHuHuang2018} and utilizing the decomposition in \eqref{eq:RMdecomp}, we obtain:
\begin{equation*}
\|\boldsymbol{v}\|^2_{1,h} \eqsim \sum_{K\in\mathcal{K}_h}\| \boldsymbol{\varepsilon}( \boldsymbol{v})\|_K^2 + \sum_{F\in \mathcal{F}_h^{K}}h_F^{-1}\|\Pi_0^F[\boldsymbol{v}]\|_F^2.
\end{equation*}
The full equivalence in \eqref{eq:KornineqltypieceH1} then follows directly from the established Korn's inequalities for piecewise $H^1$ vector functions presented in \cite[(1.22)]{Brenner_MC_2004} and \cite[(34)]{ArnoldBrezziMarini2005}.
\end{proof}

\section{The Staggered Discontinuous Galerkin Scheme}\label{sec:SDG}
This section presents the SDG numerical scheme for the Hellinger-Reissner mixed variational formulation of linear elasticity problems. The well-posedness is established by proving continuity, coercivity, and the inf-sup condition. 

\subsection{Variational formulation}
We propose the primal SDG method based on the first-order system \eqref{equ_elasticity} as follows: for $k\geq0$, find $\boldsymbol{\sigma}_h\in \Sigma_h$ and $\boldsymbol{u}_h\in U_h$ such that
\begin{subequations}\label{elassdg}
\begin{align}
\label{elassdg1}
a_h(\boldsymbol{\sigma}_h,\boldsymbol{\tau}_h)+b_h(\boldsymbol{\tau}_h,\boldsymbol{u}_h)&=-\sum_{T\in\mathcal{T}_h}h_T^2(\boldsymbol{f},\div \boldsymbol{\tau}_h)_T\;\;\forall\,\boldsymbol{\tau}_h\in \Sigma_h,
\\
\label{elassdg2}
b_h(\boldsymbol{\sigma}_h,\boldsymbol{v}_h)\qquad\qquad\qquad&=-(\boldsymbol{f},\boldsymbol{v}_h)\qquad\qquad\qquad\;\,\forall\,\boldsymbol{v}_h\in U_h,
\end{align}
\end{subequations}
where the bilinear forms are
\begin{align*}
a_h(\boldsymbol{\sigma}_h,\boldsymbol{\tau}_h)&:=(\mathcal{A}\boldsymbol{\sigma}_h,\boldsymbol{\tau}_h)
+\sum_{T\in\mathcal{T}_h}h_T^2(\div \boldsymbol{\sigma}_h,\div \boldsymbol{\tau}_h)_T
+\sum_{F\in {\mathcal{F}}_h^{T}\setminus {\mathcal{F}}_h^{K}}h_F\langle [\boldsymbol{\sigma}_h\boldsymbol{n}],[\boldsymbol{\tau}_h\boldsymbol{n}]\rangle_{F},\\
b_h(\boldsymbol{\sigma}_h,\boldsymbol{u}_h)&:=-\sum_{K\in\mathcal{K}_h}(\boldsymbol{\sigma}_h,\boldsymbol{\varepsilon}(\boldsymbol{u}_h))_K+\sum_{F\in\mathcal{F}_h^{K}}\langle \boldsymbol{\sigma}_h \boldsymbol{n},[\boldsymbol{u}_h]\rangle_F.
\end{align*}

The Dirichlet boundary condition $\boldsymbol{u}|_{\partial \Omega} = \boldsymbol{0}$ is imposed weakly in the second term of $b_h(\boldsymbol{\sigma}_h,\boldsymbol{u}_h)$ by $\langle \boldsymbol{\sigma}_h \boldsymbol{n}, \boldsymbol{u}_h\rangle_{\partial \Omega}$.
For $k=0$, by \eqref{eq:divSigma0}, the right hand side of \eqref{elassdg1} vanishes, and the  bilinear form $a_h(\cdot,\cdot)$ can be simplified to
$$
a_h(\boldsymbol{\sigma}_h,\boldsymbol{\tau}_h):=(\mathcal{A}\boldsymbol{\sigma}_h,\boldsymbol{\tau}_h)
+\sum_{F\in {\mathcal{F}}_h^{T}\setminus {\mathcal{F}}_h^{K}}h_F\langle [\boldsymbol{\sigma}_h\boldsymbol{n}],[\boldsymbol{\tau}_h\boldsymbol{n}]\rangle_{F}.
$$

From the Cauchy-Schwarz inequality, the trace inequality and the inverse inequality, we get the following lemma.

\begin{lemma}\label{lemma_conti_ab}
The bilinear forms $a_h(\cdot,\cdot)$ and $b_h(\cdot,\cdot)$ are continuous, i.e. 
\begin{align*}
a_h(\boldsymbol{\sigma}_h,\boldsymbol{\tau}_h)&\lesssim \|\boldsymbol{\sigma}_h\|_{0,h}\|\boldsymbol{\tau}_h\|_{0,h} \quad\forall~\boldsymbol{\sigma}_h,\boldsymbol{\tau}_h\in \Sigma_h + H^1(\Omega;\mathbb S),
\\
b_h(\boldsymbol{\sigma}_h,\boldsymbol{u}_h)&\lesssim \|\boldsymbol{\sigma}_h\|_{0,h}\|\boldsymbol{u}_h\|_{1,h} \quad\forall~\boldsymbol{\sigma}_h\in \Sigma_h + H^1(\Omega;\mathbb S), \boldsymbol{u}_h\in U_h + H^1(\Omega;\mathbb R^d).
\end{align*}
\end{lemma}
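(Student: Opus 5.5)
The plan is to reduce both estimates to element-by-element and face-by-face Cauchy--Schwarz bounds. Each local term will then be controlled by a piece of $\|\cdot\|_{0,h}$ or $\|\cdot\|_{1,h}$, using the trace inequality on the shape-regular refinement $\mathcal T_h$ and the inverse inequality on $\Sigma_k(T)$. Throughout, $h_T\eqsim h_F\eqsim h_K$ for $F\subset\partial T$ and $T\in\mathcal T_h(K)$, by shape regularity and the bounded chunkiness of $K$.

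\emph{Step 1: the compliance term.} From the formula for $\mathcal A$ in terms of $\dev$ and $\tr$, we have $\mathcal A\boldsymbol\tau:\boldsymbol\tau=\frac1{2\mu}|\dev\boldsymbol\tau|^2+\frac{1}{d(2\mu+d\lambda)}|\tr\boldsymbol\tau|^2\le\frac1{2\mu}|\boldsymbol\tau|^2$. Hence $(\mathcal A\boldsymbol\sigma_h,\boldsymbol\tau_h)\le\frac1{2\mu}\|\boldsymbol\sigma_h\|\|\boldsymbol\tau_h\|$, with a constant independent of $\lambda$.

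\emph{Step 2: the divergence and dual-face jump terms.} For the discrete parts we use the inverse inequality $h_T\|\div\boldsymbol\tau\|_T\lesssim\|\boldsymbol\tau\|_T$ for $\boldsymbol\tau\in\Sigma_k(T)$. We also use the discrete trace inequality $h_F^{1/2}\|\boldsymbol\tau\boldsymbol n\|_F\lesssim\|\boldsymbol\tau\|_T$ on every $F\in\mathcal F_h^T\setminus\mathcal F_h^K$. Together these give $h_T\|\div\boldsymbol\tau\|_T+\sum_{F}h_F^{1/2}\|[\boldsymbol\tau\boldsymbol n]\|_F\lesssim\|\boldsymbol\tau\|_{\omega}$ on local patches $\omega$. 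Summing with finite overlap and applying Cauchy--Schwarz, $a_h(\boldsymbol\sigma_h,\boldsymbol\tau_h)\lesssim\|\boldsymbol\sigma_h\|\|\boldsymbol\tau_h\|\le\|\boldsymbol\sigma_h\|_{0,h}\|\boldsymbol\tau_h\|_{0,h}$ for discrete arguments.

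For an $H^1(\Omega;\mathbb S)$ component the dual-face jumps vanish identically, because the normal trace is single valued. So in $\boldsymbol\sigma_h+\boldsymbol\sigma$ only the discrete part contributes to the jump term. For the divergence term, however, the inverse inequality is unavailable. This is the step I expect to be the main obstacle: $\|\cdot\|_{0,h}$ does not in general control $h_T\|\div\boldsymbol\sigma\|_T$ for a generic $H^1$ field. I would handle it by writing any $\boldsymbol\sigma\in\Sigma_h+H^1(\Omega;\mathbb S)$ as $\boldsymbol\sigma=\boldsymbol\sigma_h+\boldsymbol\eta$ with $\boldsymbol\sigma_h\in\Sigma_h$ and $\boldsymbol\eta\in H^1(\Omega;\mathbb S)$, and treating the two pieces separately. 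The discrete piece is bounded via the inverse estimates above. For the smooth piece I would use $h_T\|\div\boldsymbol\eta\|_T\le h_T|\boldsymbol\eta|_{1,T}$. This contribution is only finite, not controlled by $\|\boldsymbol\eta\|_{0,h}$, and I do not see how to absorb it into $\|\cdot\|_{0,h}$ as the lemma asserts. What I can commit to is the following: for the fields to which the lemma is applied in the error analysis, $h_T\|\div\cdot\|_T$ is of the same order as the $L^2$ part. The clean bound stated in the lemma is rigorous when both arguments are in $\Sigma_h$. On the sum space, the divergence term would need separate justification.

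\emph{Step 3: continuity of $b_h$.} By Cauchy--Schwarz, $\sum_K(\boldsymbol\sigma,\boldsymbol\varepsilon(\boldsymbol u))_K\le\|\boldsymbol\sigma\|\big(\sum_K\|\boldsymbol\varepsilon(\boldsymbol u)\|_K^2\big)^{1/2}$. For the face term we split $[\boldsymbol u]=\Pi_k^F[\boldsymbol u]+(I-\Pi_k^F)[\boldsymbol u]$. The first part gives $\langle\boldsymbol\sigma\boldsymbol n,\Pi_k^F[\boldsymbol u]\rangle_F\le h_F^{1/2}\|\boldsymbol\sigma\boldsymbol n\|_F\,h_F^{-1/2}\|\Pi_k^F[\boldsymbol u]\|_F$, which is directly controlled by the two norms. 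For the remainder, $h_F^{-1/2}\|(I-\Pi_k^F)[\boldsymbol u]\|_F\le h_F^{-1/2}\|(I-\Pi_0^F)[\boldsymbol u]\|_F$, since $V_0(F)\subseteq V_k(F)$ by the decomposition \eqref{eq:RMdecomp}. The trace inequality and a Poincar\'e--Korn inequality modulo ${\rm RM}(K)$ on each side of $F$ then bound this by $\|\boldsymbol\varepsilon(\boldsymbol u)\|_{K_1\cup K_2}$, since $\Pi_0^F$ reproduces ${\rm RM}(K)|_F$. Summing over faces and invoking \eqref{eq:KornineqltypieceH1} gives $b_h(\boldsymbol\sigma,\boldsymbol u)\lesssim\|\boldsymbol\sigma\|_{0,h}\|\boldsymbol u\|_{1,h}$ on the full sum spaces, with no inverse inequality needed.
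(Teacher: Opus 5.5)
Your route is the paper's. Its whole proof is the sentence ``Cauchy--Schwarz, trace inequality, inverse inequality,'' and your Steps~1--3 unpack it. Steps~1 and~3 are correct on the full sum spaces, and Step~3 is more careful than the paper. For non-discrete $\boldsymbol\sigma$ the face term $\langle\boldsymbol\sigma\boldsymbol n,[\boldsymbol u]\rangle_F$ no longer collapses onto $\Pi_k^F[\boldsymbol u]$, and your splitting via $V_0(F)\subseteq V_k(F)$ plus Korn's inequality modulo ${\rm RM}(K)$ is exactly what closes it. Equivalently, put $h_F^{-1}\|[\boldsymbol u]\|_F^2$ into Cauchy--Schwarz and use the last equivalence in \eqref{eq:KornineqltypieceH1}. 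Step~2 is correct when both arguments lie in $\Sigma_h$.

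The obstruction you found is genuine, and it is a defect of the statement rather than of your reasoning. The paper's proof has the same blind spot, because the inverse inequality does not apply to $H^1$ fields. With the norm $\|\cdot\|_{0,h}$ of Section~\ref{sec:space}, the estimate for $a_h$ is false on $\Sigma_h+H^1(\Omega;\mathbb S)$.

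For the divergence term, fix $T\in\mathcal T_h$ and a nonzero $\phi\in C_c^\infty(T)$, and let $\boldsymbol\eta_N:=\phi\sin(Nx_1)\,\boldsymbol e_1\otimes\boldsymbol e_1$, extended by zero. All its face traces vanish, so $\|\boldsymbol\eta_N\|_{0,h}\le\|\phi\|_T$, whereas
\begin{equation*}
a_h(\boldsymbol\eta_N,\boldsymbol\eta_N)\ge h_T^2\|\div\boldsymbol\eta_N\|_T^2\ge h_T^2\big(N\|\phi\cos(Nx_1)\|_T-\|\partial_1\phi\|_T\big)^2 .
\end{equation*}
The right-hand side grows like $h_T^2N^2\|\phi\|_T^2/2$.

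Your claim that the jump term is harmless also fails. It is true that only the discrete component contributes to $[\boldsymbol\sigma\boldsymbol n]$ on $\mathcal F_h^T\setminus\mathcal F_h^K$. But that component is not controlled by $\|\cdot\|_{0,h}$ of the sum, since the splitting $\boldsymbol\sigma=\boldsymbol\sigma_h+\boldsymbol\eta$ is neither unique nor stable. Take $T_1\in\mathcal T_h(K)$ with primal face $F_1$, a face $F\in\mathcal F_h^T\setminus\mathcal F_h^K$ of $T_1$, and a unit tangent $\boldsymbol t$ of $F_1$ with $\boldsymbol t\cdot\boldsymbol n_F\neq0$ (possible since $\boldsymbol n_F\nparallel\boldsymbol n_{F_1}$). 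Then $\boldsymbol\sigma_h:=\chi_{T_1}\boldsymbol t\otimes\boldsymbol t\in\Sigma_h$ is divergence-free and has zero normal trace on every primal face. Choose $\boldsymbol\eta_\delta\in C_c^\infty(T_1;\mathbb S)$ with $\|\boldsymbol\sigma_h+\boldsymbol\eta_\delta\|\le\delta$. Then $\|\boldsymbol\sigma_h+\boldsymbol\eta_\delta\|_{0,h}\le\delta$, yet
\begin{equation*}
a_h(\boldsymbol\sigma_h+\boldsymbol\eta_\delta,\boldsymbol\sigma_h)\ge h_F\|(\boldsymbol t\cdot\boldsymbol n_F)\boldsymbol t\|_F^2-\tfrac{1}{2\mu}\delta\|\boldsymbol\sigma_h\| .
\end{equation*}
So the bound fails for every $k\ge0$, even with one argument discrete, and no splitting argument can rescue it.

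What survives is the $a_h$ bound on $\Sigma_h$ (your Step~2) and the $b_h$ bound on the full sum spaces (your Step~3). That is all the stability analysis uses.

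Where more is needed, namely bounding $a_h(\boldsymbol\sigma_I-\boldsymbol\sigma,\boldsymbol\tau_h)$ in the error estimate, your closing remark can be made precise. Since $a_h$ is symmetric positive semidefinite,
\begin{equation*}
a_h(\boldsymbol\sigma_I-\boldsymbol\sigma,\boldsymbol\tau_h)\le a_h(\boldsymbol\sigma-\boldsymbol\sigma_I,\boldsymbol\sigma-\boldsymbol\sigma_I)^{1/2}\,a_h(\boldsymbol\tau_h,\boldsymbol\tau_h)^{1/2}.
\end{equation*}
The second factor is $\lesssim\|\boldsymbol\tau_h\|_{0,h}$ by Step~2. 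The first is $\lesssim h^{k+1}\|\boldsymbol\sigma\|_{k+1}$ once \eqref{eqn_inq_sigma} is supplemented by $\big(\sum_{T}h_T^2|\boldsymbol\sigma-\boldsymbol\sigma_I|_{1,T}^2\big)^{1/2}\lesssim h^{k+1}\|\boldsymbol\sigma\|_{k+1}$. That estimate controls $h_T\|\div(\boldsymbol\sigma-\boldsymbol\sigma_I)\|_T$ and, through the trace inequality, the sub-face jumps.

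Equivalently, state the lemma on the sum space with the stronger norm
\begin{equation*}
\|\boldsymbol\tau\|_{0,h}^2+\sum_T h_T^2\|\div\boldsymbol\tau\|_T^2+\sum_{F\in\mathcal F_h^T\setminus\mathcal F_h^K}h_F\|[\boldsymbol\tau\boldsymbol n]\|_F^2 .
\end{equation*}
The $a_h$ bound is then plain Cauchy--Schwarz. This norm is equivalent to $\|\cdot\|_{0,h}$ on $\Sigma_h$, so the inf-sup and coercivity results are unaffected.
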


\subsection{The well-posedness}
We first verify the inf-sup condition of $b_h(\cdot,\cdot)$. The modification of the interior DoFs for $\Sigma_k$ is to impose this condition.
\begin{lemma} \label{eqn_infsup}
We have the discrete inf-sup condition
\begin{equation}\label{eq:discinfsup}
\|\boldsymbol{u}_h\|_{1,h}\lesssim \sup_{\boldsymbol{\sigma}_h\in\Sigma_h}\frac{b_h(\boldsymbol{\sigma}_h,\boldsymbol{u}_h)}{\|\boldsymbol{\sigma}_h\|_{0,h}}\quad\forall~\boldsymbol{u}_h\in U_h.
\end{equation}
\end{lemma}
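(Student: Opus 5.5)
We prove \eqref{eq:discinfsup} by constructing, for a given $\boldsymbol{u}_h\in U_h$, a stress $\boldsymbol{\sigma}_h\in\Sigma_h$ elementwise through the DoFs of Lemma~\ref{lemma_DOF} (and its $k=0$ analogue \eqref{SigmaKDoFsk0}), such that
\[
b_h(\boldsymbol{\sigma}_h,\boldsymbol{u}_h)=\|\boldsymbol{u}_h\|_{1,h}^2
\quad\text{and}\quad
\|\boldsymbol{\sigma}_h\|_{0,h}\lesssim\|\boldsymbol{u}_h\|_{1,h}.
\]
Since the face DoFs \eqref{SigmaKDoFs1} are single-valued on each primal face $F\in\mathcal F_h^K$, prescribing them face by face automatically yields $[\boldsymbol{\sigma}_h\boldsymbol{n}]_F=0$, so $\boldsymbol{\sigma}_h\in\Sigma_h$. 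We make the following choices:
\[
\int_F\boldsymbol{\sigma}_h\boldsymbol{n}\cdot\boldsymbol{q}\ds=h_F^{-1}\int_F\Pi_k^F[\boldsymbol{u}_h]\cdot\boldsymbol{q}\ds,\qquad \boldsymbol{q}\in V_k(F),
\]
\[
\int_K\boldsymbol{\sigma}_h:\boldsymbol{\tau}\dx=-\int_K\boldsymbol{\varepsilon}(\boldsymbol{u}_h):\boldsymbol{\tau}\dx,\qquad \boldsymbol{\tau}\in\mathbb P_k(K;\mathbb S),
\]
with all remaining interior DoFs in \eqref{SigmaKDoFs2} set to zero. For $k=0$ we have $\Pi_0^F$ onto ${\rm RM}(K)|_F$, and the test space in the second condition is $\mathbb P_0(K;\mathbb S)$.

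The key identity is that on each $K$ we have $\boldsymbol{\varepsilon}(\boldsymbol{u}_h)|_K\in\mathbb P_k(K;\mathbb S)$ because $\boldsymbol{u}_h|_K\in\mathbb P_{k+1}$. The second DoF choice therefore gives
\[
-(\boldsymbol{\sigma}_h,\boldsymbol{\varepsilon}(\boldsymbol{u}_h))_K=\|\boldsymbol{\varepsilon}(\boldsymbol{u}_h)\|_K^2.
\]
For $k=0$, $\boldsymbol{\varepsilon}(\boldsymbol{u}_h)$ is constant, so the same holds. Next, $\boldsymbol{\sigma}_h\boldsymbol{n}|_F\in V_k(F)$ by the trace characterization \eqref{eq:SigmaktracepartK}, so
\[
\langle\boldsymbol{\sigma}_h\boldsymbol{n},[\boldsymbol{u}_h]\rangle_F=\langle\boldsymbol{\sigma}_h\boldsymbol{n},\Pi_k^F[\boldsymbol{u}_h]\rangle_F=h_F^{-1}\|\Pi_k^F[\boldsymbol{u}_h]\|_F^2.
\]
Here the normal orientation must agree with the jump convention; we fix $\boldsymbol{n}_F$ consistently and define $[\cdot]$ with respect to it. Summing over $K$ and $F$ gives $b_h(\boldsymbol{\sigma}_h,\boldsymbol{u}_h)=\|\boldsymbol{u}_h\|_{1,h}^2$ exactly.

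The remaining step, and the main obstacle, is the stability bound $\|\boldsymbol{\sigma}_h\|_{0,h}\lesssim\|\boldsymbol{u}_h\|_{1,h}$ uniformly over polytopes. We argue by scaling and norm equivalence on the finite-dimensional space $\Sigma_h^{-1}(K)$:
\[
\|\boldsymbol{\sigma}\|_K^2+\sum_{F\subset\partial K}h_F\|\boldsymbol{\sigma}\boldsymbol{n}\|_F^2\eqsim\sum_{F\subset\partial K}h_F\|\Pi_k^F\boldsymbol{\sigma}\boldsymbol{n}\|_F^2+\sup_{\boldsymbol{\tau}\in\mathbb P_k(K;\mathbb S)}\frac{(\boldsymbol{\sigma},\boldsymbol{\tau})_K^2}{\|\boldsymbol{\tau}\|_K^2}+(\text{other interior DoFs}),
\]
which rests on the unisolvence in Lemma~\ref{lemma_DOF}. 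The concern is that the constant must not depend on $K$. Shape regularity, the bounded chunkiness, and the bounded number of faces per element place $K$ in a compact family of reference configurations after scaling, so a compactness/continuity argument gives uniform constants. Assumption~\ref{meshassumption} must hold uniformly there, so that the dual basis $\tilde\tau_\ell$ stays bounded. With the DoFs above, the right-hand side is bounded by
\[
\sum_F h_F^{-1}\|\Pi_k^F[\boldsymbol{u}_h]\|_F^2+\|\boldsymbol{\varepsilon}(\boldsymbol{u}_h)\|_K^2,
\]
and summing over $K$ yields $\|\boldsymbol{\sigma}_h\|_{0,h}^2\lesssim\|\boldsymbol{u}_h\|_{1,h}^2$.

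Dividing the identity $b_h(\boldsymbol{\sigma}_h,\boldsymbol{u}_h)=\|\boldsymbol{u}_h\|_{1,h}^2$ by $\|\boldsymbol{\sigma}_h\|_{0,h}\lesssim\|\boldsymbol{u}_h\|_{1,h}$ gives \eqref{eq:discinfsup}. Finally, \eqref{eq:KornineqltypieceH1} confirms that $\|\cdot\|_{1,h}$ is a norm on $U_h$, which the $k=0$ case relies on through ${\rm RM}(K)|_F$.
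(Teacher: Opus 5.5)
Your proposal is correct and is essentially the paper's proof. You define $\boldsymbol{\sigma}_h\in\Sigma_h$ through the DoFs of Lemma~\ref{lemma_DOF}, with face moments $h_F^{-1}\Pi_k^F[\boldsymbol{u}_h]$, $\mathbb{P}_k(K;\mathbb{S})$-moments $-\boldsymbol{\varepsilon}(\boldsymbol{u}_h)$, and the remaining bubble DoFs \eqref{SigmaKDoFs2} set to zero; this gives $b_h(\boldsymbol{\sigma}_h,\boldsymbol{u}_h)=\|\boldsymbol{u}_h\|_{1,h}^2$, and scaling gives $\|\boldsymbol{\sigma}_h\|_{0,h}\lesssim\|\boldsymbol{u}_h\|_{1,h}$. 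Your extra remarks make explicit what the paper leaves implicit: the normal orientation in the jump, the $k=0$ trace space ${\rm RM}(K)|_F$, and the uniformity of the scaling constant, which tacitly requires Assumption~\ref{meshassumption} to hold in a mesh-uniform, quantitative sense.
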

\begin{proof}
Let $\boldsymbol{\sigma}_h\in\Sigma_h$ satisfy
\begin{align}\label{eqn_sigm1}
\langle \boldsymbol{\sigma}_h\boldsymbol{n},\boldsymbol{q} \rangle_F&=\langle h_F^{-1}\Pi_k^F[\boldsymbol{u}_h], \boldsymbol{q} \rangle_F\quad \forall~\boldsymbol{q}\in   V_{k}(F), F\in \mathcal{F}_h^{K}, \\ \label{eqn_sigm2}
(\boldsymbol{\sigma}_h,\boldsymbol{\tau})_K&=-(\boldsymbol{\varepsilon}( \boldsymbol{u}_h),\boldsymbol{\tau})_K \quad\quad\; \forall~\boldsymbol{\tau}\in \mathbb{P}_{k}(K;\mathbb{S}), K\in\mathcal{K}_h,
\end{align}
and DoFs \eqref{SigmaKDoFs2} vanish. Using the scaling argument, we have
\begin{equation*}
\|\boldsymbol{\sigma}_h\|_{0,h}\lesssim \|\boldsymbol{u}_h\|_{1,h}.
\end{equation*}
Based on \eqref{eqn_sigm1}-\eqref{eqn_sigm2} and the definitions of $b_h(\cdot,\cdot)$ and $\|\cdot\|_{1,h}$, we get
\begin{equation*}
\|\boldsymbol{u}_h\|_{1,h}^2= \sum_{K\in\mathcal{K}_h}\|\boldsymbol{\varepsilon}( \boldsymbol{u}_h)\|_K^2
+\sum_{F\in \mathcal{F}_h^{K}}h_F^{-1}\|\Pi_k[\boldsymbol{u}_h]\|_{F}^2=b_h(\boldsymbol{\sigma}_h,\boldsymbol{u}_h).
\end{equation*}
Therefore, the discrete inf-sup condition \eqref{eq:discinfsup} holds.
\end{proof}

Next we show that the bilinear form $a_h(\cdot,\cdot)$ is uniformly coercive on the kernel space
$$
Z_h:=\left\{ \boldsymbol{\sigma}_h\in \Sigma_h: \tr(\boldsymbol{\sigma}_h)\in L_0^2(\Omega), \textrm{ and } b_h(\boldsymbol{\sigma}_h,\boldsymbol{v}_h)=0 \textrm{ for all } \boldsymbol{v}_h\in U_h\right\}.
$$
In Theorem~\ref{thm:elassdg}, we will prove that $\tr(\boldsymbol{\sigma}_h)\in L_0^2(\Omega)$. Therefore, we include the condition $\tr(\boldsymbol{\sigma}_h)\in L_0^2(\Omega)$ in the definition of $Z_h$.

\begin{lemma}\label{lem:trsigmabound}
For any $\boldsymbol{\sigma}_h\in Z_h$, we have
\begin{equation}\label{eq:trsigmabound}
\|\tr(\boldsymbol{\sigma}_h)\|^2\lesssim \|\dev\boldsymbol{\sigma}_h\|^2+\sum_{T\in\mathcal{T}_h} h_T^2\|\div\boldsymbol{\sigma}_h\|_T^2
+\sum_{F\in \mathcal{F}_h^{T} \setminus \mathcal{F}_h^{K}} h_F\|[\boldsymbol{\sigma}_h \boldsymbol{n}]\|_F^2,
\end{equation}
where the constant is independent of the mesh size $h$ and Lam\'{e} constants.
\end{lemma}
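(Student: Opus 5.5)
The plan follows the standard locking-free argument for the trace: test $\tr(\boldsymbol{\sigma}_h)$ against the divergence of a vector field, and use the kernel condition $b_h(\boldsymbol{\sigma}_h,\boldsymbol{v}_h)=0$ to absorb the leading term.

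\textbf{Step 1 (continuous lifting).} Since $\tr(\boldsymbol{\sigma}_h)\in L_0^2(\Omega)$, the stability of the divergence operator gives $\boldsymbol{v}\in H_0^1(\Omega;\mathbb R^d)$ with
\[
\div\boldsymbol{v}=\tr(\boldsymbol{\sigma}_h), \qquad \|\boldsymbol{v}\|_1\lesssim\|\tr(\boldsymbol{\sigma}_h)\|.
\]
Using $\tr(\boldsymbol{\sigma}_h)\boldsymbol{I}=d(\boldsymbol{\sigma}_h-\dev\boldsymbol{\sigma}_h)$, we write
\[
\|\tr(\boldsymbol{\sigma}_h)\|^2=(\tr\boldsymbol{\sigma}_h,\div\boldsymbol{v})=d(\boldsymbol{\sigma}_h,\boldsymbol{\varepsilon}(\boldsymbol{v}))-d(\dev\boldsymbol{\sigma}_h,\boldsymbol{\varepsilon}(\boldsymbol{v})).
\]
The second term is bounded by $\|\dev\boldsymbol{\sigma}_h\|\,\|\tr\boldsymbol{\sigma}_h\|$. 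It therefore remains to control $(\boldsymbol{\sigma}_h,\boldsymbol{\varepsilon}(\boldsymbol{v}))$.

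\textbf{Step 2 (insert the discrete kernel condition).} Let $\boldsymbol{v}_h=Q_{k+1}\boldsymbol{v}\in U_h$, or a Scott--Zhang-type quasi-interpolant on each $K$. By $\boldsymbol{\sigma}_h\in Z_h$ we have $b_h(\boldsymbol{\sigma}_h,\boldsymbol{v}_h)=0$. Since $\boldsymbol{v}$ is continuous and vanishes on $\partial\Omega$, all primal jumps of $\boldsymbol{v}$ vanish, so
\[
-(\boldsymbol{\sigma}_h,\boldsymbol{\varepsilon}(\boldsymbol{v}))=b_h(\boldsymbol{\sigma}_h,\boldsymbol{v})=b_h(\boldsymbol{\sigma}_h,\boldsymbol{v}-\boldsymbol{v}_h).
\]
The obvious estimate via Lemma~\ref{lemma_conti_ab} gives $\|\boldsymbol{\sigma}_h\|_{0,h}\|\boldsymbol{v}-\boldsymbol{v}_h\|_{1,h}$. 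This is useless, because $\|\boldsymbol{v}-\boldsymbol{v}_h\|_{1,h}$ is only $O(\|\boldsymbol{v}\|_1)$ and $\|\boldsymbol{\sigma}_h\|$ contains the trace itself.

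\textbf{Step 3 (integrate by parts on $\mathcal T_h$).} Instead, set $\boldsymbol{w}=\boldsymbol{v}-\boldsymbol{v}_h$ and integrate by parts elementwise on each pyramid $T\in\mathcal T_h(K)$. The term $b_h(\boldsymbol{\sigma}_h,\boldsymbol{w})$ becomes a sum of three pieces:
\begin{itemize}
\item the volume terms $(\div\boldsymbol{\sigma}_h,\boldsymbol{w})_T$;
\item the interior dual-face terms $-\langle[\boldsymbol{\sigma}_h\boldsymbol{n}],\boldsymbol{w}\rangle_F$ for $F\in\mathcal F_h^T\setminus\mathcal F_h^K$, where $\boldsymbol{w}|_K$ is continuous;
\item on primal faces, the term $\langle\boldsymbol{\sigma}_h\boldsymbol{n},[\boldsymbol{w}]\rangle_F$ cancels exactly against the boundary contributions from the integration by parts, thanks to the normal continuity of $\Sigma_h$.
\end{itemize}
For $k=0$ the volume term is absent by \eqref{eq:divSigma0}. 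Cauchy--Schwarz, the approximation bounds $\|\boldsymbol{w}\|_T\lesssim h_T\|\boldsymbol{v}\|_{1,K}$ and the trace bound $\|\boldsymbol{w}\|_F\lesssim h_F^{1/2}\|\boldsymbol{v}\|_{1,K}$, and shape regularity of $\mathcal T_h$ then give
\[
|(\boldsymbol{\sigma}_h,\boldsymbol{\varepsilon}(\boldsymbol{v}))|\lesssim\Big(\sum_T h_T^2\|\div\boldsymbol{\sigma}_h\|_T^2+\sum_{F\in\mathcal F_h^T\setminus\mathcal F_h^K}h_F\|[\boldsymbol{\sigma}_h\boldsymbol{n}]\|_F^2\Big)^{1/2}\|\tr\boldsymbol{\sigma}_h\|.
\]
Combining this with Step 1 and dividing by $\|\tr\boldsymbol{\sigma}_h\|$ yields \eqref{eq:trsigmabound}. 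The constants involve only the inf-sup constant of $\div$, shape regularity and approximation constants, so they are independent of $h$ and $\lambda$.

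\textbf{Main obstacle.} The delicate step is the sign and cancellation bookkeeping in Step 3. One must check that the primal-face terms of $b_h$ cancel exactly with the integration-by-parts boundary terms, including on $\partial\Omega$ where $[\boldsymbol{w}]=\boldsymbol{w}$. The other point to verify is that $\boldsymbol{v}_h$ enjoys the stated local approximation properties on star-shaped polytopes; this holds for the $L^2$ projection onto $\mathbb P_{k+1}(K)$ with $k+1\ge1$ by the Bramble--Hilbert lemma under the bounded chunkiness assumption.
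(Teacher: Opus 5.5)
Your proposal is correct and follows essentially the same route as the paper. Both arguments lift $\tr(\boldsymbol{\sigma}_h)\in L_0^2(\Omega)$ to $\boldsymbol{v}\in H_0^1(\Omega;\mathbb{R}^d)$ through a right inverse of the divergence, use the kernel condition to replace $\boldsymbol{v}$ by $\boldsymbol{v}-Q_{k+1}\boldsymbol{v}$, integrate by parts on $\mathcal{T}_h$ so that only volume terms and jumps on $\mathcal{F}_h^T\setminus\mathcal{F}_h^K$ remain, and finish with Cauchy--Schwarz and the approximation and trace estimates; the differences are cosmetic (the sign convention for $\div\boldsymbol{v}$, and your more explicit account of the primal-face cancellation coming from normal continuity).
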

\begin{proof}
Since $\tr(\boldsymbol{\sigma}_h)\in L_0^2(\Omega)$, we have a $\boldsymbol{v}\in H^1_0(\Omega;\mathbb{R}^d)$ satisfying
\begin{equation}\label{eq:202411131}
-\div\boldsymbol{v}=\tr(\boldsymbol{\sigma}_h)\quad \textrm{ and } \quad \|\boldsymbol{v}\|_1 \lesssim \|\tr(\boldsymbol{\sigma}_h)\|.
\end{equation}
Let $\boldsymbol{v}_h= Q_{k+1}\boldsymbol{v}\in U_h$.
Noting that $\boldsymbol{\sigma}_h\in Z_h$, we obtain from the integration by parts and the Cauchy-Schwarz inequality that
\begin{align*}
\frac{1}{d}\|\tr(\boldsymbol{\sigma}_h)\|^2_0&= -\frac{1}{d}(\tr(\boldsymbol{\sigma}_h),\div  \boldsymbol{v}) 
= -\frac{1}{d}(\tr(\boldsymbol{\sigma}_h)\boldsymbol{I}, \boldsymbol{\varepsilon}(\boldsymbol{v})) 
= (\dev\boldsymbol{\sigma}_h-\boldsymbol{\sigma}_h, \boldsymbol{\varepsilon}(\boldsymbol{v})) 
\\
&= (\dev\boldsymbol{\sigma}_h, \boldsymbol{\varepsilon}(\boldsymbol{v})) + b_h(\boldsymbol{\sigma}_h,\boldsymbol{v}) = (\dev\boldsymbol{\sigma}_h, \boldsymbol{\varepsilon}(\boldsymbol{v})) + b_h(\boldsymbol{\sigma}_h,\boldsymbol{v}-\boldsymbol{v}_h)
\\
&= (\dev\boldsymbol{\sigma}_h, \boldsymbol{\varepsilon}(\boldsymbol{v}))
+\sum_{T\in\mathcal{T}_h}(\div \boldsymbol{\sigma}_h, \boldsymbol{v}- \boldsymbol{v}_h)_{T}
-\sum_{F\in \mathcal{F}_h^{T} \setminus \mathcal{F}_h^{K}}\langle  [\boldsymbol{\sigma}_h \boldsymbol{n}],\boldsymbol{v}-\boldsymbol{v}_h\rangle_F 
\\
& \lesssim \|\dev\boldsymbol{\sigma}_h\|\|\boldsymbol{v}\|_1
+ \sum_{T\in\mathcal{T}_h} \|\div \boldsymbol{\sigma}_h\|_T\|\boldsymbol{v}- \boldsymbol{v}_h\|_T
\\
&\quad + \sum_{F\in \mathcal{F}_h^{T} \setminus \mathcal{F}_h^{K}} \|[\boldsymbol{\sigma}_h \boldsymbol{n}]\|_F\|\boldsymbol{v}- \boldsymbol{v}_h\|_F.
\end{align*}
This combined with the error estimate of $Q_{k+1}$ and \eqref{eq:202411131} yields \eqref{eq:trsigmabound}.
\end{proof}

This lemma shows that the trace of the stress can be controlled by the deviatoric part together with appropriate stabilization terms. The subsequent corollary will demonstrate that the coercivity of the numerical scheme is independent of the Lam\'{e} constants, implying the robustness of the proposed scheme.

\begin{corollary}\label{Corol_coercive}
The following coercivity condition holds, 
\begin{equation}\label{eq:ahcoercive}
\|\boldsymbol{\sigma}_h\|_{0,h}^2\lesssim a_h(\boldsymbol{\sigma}_h,\boldsymbol{\sigma}_h)\quad\forall~\boldsymbol{\sigma}_h\in Z_h,
\end{equation}
with a constant independent of the mesh size $h$ and the Lam\'e constants.
\end{corollary}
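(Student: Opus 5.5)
We need to bound $\|\boldsymbol{\sigma}_h\|_{0,h}^2 = \|\boldsymbol{\sigma}_h\|^2 + \sum_{F\in\mathcal F_h^K} h_F\|\boldsymbol{\sigma}_h\boldsymbol{n}\|_F^2$ by $a_h(\boldsymbol{\sigma}_h,\boldsymbol{\sigma}_h)$, uniformly in $\lambda$. The plan has three steps.

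\textbf{Step 1: lower bound for $a_h$.} Using the expression of $\mathcal A$ via $\dev$ and $\tr$, and the orthogonality $(\dev\boldsymbol{\sigma}_h,\tr(\boldsymbol{\sigma}_h)\boldsymbol{I})=0$, we get
\begin{equation*}
a_h(\boldsymbol{\sigma}_h,\boldsymbol{\sigma}_h)\ge \frac{1}{2\mu}\|\dev\boldsymbol{\sigma}_h\|^2+\sum_{T\in\mathcal{T}_h}h_T^2\|\div\boldsymbol{\sigma}_h\|_T^2+\sum_{F\in\mathcal{F}_h^T\setminus\mathcal{F}_h^K}h_F\|[\boldsymbol{\sigma}_h\boldsymbol{n}]\|_F^2 .
\end{equation*}
Here we simply drop the nonnegative trace term $\frac{1}{d(2\mu+d\lambda)}\|\tr\boldsymbol{\sigma}_h\|^2$. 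Since $\mu$ is fixed, the right-hand side controls the three quantities appearing in Lemma~\ref{lem:trsigmabound}, with no dependence on $\lambda$.

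\textbf{Step 2: control of $\|\boldsymbol{\sigma}_h\|$.} Decompose $\|\boldsymbol{\sigma}_h\|^2=\|\dev\boldsymbol{\sigma}_h\|^2+\frac1d\|\tr\boldsymbol{\sigma}_h\|^2$. Since $\boldsymbol{\sigma}_h\in Z_h$, Lemma~\ref{lem:trsigmabound} bounds $\|\tr\boldsymbol{\sigma}_h\|^2$ by the three quantities from Step~1. Hence $\|\boldsymbol{\sigma}_h\|^2\lesssim a_h(\boldsymbol{\sigma}_h,\boldsymbol{\sigma}_h)$, with a constant independent of $h$ and $\lambda$.

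\textbf{Step 3: the face term.} It remains to bound $\sum_{F\in\mathcal F_h^K}h_F\|\boldsymbol{\sigma}_h\boldsymbol{n}\|_F^2$. On each primal face $F$ and sub-element $T$ with $F\subset\partial T$, the function $\boldsymbol{\sigma}_h|_T$ is a polynomial: it lies in $\mathbb P_k(T;\mathbb S)$, or for $k=0$ in $\Sigma_0(T)\subset\mathbb P_1$. The discrete trace inequality on the shape-regular $\mathcal T_h$ therefore gives $h_F\|\boldsymbol{\sigma}_h\boldsymbol{n}\|_F^2\lesssim \|\boldsymbol{\sigma}_h\|_T^2$. Summing over faces, using that each $T$ touches boundedly many primal faces, yields $\sum_F h_F\|\boldsymbol{\sigma}_h\boldsymbol{n}\|_F^2\lesssim\|\boldsymbol{\sigma}_h\|^2$. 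Combined with Step~2, this proves \eqref{eq:ahcoercive}.

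\textbf{Main difficulty.} The real work was the $\lambda$-robust trace control, and that is already done in Lemma~\ref{lem:trsigmabound}. The remaining delicate point is only to make sure the trace inequality constant depends solely on shape regularity of $\mathcal T_h$. This follows from the star-shaped, bounded-chunkiness assumption on the elements $K$.
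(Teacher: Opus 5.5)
Your proposal is correct and follows the paper's proof essentially step for step. You split $(\mathcal A\boldsymbol{\sigma}_h,\boldsymbol{\sigma}_h)$ into deviatoric and trace parts, use Lemma~\ref{lem:trsigmabound} to control $\|\tr(\boldsymbol{\sigma}_h)\|$ on $Z_h$, and bound the primal-face term by the discrete trace/inverse inequality on $\mathcal T_h$; as in the paper, the constant is robust in $\lambda$ but depends on $\mu$ through $\|\dev\boldsymbol{\sigma}_h\|^2\le 2\mu(\mathcal A\boldsymbol{\sigma}_h,\boldsymbol{\sigma}_h)$, which you correctly flag by treating $\mu$ as fixed.
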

\begin{proof}
After a simple rearrangement, we get
\begin{eqnarray}\label{eq:lowerBoudA}
(\mathcal{A}\boldsymbol{\sigma}_h,\boldsymbol{\sigma}_h)=\frac{1}{2\mu}\|\dev\boldsymbol{\sigma}_h\|^2 +\frac{1}{d(2\mu+d\lambda)}\|\tr(\boldsymbol{\sigma}_h)\|^2.
\end{eqnarray}
From the conclusion given in Lemma \ref{lem:trsigmabound}, we get
\begin{eqnarray*}
\displaystyle
\|\boldsymbol{\sigma}_h\|^2&=& \|\dev\boldsymbol{\sigma}_h\|^2+\frac{1}{d}\|\tr(\boldsymbol{\sigma}_h)\|^2
\\  [0.08in]
&\lesssim& \|\dev\boldsymbol{\sigma}_h\|^2+\sum_{T\in\mathcal{T}_h} h_T^2 \|\div \boldsymbol{\sigma}_h\|_T^2
+\sum_{F\in \mathcal{F}_h^{T} \setminus \mathcal{F}_h^{K}} h_F\|[\boldsymbol{\sigma}_h \boldsymbol{n}]\|_F^2\lesssim a_h(\boldsymbol{\sigma}_h,\boldsymbol{\sigma}_h).
\end{eqnarray*}
Then \eqref{eq:ahcoercive} holds from the trace inequality and inverse inequality.
\end{proof}

As shown by \eqref{eq:lowerBoudA}, the coercivity of the bilinear form $(\mathcal{A}\boldsymbol{\sigma}_h, \boldsymbol{\sigma}_h)$ is lost in the limit $\lambda \to 0$. Hence, to achieve a stable numerical scheme, a stabilization term
\begin{align*}
\sum_{T\in\mathcal{T}_h}h_T^2(\div \boldsymbol{\sigma}_h,\div \boldsymbol{\tau}_h)_T
+\sum_{F\in {\mathcal{F}}_h^{T}\setminus {\mathcal{F}}_h^{K}}h_F\langle [\boldsymbol{\sigma}_h\boldsymbol{n}],[\boldsymbol{\tau}_h\boldsymbol{n}]\rangle_{F}
\end{align*}
 is added to the bilinear form $a_h(\boldsymbol{\sigma}_h,\boldsymbol{\tau}_h)$. We stress that this stabilizer is specifically designed to prevent numerical locking, and not to handle any continuity issues arising from nonconforming elements. This differs from virtual element methods \cite{BeiraodaVeigaBrezziMarini2013} and weak Galerkin finite element methods \cite{WangWangWangZhang2016}, where the purpose of introducing a stabilizer is to obtain a stable bilinear form, and therefore the definition of their stabilizer depends on the choice of the approximation function space.

\begin{theorem}\label{thm:elassdg}
The SDG method \eqref{elassdg} is well-posed and stable. There exists a unique solution pair $(\boldsymbol{\sigma}_h, \boldsymbol{u}_h)\in \Sigma_h\times U_h$ satisfying $\tr(\boldsymbol{\sigma}_h)\in L_0^2(\Omega)$ and
$$
\|\boldsymbol{\sigma}_h\|_{0,h}+\|\boldsymbol{u}_h\|_{1,h}\lesssim \|\boldsymbol{f}\|.
$$
\end{theorem}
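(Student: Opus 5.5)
The plan is to apply the Babuška–Brezzi theory for saddle-point problems. The continuity of $a_h$ and $b_h$ is Lemma \ref{lemma_conti_ab}, the inf-sup condition is Lemma \ref{eqn_infsup}, and coercivity on the kernel is Corollary \ref{Corol_coercive}. The only subtlety is that $Z_h$ contains the extra constraint $\tr(\boldsymbol{\sigma}_h)\in L_0^2(\Omega)$. So the first step is to handle the one-dimensional direction spanned by $\boldsymbol{I}$ separately.

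\textbf{Step 1: the constant $\boldsymbol{I}$.} Note that $\boldsymbol{I}\in\Sigma_h$: it is piecewise constant, so for $k=0$ it lies in $\mathscr T^F(\mathbb S)\oplus\mathscr N^F(\mathbb S)$ with $V_0(F)\ni\boldsymbol{n}_F$. Its normal components are continuous, $\div\boldsymbol{I}=0$, and all jumps $[\boldsymbol{I}\boldsymbol{n}]$ vanish on $\mathcal F_h^T\setminus\mathcal F_h^K$. Moreover, by elementwise integration by parts with $\boldsymbol{v}_h\in U_h$,
\[
b_h(\boldsymbol{I},\boldsymbol{v}_h)=-\sum_{K}(\div\boldsymbol{v}_h,1)_K+\sum_{F\in\mathcal F_h^K}\langle\boldsymbol{n},[\boldsymbol{v}_h]\rangle_F=0 ,
\]
since the face terms exactly cancel the boundary contributions of each $K$.

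Testing \eqref{elassdg1} with $\boldsymbol{\tau}_h=\boldsymbol{I}$ then gives $(\mathcal A\boldsymbol{\sigma}_h,\boldsymbol{I})=0$, and the right-hand side vanishes because $\div\boldsymbol{I}=0$. Since
\[
(\mathcal A\boldsymbol{\sigma}_h,\boldsymbol{I})=\frac{1}{2\mu+d\lambda}\int_\Omega\tr\boldsymbol{\sigma}_h ,
\]
any solution automatically satisfies $\tr(\boldsymbol{\sigma}_h)\in L^2_0(\Omega)$.

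\textbf{Step 2: reduce to the constrained space.} Define $\Sigma_h^0:=\{\boldsymbol{\tau}_h\in\Sigma_h:\tr\boldsymbol{\tau}_h\in L^2_0(\Omega)\}$. Then $\Sigma_h=\Sigma_h^0\oplus\mathrm{span}\{\boldsymbol{I}\}$, and the problem restricted to $\Sigma_h^0\times U_h$ is equivalent to the original one. Indeed, the $\boldsymbol{I}$-component of any test function contributes $0=0$ in both equations: by Step 1 in \eqref{elassdg1}, and since $b_h(\boldsymbol{I},\cdot)=0$ also $b_h(\boldsymbol{\sigma}_h,\cdot)$ is unaffected.

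The inf-sup condition of Lemma \ref{eqn_infsup} transfers to $\Sigma_h^0$. Given the constructed $\boldsymbol{\sigma}_h$, replace it by $\boldsymbol{\sigma}_h-c\boldsymbol{I}$ with $c$ the mean of $\tr\boldsymbol{\sigma}_h/d$. This leaves $b_h$ unchanged, and $\|c\boldsymbol{I}\|_{0,h}\lesssim\|\boldsymbol{\sigma}_h\|$ by Cauchy–Schwarz together with the trace inequality on the finite-dimensional space of constants. The kernel of $b_h$ in $\Sigma_h^0$ is exactly $Z_h$, on which Corollary \ref{Corol_coercive} gives coercivity uniformly in $\lambda$.

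\textbf{Step 3: Brezzi theory and the data bound.} Brezzi's theorem now yields existence, uniqueness, and
\[
\|\boldsymbol{\sigma}_h\|_{0,h}+\|\boldsymbol{u}_h\|_{1,h}\lesssim \sup_{\boldsymbol{\tau}_h}\frac{\sum_T h_T^2(\boldsymbol{f},\div\boldsymbol{\tau}_h)_T}{\|\boldsymbol{\tau}_h\|_{0,h}}+\sup_{\boldsymbol{v}_h}\frac{(\boldsymbol{f},\boldsymbol{v}_h)}{\|\boldsymbol{v}_h\|_{1,h}} .
\]
For the first term, the inverse inequality $h_T\|\div\boldsymbol{\tau}_h\|_T\lesssim\|\boldsymbol{\tau}_h\|_T$ bounds it by $h\|\boldsymbol{f}\|$. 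For the second term we need the discrete Poincaré inequality $\|\boldsymbol{v}_h\|\lesssim\|\boldsymbol{v}_h\|_{1,h}$ on $U_h$. This follows from the broken Korn equivalence \eqref{eq:KornineqltypieceH1} combined with the known piecewise Poincaré–Friedrichs inequality for piecewise $H^1$ functions with the boundary jump included (Brenner). Since all constants involved are independent of $\lambda$, the final bound is robust.

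The main obstacle is Step 1–2: correctly verifying that $\boldsymbol{I}\in\Sigma_h$ lies in the kernel of $b_h$, also for $k=0$, and that removing it does not destroy the inf-sup constant. The Poincaré step is standard given \eqref{eq:KornineqltypieceH1}.
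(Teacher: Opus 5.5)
Your proof is correct and follows the same route as the paper. Testing \eqref{elassdg1} with $\boldsymbol{I}$ forces $\tr(\boldsymbol{\sigma}_h)\in L_0^2(\Omega)$, and then the kernel coercivity of Corollary~\ref{Corol_coercive} together with the inf-sup condition of Lemma~\ref{eqn_infsup} gives well-posedness and stability in the Brezzi framework; your explicit checks that $\boldsymbol{I}\in\Sigma_h$ with $b_h(\boldsymbol{I},\cdot)=0$ (also for $k=0$), the transfer of the inf-sup condition to the mean-zero-trace subspace, and the bounds on the load functionals via the inverse inequality and the broken Poincar\'e--Friedrichs inequality fill in steps that the paper's short proof leaves implicit.
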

\begin{proof}
We first show the well-posedness. Assume $\boldsymbol{f}=\boldsymbol{0}$.
Testing \eqref{elassdg1} with $\boldsymbol{\tau}_h=\boldsymbol{I}$ gives $\tr(\boldsymbol{\sigma}_h)\in L_0^2(\Omega)$.
From \eqref{elassdg2} we have $\boldsymbol{\sigma}_h\in Z_h$.
Taking $\boldsymbol{\tau}_h=\boldsymbol{\sigma}_h$ in \eqref{elassdg1} yields
\[
a_h(\boldsymbol{\sigma}_h,\boldsymbol{\sigma}_h)=0,
\]
and the discrete coercivity \eqref{eq:ahcoercive} then implies $\boldsymbol{\sigma}_h=\boldsymbol{0}$.
With this, \eqref{elassdg1} reduces to
\[
b_h(\boldsymbol{\tau}_h,\boldsymbol{u}_h)=0 \qquad \forall~\boldsymbol{\tau}_h\in\Sigma_h.
\]
By the discrete inf-sup condition \eqref{eq:discinfsup}, we conclude $\boldsymbol{u}_h=\boldsymbol{0}$.
Therefore, the SDG method \eqref{elassdg} is well-posed.

For a general $\boldsymbol{f}$, testing \eqref{elassdg1} with $\boldsymbol{\tau}_h=\boldsymbol{I}$ again shows $\tr(\boldsymbol{\sigma}_h)\in L_0^2(\Omega)$.
The stability estimate follows by combining the discrete inf-sup condition \eqref{eq:discinfsup} with the discrete coercivity \eqref{eq:ahcoercive}.
\end{proof}

\section{Hybridization}\label{sec:hybrid}
In this section, we will present the hybridization of the SDG method \eqref{elassdg}. If we eliminate the stress variable, we obtain a primal formulation. 

\subsection{Spaces and weak differential operators}
Let
$$
M_h=\{\boldsymbol{u}_h=\{\boldsymbol{u}_0,\boldsymbol{u}_b\}: \; \boldsymbol{u}_0|_K\in \mathbb{P}_{k+1}(K;\mathbb{R}^d), \boldsymbol{u}_b|_F\in   V_{k}(F), \; K\in\mathcal{K}_h, \; F\in \mathcal{F}_h^{K}\},
$$
and
$$
\Sigma^{-1}_h=\{\boldsymbol{\sigma}_h\in L^2(\Omega;\mathbb{S}):\; \boldsymbol{\sigma}_h|_T\in \Sigma_k(T), T\in \mathcal{T}_h\}.
$$
Let $M_h^0$ be a subspace of $M_h$ with vanishing boundary values on $\partial \Omega$, that is
$$
M_h^0=\{\boldsymbol{u}_h=\{\boldsymbol{u}_0,\boldsymbol{u}_b\}\in M_h: \;  \boldsymbol{u}_b|_F=\boldsymbol{0}, F\in \partial \Omega\}.
$$

Define a weak symmetric gradient operator
$
\boldsymbol{\varepsilon}_w: M_h \rightarrow \Sigma^{-1}_h
$
as follows: for $\boldsymbol{u}_h\in M_h$, $\boldsymbol{\varepsilon}_w(\boldsymbol{u}_h)|_K=\boldsymbol{\varepsilon}_{w,K}(\boldsymbol{u}_h)$ for each $K\in\mathcal{K}_h$, where
\begin{equation*}
(\boldsymbol{\varepsilon}_{w,K}(\boldsymbol{u}_h), \boldsymbol{\tau})_K=(\boldsymbol{\varepsilon}(\boldsymbol{u}_0), \boldsymbol{\tau})_K+\langle \boldsymbol{u}_b-\boldsymbol{u}_0, \boldsymbol{\tau}\boldsymbol{n}\rangle_{\partial K}\quad \forall~\boldsymbol{\tau}\in\Sigma_h^{-1}(K).
\end{equation*}
Unlike the weak Galerkin method \cite{WangWangWangZhang2016}, the local space to compute $\boldsymbol{\varepsilon}_w(\boldsymbol{u}_h)$ is enriched from a single polynomial space $\mathbb P_{k}(K; \mathbb S)$ to the discontinuous polynomial space $\Sigma^{-1}_h(K)$. This enrichment eliminates the stabilization. 

Define the weak divergence operator
$
\div _w: \Sigma^{-1}_h \rightarrow M_h
$
by
$$
\div _w \boldsymbol{\sigma}_h=\{\div _{w,K}\boldsymbol{\sigma}_h, -h_F^{-1}[\boldsymbol{\sigma}_h\boldsymbol{n}]\}_{\mathcal{K}_h,\mathcal{F}_h^{K}},
$$
where the local weak divergence $\div _{w,K} \boldsymbol{\sigma}_h \in \mathbb{P}_{k+1}(K; \mathbb{R}^d)$ is defined as
$$
(\div _{w,K}\boldsymbol{\sigma}_h,\boldsymbol{u}_0)_K=
\sum_{T\in\mathcal{T}_h(K)}(\div  \boldsymbol{\sigma}_h, \boldsymbol{u}_0)_T 
-\sum_{F\in \mathcal{F}_h^T(K)\setminus \partial K} \langle  [\boldsymbol{\sigma}_h\boldsymbol{n}], \boldsymbol{u}_0\rangle_{F},
$$
for any $\boldsymbol{u}_0\in \mathbb{P}_{k+1}(K;\mathbb{R}^d)$.
For $\boldsymbol{u}_h, \boldsymbol{v}_h \in M_h$, introduce a discrete $L^2$ inner product
$$
(\boldsymbol{u}_h,\boldsymbol{v}_h)_{0,h}=(\boldsymbol{u}_0,\boldsymbol{v}_0)+\sum_{F\in \mathcal{F}_h^{K}}h_F\langle \boldsymbol{u}_b,\boldsymbol{v}_b\rangle_{F},
$$
and the discrete $L^2$ norm
$$
\|\boldsymbol{u}_h\|_{0,h}^2=(\boldsymbol{u}_h,\boldsymbol{u}_h)_{0,h}.
$$

\begin{lemma}[Adjoint Property of Weak Differential Operators]
Let $M_h$ and $\Sigma_h^{-1}$ be the discrete spaces for the displacement and stress fields, respectively. For any $\boldsymbol{u}_h = \{\boldsymbol{u}_0, \boldsymbol{u}_b\} \in M_h$ and $\boldsymbol{\sigma}_h \in \Sigma_h^{-1}$, the weak strain operator $\boldsymbol{\varepsilon}_w$ and the weak divergence operator $\div_w$ satisfy the following adjoint relation:
\begin{equation}
(\boldsymbol{\varepsilon}_w(\boldsymbol{u}_h), \boldsymbol{\sigma}_h) = -(\boldsymbol{u}_h, \div_w \boldsymbol{\sigma}_h)_{0,h}.
\end{equation}
\end{lemma}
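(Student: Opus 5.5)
The plan is to verify the identity element by element, using the definition of $\boldsymbol{\varepsilon}_{w,K}$ tested with $\boldsymbol{\tau}=\boldsymbol{\sigma}_h|_K\in\Sigma_h^{-1}(K)$. Then I integrate by parts on each pyramid $T\in\mathcal{T}_h(K)$ and collect the face terms. For a fixed $K\in\mathcal{K}_h$ the definition gives
\begin{equation*}
(\boldsymbol{\varepsilon}_{w,K}(\boldsymbol{u}_h),\boldsymbol{\sigma}_h)_K=(\boldsymbol{\varepsilon}(\boldsymbol{u}_0),\boldsymbol{\sigma}_h)_K+\langle \boldsymbol{u}_b-\boldsymbol{u}_0,\boldsymbol{\sigma}_h\boldsymbol{n}\rangle_{\partial K}.
\end{equation*}
Since $\boldsymbol{\sigma}_h$ is symmetric, $(\boldsymbol{\varepsilon}(\boldsymbol{u}_0),\boldsymbol{\sigma}_h)_T=(\nabla\boldsymbol{u}_0,\boldsymbol{\sigma}_h)_T$. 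Because $\boldsymbol{u}_0$ is a single polynomial on $K$ while $\boldsymbol{\sigma}_h$ is smooth on each $T$, integration by parts on each $T$ gives $-(\div\boldsymbol{\sigma}_h,\boldsymbol{u}_0)_T+\langle \boldsymbol{\sigma}_h\boldsymbol{n}_T,\boldsymbol{u}_0\rangle_{\partial T}$.

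Next I sum over $T\in\mathcal{T}_h(K)$ and split $\partial T$ into internal faces $F\in\mathcal{F}_h^T(K)\setminus\partial K$ and the primal face $F=\partial T\cap\partial K$. On each internal face, $\boldsymbol{u}_0$ is continuous, and the two contributions combine into $\langle[\boldsymbol{\sigma}_h\boldsymbol{n}],\boldsymbol{u}_0\rangle_F$ with $[\boldsymbol{\sigma}_h\boldsymbol{n}]=\boldsymbol{\sigma}_h^1\boldsymbol{n}_1+\boldsymbol{\sigma}_h^2\boldsymbol{n}_2$; this definition is orientation-free. The primal-face contributions give exactly $\langle\boldsymbol{u}_0,\boldsymbol{\sigma}_h\boldsymbol{n}\rangle_{\partial K}$, which cancels the $-\boldsymbol{u}_0$ term above. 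Comparing with the definition of $\div_{w,K}$, tested with $\boldsymbol{u}_0\in\mathbb{P}_{k+1}(K;\mathbb{R}^d)$ (an admissible test function), I obtain
\begin{equation*}
(\boldsymbol{\varepsilon}_{w,K}(\boldsymbol{u}_h),\boldsymbol{\sigma}_h)_K=-(\div_{w,K}\boldsymbol{\sigma}_h,\boldsymbol{u}_0)_K+\langle\boldsymbol{u}_b,\boldsymbol{\sigma}_h\boldsymbol{n}\rangle_{\partial K}.
\end{equation*}

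Finally I sum over $K\in\mathcal{K}_h$ and regroup the boundary terms by primal faces. The face unknown $\boldsymbol{u}_b$ is single-valued on each $F\in\mathcal{F}_h^K$. For an interior $F$, the two neighbouring elements therefore contribute $\langle\boldsymbol{u}_b,[\boldsymbol{\sigma}_h\boldsymbol{n}]\rangle_F$. For $F\subset\partial\Omega$, the single contribution is $\langle\boldsymbol{u}_b,\boldsymbol{\sigma}_h\boldsymbol{n}_{\partial\Omega}\rangle_F=\langle\boldsymbol{u}_b,[\boldsymbol{\sigma}_h\boldsymbol{n}]\rangle_F$, by the boundary convention for the jump. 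On the other side of the identity, the definitions of $(\cdot,\cdot)_{0,h}$ and $\div_w$ give
\begin{equation*}
-(\boldsymbol{u}_h,\div_w\boldsymbol{\sigma}_h)_{0,h}=-\sum_{K}(\boldsymbol{u}_0,\div_{w,K}\boldsymbol{\sigma}_h)_K+\sum_{F\in\mathcal{F}_h^K}h_F\langle\boldsymbol{u}_b,h_F^{-1}[\boldsymbol{\sigma}_h\boldsymbol{n}]\rangle_F.
\end{equation*}
The weights $h_F$ cancel, so the two expressions coincide.

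The only delicate point is bookkeeping rather than analysis. One must check that the internal jump terms arising from integration by parts match, with the correct sign, those built into $\div_{w,K}$. One must also check that the face component $-h_F^{-1}[\boldsymbol{\sigma}_h\boldsymbol{n}]$ is a legitimate element of the face part of $M_h$. The latter holds because, by the trace characterization \eqref{eq:SigmaktracepartK} (and its $k=0$ analogue), the normal traces of $\Sigma_h^{-1}$ on primal faces lie in $V_k(F)$.
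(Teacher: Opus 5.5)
Your proposal is correct and follows the paper's proof. Both arguments test the definition of $\boldsymbol{\varepsilon}_{w,K}$ with $\boldsymbol{\sigma}_h|_K$ and integrate by parts on each pyramid, so the interior jumps reproduce $\div_{w,K}$ and the $\boldsymbol{u}_0$ traces on $\partial K$ cancel; they then regroup the $\langle\boldsymbol{u}_b,\boldsymbol{\sigma}_h\boldsymbol{n}\rangle_{\partial K}$ terms by primal faces so the $h_F$ weights cancel. Your extra check that $[\boldsymbol{\sigma}_h\boldsymbol{n}]|_F\in V_k(F)$, so that $\div_w$ really maps into $M_h$, is left implicit in the paper and does not change the argument.
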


\begin{proof}
By applying the definition of the weak strain operator $\boldsymbol{\varepsilon}_{w,K}$ on each element $K \in \mathcal{K}_h$ and using integration by parts, we have:
\begin{align*}
(\boldsymbol{\varepsilon}_w(\boldsymbol{u}_h), \boldsymbol{\sigma}_h) &= \sum_{K \in \mathcal{K}_h} (\boldsymbol{\varepsilon}_{w,K}(\boldsymbol{u}_h), \boldsymbol{\sigma}_h)_K \\
&= \sum_{K \in \mathcal{K}_h} \left( (\boldsymbol{\varepsilon}(\boldsymbol{u}_0), \boldsymbol{\sigma}_h)_K + \langle \boldsymbol{u}_b - \boldsymbol{u}_0, \boldsymbol{\sigma}_h \boldsymbol{n} \rangle_{\partial K} \right) \\
&= -\sum_{T \in \mathcal{T}_h} (\boldsymbol{u}_0, \div \boldsymbol{\sigma}_h)_T + \sum_{F\in \mathcal{F}_h^T\setminus \mathcal{F}_h} \langle  [\boldsymbol{\sigma}_h\boldsymbol{n}], \boldsymbol{u}_0\rangle_{F} + \sum_{K \in \mathcal{K}_h} \langle \boldsymbol{u}_b, \boldsymbol{\sigma}_h \boldsymbol{n} \rangle_{\partial K}.
\end{align*}
By the definitions of $\div_{w,K}$ and $\div_{w}$,
\begin{align*}
(\boldsymbol{\varepsilon}_w(\boldsymbol{u}_h), \boldsymbol{\sigma}_h) &= -\sum_{K \in \mathcal{K}_h} (\div_{w,K} \boldsymbol{\sigma}_h, \boldsymbol{u}_0)_K + \sum_{F \in \mathcal{F}_h} \langle \boldsymbol{u}_b, [\boldsymbol{\sigma}_h \boldsymbol{n}] \rangle_F = -(\div_w \boldsymbol{\sigma}_h, \boldsymbol{u}_h)_{0,h}.
\end{align*}
This completes the proof that the weak gradient (strain) and weak divergence are adjoint operators in the discrete setting.
\end{proof}

\subsection{The hybridized formulation}
The hybridized formulation of the SDG method \eqref{elassdg}: for $k\geq0$, find $\boldsymbol{\sigma}_h\in \Sigma_h^{-1}$ and $\boldsymbol{u}_h\in M_h^0$ such that
\begin{subequations}\label{elasshdg}
\begin{align}
\label{elasshdg1}
a_h(\boldsymbol{\sigma}_h,\boldsymbol{\tau}_h)+b_h(\boldsymbol{\tau}_h,\boldsymbol{u}_h)&=-\sum_{T\in\mathcal{T}_h}h_T^2(\boldsymbol{f},\div \boldsymbol{\tau}_h)_T\;\;\forall~\boldsymbol{\tau}_h\in \Sigma_h^{-1},
\\
\label{elasshdg2}
b_h(\boldsymbol{\sigma}_h,\boldsymbol{v}_h)\qquad\qquad\qquad&=-(\boldsymbol{f},\boldsymbol{v}_0)\;\qquad\qquad\qquad\forall~\boldsymbol{v}_h\in M_h^0,
\end{align}
\end{subequations}
where
\begin{align*}
a_h(\boldsymbol{\sigma}_h,\boldsymbol{\tau}_h)&:=(\mathcal{A}\boldsymbol{\sigma}_h,\boldsymbol{\tau}_h)
+\sum_{T\in\mathcal{T}_h}h_T^2(\div \boldsymbol{\sigma}_h,\div \boldsymbol{\tau}_h)_T
+\sum_{F\in {\mathcal{F}}_h^{T}\setminus {\mathcal{F}}_h^{K}}h_F\langle [\boldsymbol{\sigma}_h\boldsymbol{n}],[\boldsymbol{\tau}_h\boldsymbol{n}]\rangle_{F},
\\
b_h(\boldsymbol{\sigma}_h,\boldsymbol{u}_h)&:=(\div _w\boldsymbol{\sigma}_h,\boldsymbol{u}_h)_{0,h}=\!-\!\!\sum_{K\in\mathcal{K}_h}(\boldsymbol{\sigma}_h,\boldsymbol{\varepsilon}(\boldsymbol{u}_0))_K+\sum_{K\in\mathcal{K}_h}\langle \boldsymbol{\sigma}_h \boldsymbol{n},\boldsymbol{u}_0-\boldsymbol{u}_b\rangle_{\partial K}.
\end{align*}

We respectively equip spaces $M_h$ and $\Sigma_h^{-1}$ with semi-norms
\begin{align*}
\|\boldsymbol{u}_h\|_{1,h}^2&:=\sum_{K\in\mathcal{K}_h}\|\boldsymbol{\varepsilon}(\boldsymbol{u}_0)\|_K^2+\sum_{K\in\mathcal{K}_h}h_K^{-1}\|\Pi_k^F(\boldsymbol{u}_0-\boldsymbol{u}_b)\|_{\partial K}^2, \qquad\qquad\;\; \boldsymbol{u}_h\in M_h, \\
\|\boldsymbol{\sigma}_h\|_{0,h}^2&:=\|\boldsymbol{\sigma}_h\|^2
+\sum_{K\in \mathcal{K}_h}h_K\|\boldsymbol{\sigma}_h \boldsymbol{n}\|^2_{\partial K}+\sum_{F\in {\mathcal{F}}_h^{T}\setminus {\mathcal{F}}_h^{K}}h_F\|[\boldsymbol{\sigma}_h\boldsymbol{n}]\|_{F}^2,
\qquad \boldsymbol{\sigma}_h\in \Sigma_h^{-1}.
\end{align*}
It is easy to prove that
$\|\cdot\|_{1,h}$ is a norm on space $M_h^0$
and $\|\cdot\|_{0,h}$ is a norm on space $\Sigma_h^{-1}$ with $k\geq 0$.
We have the continuity
\begin{align*}
&a_h(\boldsymbol{\sigma}_h,\boldsymbol{\tau}_h)\lesssim \|\boldsymbol{\sigma}_h\|_{0,h}\|\boldsymbol{\tau}_h\|_{0,h} \quad\forall~\boldsymbol{\sigma}_h,\boldsymbol{\tau}_h\in \Sigma_h^{-1},
\\
&b_h(\boldsymbol{\sigma}_h,\boldsymbol{u}_h)\lesssim \|\boldsymbol{\sigma}_h\|_{0,h}\|\boldsymbol{u}_h\|_{1,h} \quad\forall~\boldsymbol{\sigma}_h\in \Sigma_h^{-1}, \boldsymbol{u}_h\in M_h^0.
\end{align*}

\begin{lemma}
We have the discrete inf-sup condition
\begin{equation}\label{eq:hdgdiscinfsup}
\|\boldsymbol{u}_h\|_{1,h}\lesssim \sup_{\boldsymbol{\sigma}_h\in\Sigma_h^{-1}}\frac{b_h(\boldsymbol{\sigma}_h,\boldsymbol{u}_h)}{\|\boldsymbol{\sigma}_h\|_{0,h}}\quad\forall~\boldsymbol{u}_h\in M_h^0.
\end{equation}
\end{lemma}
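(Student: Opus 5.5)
We mimic the construction in the proof of Lemma~\ref{eqn_infsup}, now working element by element on $\Sigma_h^{-1}$, where no normal continuity is imposed across primal faces. Given $\boldsymbol{u}_h=\{\boldsymbol{u}_0,\boldsymbol{u}_b\}\in M_h^0$, on each $K\in\mathcal{K}_h$ we define $\boldsymbol{\sigma}_h|_K\in\Sigma_h^{-1}(K)$ through the DoFs of Lemma~\ref{lemma_DOF} (or the lowest-order analogue \eqref{SigmaKDoFsk0} when $k=0$):
\begin{align*}
\langle \boldsymbol{\sigma}_h\boldsymbol{n},\boldsymbol{q}\rangle_F&=\langle h_K^{-1}\Pi_k^F(\boldsymbol{u}_0-\boldsymbol{u}_b),\boldsymbol{q}\rangle_F\quad\forall~\boldsymbol{q}\in V_k(F),\ F\subset\partial K,\\
(\boldsymbol{\sigma}_h,\boldsymbol{\tau})_K&=-(\boldsymbol{\varepsilon}(\boldsymbol{u}_0),\boldsymbol{\tau})_K\qquad\quad\;\;\forall~\boldsymbol{\tau}\in\mathbb{P}_k(K;\mathbb{S}),
\end{align*}
and we set the remaining DoFs \eqref{SigmaKDoFs2} (resp.\ \eqref{SigmaKDoFs2k0}) to zero. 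Unisolvence guarantees that this $\boldsymbol{\sigma}_h$ is well defined, and the boundary conditions $\boldsymbol{u}_b|_{\partial\Omega}=\boldsymbol{0}$ are already built into $M_h^0$.

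Next we evaluate $b_h(\boldsymbol{\sigma}_h,\boldsymbol{u}_h)$ using the second expression in its definition. On each face $F\subset\partial K$, the trace $\boldsymbol{u}_0-\boldsymbol{u}_b$ enters only through its pairing with $\boldsymbol{\sigma}_h\boldsymbol{n}\in V_k(F)$, so it may be replaced by $\Pi_k^F(\boldsymbol{u}_0-\boldsymbol{u}_b)$. For $k\ge1$ we have $\boldsymbol{\varepsilon}(\boldsymbol{u}_0)\in\mathbb{P}_k(K;\mathbb{S})$, so the interior term reduces to $\|\boldsymbol{\varepsilon}(\boldsymbol{u}_0)\|_K^2$. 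For $k=0$ the same conclusion holds because $\boldsymbol{u}_0\in\mathbb{P}_1$, so $\boldsymbol{\varepsilon}(\boldsymbol{u}_0)$ is constant. Both signs work out, since $b_h$ carries a minus sign on the volume term and a plus sign on the $\boldsymbol{u}_0-\boldsymbol{u}_b$ term. Hence
\[
b_h(\boldsymbol{\sigma}_h,\boldsymbol{u}_h)=\sum_{K}\|\boldsymbol{\varepsilon}(\boldsymbol{u}_0)\|_K^2+\sum_K h_K^{-1}\|\Pi_k^F(\boldsymbol{u}_0-\boldsymbol{u}_b)\|_{\partial K}^2=\|\boldsymbol{u}_h\|_{1,h}^2 .
\]

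It then remains to show $\|\boldsymbol{\sigma}_h\|_{0,h}\lesssim\|\boldsymbol{u}_h\|_{1,h}$, which we expect to be the main obstacle. We would use a scaling argument on the reference configuration of $K$ together with shape regularity of $\mathcal{T}_h(K)$, which gives the norm equivalence
\[
\|\boldsymbol{\sigma}\|_K^2\eqsim\sum_{F\subset\partial K}h_K\|\Pi_k^F\boldsymbol{\sigma}\boldsymbol{n}\|_F^2+\sup_{\boldsymbol{\tau}\in\mathbb{P}_k(K;\mathbb{S})}\frac{(\boldsymbol{\sigma},\boldsymbol{\tau})_K^2}{\|\boldsymbol{\tau}\|_K^2}
\]
for $\boldsymbol{\sigma}$ with vanishing DoFs \eqref{SigmaKDoFs2}. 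This gives $\|\boldsymbol{\sigma}_h\|_K^2\lesssim\|\boldsymbol{\varepsilon}(\boldsymbol{u}_0)\|_K^2+h_K^{-1}\|\Pi_k^F(\boldsymbol{u}_0-\boldsymbol{u}_b)\|_{\partial K}^2$. The delicate point is that the constant must be uniform over polytopes with bounded chunkiness: the choice of the $b_\ell$ in Assumption~\ref{meshassumption} must be stable, meaning the dual basis $\tilde\tau_\ell$ stays bounded, and the number of faces must be bounded. We would argue this through compactness of the admissible shapes under the mesh assumptions.

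Finally, the face terms $h_K\|\boldsymbol{\sigma}_h\boldsymbol{n}\|_{\partial K}^2$ and the jumps $h_F\|[\boldsymbol{\sigma}_h\boldsymbol{n}]\|_F^2$ on interior faces of $\mathcal{T}_h(K)$ are controlled by $\|\boldsymbol{\sigma}_h\|_K^2$ via the trace and inverse inequalities on each pyramid. Combining these bounds gives $\|\boldsymbol{\sigma}_h\|_{0,h}\lesssim\|\boldsymbol{u}_h\|_{1,h}$, and dividing $b_h(\boldsymbol{\sigma}_h,\boldsymbol{u}_h)=\|\boldsymbol{u}_h\|_{1,h}^2$ by $\|\boldsymbol{\sigma}_h\|_{0,h}$ yields \eqref{eq:hdgdiscinfsup}.
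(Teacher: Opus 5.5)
Your proposal is correct and follows the paper's proof. It uses the same DoF-based choice of $\boldsymbol{\sigma}_h$ on each $K$: face moments $h_K^{-1}\Pi_k^F(\boldsymbol{u}_0-\boldsymbol{u}_b)$, $\mathbb{P}_k(K;\mathbb{S})$-moments equal to those of $-\boldsymbol{\varepsilon}(\boldsymbol{u}_0)$, and the remaining interior DoFs zero. It obtains the same identity $b_h(\boldsymbol{\sigma}_h,\boldsymbol{u}_h)=\|\boldsymbol{u}_h\|_{1,h}^2$ and the same scaling bound $\|\boldsymbol{\sigma}_h\|_{0,h}\lesssim\|\boldsymbol{u}_h\|_{1,h}$, which you spell out in more detail than the paper's one-line scaling argument.

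One caution on your compactness remark: limits of admissible elements can violate Assumption~\ref{meshassumption}. For example, shape-regular pentagons whose two top edges become collinear degenerate to a rectangle, so the $b_\ell$ become nearly dependent and $\tilde\tau_\ell$ blows up. A uniform constant therefore needs a quantitative version of that assumption, which the paper also leaves implicit.
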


\begin{proof}
For any given $\boldsymbol{u}_h\in M_h^0$, let $\boldsymbol{\sigma}_h\in\Sigma^{-1}_h$ satisfy
\begin{align}\label{eqn_sigm3}
\langle \boldsymbol{\sigma}_h\boldsymbol{n},\boldsymbol{q} \rangle_F&=\langle h_K^{-1}\Pi_k^F(\boldsymbol{u}_0-\boldsymbol{u}_b), \boldsymbol{q} \rangle_F\quad \forall~\boldsymbol{q}\in   V_{k}(F), F\in\partial K, \\ \label{eqn_sigm4}
(\boldsymbol{\sigma}_h,\boldsymbol{\tau})_K&=-(\boldsymbol{\varepsilon}( \boldsymbol{u}_0),\boldsymbol{\tau})_K \qquad\qquad\;\;\, \forall~\boldsymbol{\tau}\in \mathbb{P}_{k}(K;\mathbb{S}),
\end{align}
and DoFs \eqref{SigmaKDoFs2} vanish on each $K\in\mathcal{K}_h$. Using the scaling argument, we have
\begin{equation*}
\|\boldsymbol{\sigma}_h\|_{0,h}\lesssim \|\boldsymbol{u}_h\|_{1,h}.
\end{equation*}
Based on \eqref{eqn_sigm3}-\eqref{eqn_sigm4} and the definitions of $b_h(\cdot,\cdot)$ and $\|\cdot\|_{1,h}$, we get
\begin{equation*}
b_h(\boldsymbol{\sigma}_h,\boldsymbol{u}_h) = \sum_{K\in\mathcal{K}_h}\|\boldsymbol{\varepsilon}(\boldsymbol{u}_0)\|_K^2+\sum_{K\in\mathcal{K}_h}h_K^{-1}\| \Pi_k^F(\boldsymbol{u}_0-\boldsymbol{u}_b)\|_{\partial K}^2=\|\boldsymbol{u}_h\|_{1,h}^2.
\end{equation*}
Therefore, the discrete inf-sup condition \eqref{eq:hdgdiscinfsup} follows.
\end{proof}

\begin{lemma}
We have the discrete coercivity
\begin{equation}\label{eq:hdgahcoercive}
\|\boldsymbol{\sigma}_h\|_{0,h}^2\lesssim a_h(\boldsymbol{\sigma}_h,\boldsymbol{\sigma}_h)\quad\forall~\boldsymbol{\sigma}_h\in Z_h,
\end{equation}
where
$$
Z_h:=\left\{ \boldsymbol{\sigma}_h\in \Sigma_h^{-1}: \tr(\boldsymbol{\sigma}_h)\in L_0^2(\Omega), \textrm{ and } b_h(\boldsymbol{\sigma}_h,\boldsymbol{v}_h)=0 \textrm{ for all } \boldsymbol{v}_h\in M_h^0\right\}.
$$
\end{lemma}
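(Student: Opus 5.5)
We follow the route of Lemma~\ref{lem:trsigmabound} and Corollary~\ref{Corol_coercive}, now in the hybridized setting where $\boldsymbol{\sigma}_h\in\Sigma_h^{-1}$ has no a priori normal continuity across primal faces. The key observation is that membership in $Z_h$ enforces this continuity weakly through the hybrid variable $\boldsymbol{u}_b$. Since $[\boldsymbol{\sigma}_h\boldsymbol{n}]|_F\in V_k(F)$ for $F\in\mathcal{F}_h^K$, we test $b_h(\boldsymbol{\sigma}_h,\boldsymbol{v}_h)=0$ with $\boldsymbol{v}_h=\{\boldsymbol{0},\boldsymbol{v}_b\}$, where $\boldsymbol{v}_b$ ranges over $V_k(F)$ on interior faces $F$. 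This gives $[\boldsymbol{\sigma}_h\boldsymbol{n}]_F=\boldsymbol{0}$ on every interior primal face, so $\boldsymbol{\sigma}_h\in\Sigma_h$. Testing next with $\boldsymbol{v}_h=\{\boldsymbol{v}_0,\boldsymbol{0}\}$, and with the $\boldsymbol{u}_b$-components on interior faces chosen as single-valued traces, we show that $b_h$ restricted to such $\boldsymbol{\sigma}_h$ coincides with the SDG form of Section~\ref{sec:SDG} on $U_h$. Hence $\boldsymbol{\sigma}_h$ lies in the kernel space $Z_h$ of Section~\ref{sec:SDG}.

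With $\boldsymbol{\sigma}_h$ in that kernel, we reprove the trace bound \eqref{eq:trsigmabound} as in Lemma~\ref{lem:trsigmabound}. Take $\boldsymbol{v}\in H_0^1(\Omega;\mathbb{R}^d)$ with $-\div\boldsymbol{v}=\tr(\boldsymbol{\sigma}_h)$ and $\|\boldsymbol{v}\|_1\lesssim\|\tr(\boldsymbol{\sigma}_h)\|$. Subtract $b_h(\boldsymbol{\sigma}_h,\boldsymbol{v}_h)=0$, where now $\boldsymbol{v}_h=\{Q_{k+1}\boldsymbol{v},\Pi_k^F\boldsymbol{v}\}\in M_h^0$. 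The face terms $\langle\boldsymbol{\sigma}_h\boldsymbol{n},\boldsymbol{v}-\Pi_k^F\boldsymbol{v}\rangle_F$ vanish because $\boldsymbol{\sigma}_h\boldsymbol{n}|_F\in V_k(F)$. Integrating by parts on each $T\in\mathcal{T}_h$ then leaves only $(\dev\boldsymbol{\sigma}_h,\boldsymbol{\varepsilon}(\boldsymbol{v}))$, the element terms $(\div\boldsymbol{\sigma}_h,\boldsymbol{v}-Q_{k+1}\boldsymbol{v})_T$, and the dual-face jump terms on $\mathcal{F}_h^T\setminus\mathcal{F}_h^K$. These are bounded exactly as before, using the approximation and trace estimates for $Q_{k+1}$.

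Finally we repeat Corollary~\ref{Corol_coercive}. By \eqref{eq:lowerBoudA} and the trace bound,
\[
\|\boldsymbol{\sigma}_h\|^2\lesssim \|\dev\boldsymbol{\sigma}_h\|^2+\sum_{T\in\mathcal{T}_h}h_T^2\|\div\boldsymbol{\sigma}_h\|_T^2+\sum_{F\in\mathcal{F}_h^T\setminus\mathcal{F}_h^K}h_F\|[\boldsymbol{\sigma}_h\boldsymbol{n}]\|_F^2\lesssim a_h(\boldsymbol{\sigma}_h,\boldsymbol{\sigma}_h),
\]
with constants uniform in $\lambda$, since the coefficient $1/(2\mu)$ of $\|\dev\boldsymbol{\sigma}_h\|^2$ does not depend on $\lambda$. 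The remaining parts of $\|\boldsymbol{\sigma}_h\|_{0,h}$ are the term $\sum_K h_K\|\boldsymbol{\sigma}_h\boldsymbol{n}\|_{\partial K}^2$, which we bound by $\|\boldsymbol{\sigma}_h\|^2$ through the discrete trace and inverse inequalities, and the dual-face jump term, which already appears in $a_h$.

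The main obstacle is the first step: justifying that $Z_h$ forces normal continuity across primal faces and that $\tr(\boldsymbol{\sigma}_h)\in L_0^2$ is usable. On boundary faces $\boldsymbol{u}_b=\boldsymbol{0}$, so no constraint arises there. Both points match the SDG situation, so we only need to track the boundary terms carefully.
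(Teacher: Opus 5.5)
Your proposal is correct and follows the paper's route: testing with $\{\boldsymbol{0},\boldsymbol{v}_b\}$ and then with $\{\boldsymbol{v}_0,\boldsymbol{0}\}$ places $\boldsymbol{\sigma}_h$ in the SDG kernel, after which Corollary~\ref{Corol_coercive} applies directly. Re-deriving the trace bound and the corollary in hybrid variables is valid but unnecessary; the one addition worth keeping is your remark that the extra dual-face jump term in the hybrid $\|\cdot\|_{0,h}$ norm is controlled directly by $a_h$, which the paper leaves implicit.
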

\begin{proof}
By choosing $\boldsymbol{v}_h=\{0,\boldsymbol{v}_b\}\in M_h^0$ in the definition of $Z_h$, we find that $Z_h\subseteq \Sigma_h$.
Thus, we end the proof by applying the discrete coercivity \eqref{eq:ahcoercive}.
\end{proof}

\begin{theorem}
Let $\boldsymbol{\sigma}_h\in \Sigma_h^{-1}$ and $\boldsymbol{u}_h=\{\boldsymbol{u}_0,\boldsymbol{u}_b\}\in M_h^0$ be the solution of the hybridized formulation \eqref{elasshdg}. 
The hybridized SDG method \eqref{elasshdg} is well-posed and stable. 
The following stability estimate holds:
\begin{equation}\label{eq:hybridstability}
\|\boldsymbol{\sigma}_h\|_{0,h}+\|\boldsymbol{u}_h\|_{1,h}\lesssim\sup_{\boldsymbol{\tau}_h\in \Sigma_h^{-1},\boldsymbol{v}_h\in M_h^0}\frac{a_h(\boldsymbol{\sigma}_h,\boldsymbol{\tau}_h)+b_h(\boldsymbol{\tau}_h,\boldsymbol{u}_h)+b_h(\boldsymbol{\sigma}_h,\boldsymbol{v}_h)}{\|\boldsymbol{\tau}_h\|_{0,h}+\|\boldsymbol{v}_h\|_{1,h}}.
\end{equation}
Then $\boldsymbol{\sigma}_h\in \Sigma_h$, and the pair $(\boldsymbol{\sigma}_h,\boldsymbol{u}_0)$ is exactly the solution of the SDG method \eqref{elassdg}.
\end{theorem}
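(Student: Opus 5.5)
The plan has three parts: establish the stability bound \eqref{eq:hybridstability} from the Babu\v{s}ka--Brezzi theory, deduce well-posedness from it, and then identify the hybrid solution with the solution of \eqref{elassdg}.

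\textbf{Stability.} The ingredients are already in hand: continuity of $a_h$ and $b_h$ on $\Sigma_h^{-1}\times M_h^0$, the inf-sup condition \eqref{eq:hdgdiscinfsup}, and coercivity \eqref{eq:hdgahcoercive} on $Z_h$. The one subtlety is that $Z_h$ contains the extra constraint $\tr(\boldsymbol{\sigma}_h)\in L_0^2(\Omega)$, while the true kernel of $b_h$ does not. I will handle this as in Theorem~\ref{thm:elassdg}.
\begin{itemize}
\item Since $\boldsymbol{I}\in\Sigma_h^{-1}$ and $\div\boldsymbol{I}=0$, $[\boldsymbol{I}\boldsymbol{n}]=0$ on interior faces of $\mathcal T_h$, we get $a_h(\boldsymbol{\sigma}_h,\boldsymbol{I})=(\mathcal A\boldsymbol{\sigma}_h,\boldsymbol{I})=c_{\mu,\lambda}\int_\Omega\tr\boldsymbol{\sigma}_h$.
\item Also $b_h(\boldsymbol{I},\boldsymbol{u}_h)=-\sum_K(\div\boldsymbol{u}_0,1)_K+\sum_K\langle\boldsymbol{n},\boldsymbol{u}_0-\boldsymbol{u}_b\rangle_{\partial K}=-\sum_{K}\langle \boldsymbol{u}_b,\boldsymbol{n}\rangle_{\partial K}$. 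This vanishes because interior faces cancel and $\boldsymbol{u}_b=0$ on $\partial\Omega$.
\end{itemize}
So testing with $\boldsymbol{\tau}_h=\boldsymbol{I}$ isolates the mean of $\tr\boldsymbol{\sigma}_h$. I will split $\boldsymbol{\sigma}_h=\boldsymbol{\sigma}_h^0+c\boldsymbol{I}$ with $\tr\boldsymbol{\sigma}_h^0\in L_0^2$. Applying the standard Brezzi argument to the pair restricted to the trace-mean-zero subspace, together with the $\boldsymbol I$-component controlled through the $\boldsymbol{\tau}_h=\boldsymbol{I}$ test, yields \eqref{eq:hybridstability}. The $\boldsymbol I$ test gives $c\lesssim\|\boldsymbol f\|$ with a possibly $\lambda$-dependent constant, but for the homogeneous problem it just gives $c=0$. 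I expect this step to be the main obstacle: the $\lambda$-uniform inf-sup over the full $\Sigma_h^{-1}\times M_h^0$ requires tracking the $\boldsymbol{I}$ direction carefully. Well-posedness then follows because the system is square and finite dimensional, and \eqref{eq:hybridstability} gives injectivity.

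\textbf{Identification with \eqref{elassdg}.} I will take $\boldsymbol{v}_h=\{0,\boldsymbol{v}_b\}\in M_h^0$ in \eqref{elasshdg2}. This gives $\sum_K\langle\boldsymbol{\sigma}_h\boldsymbol{n},\boldsymbol{v}_b\rangle_{\partial K}=\sum_{F\in\mathring{\mathcal F}_h^K}\langle[\boldsymbol{\sigma}_h\boldsymbol{n}],\boldsymbol{v}_b\rangle_F=0$ for all $\boldsymbol{v}_b\in V_k(F)$. Since $\boldsymbol{\sigma}_h\boldsymbol{n}|_F\in V_k(F)$ by the trace characterization \eqref{eq:SigmaktracepartK} (and its $k=0$ analogue), the jump vanishes, so $\boldsymbol{\sigma}_h\in\Sigma_h$.

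Next, for $\boldsymbol{\tau}_h\in\Sigma_h$, the term $\sum_K\langle\boldsymbol{\tau}_h\boldsymbol{n},\boldsymbol{u}_b\rangle_{\partial K}$ drops out: normal continuity across interior primal faces makes it cancel, and $\boldsymbol{u}_b=0$ on $\partial\Omega$. What remains is $\sum_K\langle\boldsymbol{\tau}_h\boldsymbol{n},\boldsymbol{u}_0\rangle_{\partial K}=\sum_{F\in\mathcal F_h^K}\langle\boldsymbol{\tau}_h\boldsymbol{n},[\boldsymbol{u}_0]\rangle_F$. Hence the hybrid $b_h(\boldsymbol{\tau}_h,\boldsymbol{u}_h)$ coincides with the SDG $b_h(\boldsymbol{\tau}_h,\boldsymbol{u}_0)$, and restricting \eqref{elasshdg1} to $\boldsymbol{\tau}_h\in\Sigma_h$ gives \eqref{elassdg1}.

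Similarly, taking $\boldsymbol{v}_h=\{\boldsymbol{v}_0,0\}$ in \eqref{elasshdg2}, with $\boldsymbol{\sigma}_h\in\Sigma_h$, gives \eqref{elassdg2}. Uniqueness from Theorem~\ref{thm:elassdg} then shows that $(\boldsymbol{\sigma}_h,\boldsymbol{u}_0)$ is the SDG solution.
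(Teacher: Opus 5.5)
Your proposal follows the paper's proof. Stability comes from the Brezzi argument applied to \eqref{eq:hdgdiscinfsup} and \eqref{eq:hdgahcoercive}. The test $\boldsymbol v_h=\{\boldsymbol 0,\boldsymbol v_b\}$ gives $\boldsymbol\sigma_h\in\Sigma_h$, and the system then reduces to \eqref{elassdg}. In two places you are more complete than the paper:
\begin{itemize}
\item You check that $[\boldsymbol\sigma_h\boldsymbol n]_F\in V_k(F)$, so the jump can be used as the test $\boldsymbol v_b$.
\item You verify that \eqref{elasshdg1}, restricted to $\boldsymbol\tau_h\in\Sigma_h$, becomes \eqref{elassdg1} because the $\boldsymbol u_b$ term cancels. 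The paper's proof only spells out the test with $\{\boldsymbol v_0,\boldsymbol 0\}$, which handles \eqref{elassdg2} alone.
\end{itemize}

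The one loose end is the step you flag as the main obstacle. It disappears once two points are made.

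First, testing \eqref{elasshdg1} with $\boldsymbol\tau_h=\boldsymbol I$ gives $\int_\Omega\tr\boldsymbol\sigma_h=0$ for every $\boldsymbol f$, not only for $\boldsymbol f=\boldsymbol 0$. The right-hand side $-\sum_T h_T^2(\boldsymbol f,\div\boldsymbol I)_T$ vanishes. So $c=0$ for the solution, and no $\lambda$-dependent constant ever enters.

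Second, you should not aim for a $\lambda$-uniform inf-sup on all of $\Sigma_h^{-1}\times M_h^0$, because it is false. Take $\boldsymbol\sigma_h=\boldsymbol I$ and $\boldsymbol u_h=\boldsymbol 0$. Then $b_h(\boldsymbol I,\cdot)=0$, $\div\boldsymbol I=0$, and $[\boldsymbol I\boldsymbol n]=0$, so the supremum in \eqref{eq:hybridstability} is at most of order $(2\mu+d\lambda)^{-1}$, while $\|\boldsymbol I\|_{0,h}\eqsim|\Omega|^{1/2}$.

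What is needed is Brezzi on $\{\boldsymbol\tau_h\in\Sigma_h^{-1}:\tr\boldsymbol\tau_h\in L_0^2(\Omega)\}\times M_h^0$.
\begin{itemize}
\item The kernel of $b_h$ in this space is exactly $Z_h$, where \eqref{eq:hdgahcoercive} applies.
\item The inf-sup condition on this subspace follows from \eqref{eq:hdgdiscinfsup}: subtract the trace mean $c\boldsymbol I$ from the test stress built there. This is harmless because $b_h(\boldsymbol I,\cdot)=0$, which you already computed, and $\|c\boldsymbol I\|_{0,h}\lesssim\|\boldsymbol\sigma_h\|$ by shape regularity.
\end{itemize}
The solution lies in this subspace, and enlarging the test space to $\Sigma_h^{-1}$ only increases the supremum. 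Hence \eqref{eq:hybridstability} holds with a $\lambda$-independent constant. The paper's one-line ``follows directly'' silently relies on exactly this.
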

\begin{proof}
By the same argument as in Theorem~\ref{thm:elassdg}, the hybridized SDG method \eqref{elasshdg} is well-posed.
The stability estimate \eqref{eq:hybridstability} follows directly from the discrete inf-sup condition \eqref{eq:hdgdiscinfsup} and the discrete coercivity \eqref{eq:hdgahcoercive}.

To show that $\boldsymbol{\sigma}_h\in \Sigma_h$, take $\boldsymbol{v}_h=\{\boldsymbol{0},\boldsymbol{v}_b\}$ in \eqref{elasshdg2} to obtain
\[
\sum_{K\in\mathcal{K}_h}
\langle \boldsymbol{\sigma}_h \boldsymbol{n}, \boldsymbol{v}_b\rangle_{\partial K}
=0
\qquad
\forall\, \boldsymbol{v}_b\in \prod_{F\in\mathcal{F}_h^{K}}  V_{k}(F).
\]
This is precisely the condition ensuring $\boldsymbol{\sigma}_h\in \Sigma_h$.

Finally, taking $\boldsymbol{v}_h=\{\boldsymbol{v}_0,\boldsymbol{0}\}$ in \eqref{elasshdg} eliminates the facet term and reduces the hybridized formulation to the SDG method \eqref{elassdg}.
Thus $(\boldsymbol{\sigma}_h,\boldsymbol{u}_0)$ is exactly the SDG solution.
\end{proof}

\subsection{Error estimates}
In order to conduct error analysis, we introduce nodal interpolations.
Let 
$\boldsymbol{Q}_h=\{\boldsymbol{Q}_{k+1},\Pi_k^F\}$ be the $L^2-$projection operator from $H^1(\Omega;\mathbb R^d)$ onto spaces $M_h$. 
For $\boldsymbol{\sigma}\in H^1(\Omega;\mathbb S)$, let $\boldsymbol{\sigma}_{I}\in \Sigma_h$ be the nodal interpolation based on DoFs~\eqref{SigmaKDoFs} and \eqref{SigmaKDoFsk0}.
We have
\begin{align}
\label{eq:bhIh}
b_h(\boldsymbol{\sigma}-\boldsymbol{\sigma}_{I},\boldsymbol{v})&=\sum_{K\in\mathcal{K}_h}\langle (\boldsymbol{\sigma}-\boldsymbol{\sigma}_{I}) \boldsymbol{n},\boldsymbol{v}_0-\boldsymbol{v}_b\rangle_{\partial K}\quad\forall~\boldsymbol{\sigma}\in H^{1}(\Omega;\mathbb S), \boldsymbol{v}\in M_h^0, \\
\label{eqn_inq_sigma}
\|\boldsymbol{\sigma}-\boldsymbol{\sigma}_{I}\|_{0,h}&\lesssim h^{k+1}\|\boldsymbol{\sigma}\|_{k+1}\qquad\qquad\qquad\qquad\;\forall~\boldsymbol{\sigma}\in H^{k+1}(\Omega;\mathbb S).
\end{align}

\begin{theorem}
Let $\boldsymbol{\sigma}\in H^{k+1}(\Omega; \mathbb{S})$ and $\boldsymbol{u}\in H^{k+2}(\Omega; \mathbb{R}^d)$ be the solution of the linear elasticity \eqref{equ_elasticity}, and let $\boldsymbol{\sigma}_h\in \Sigma_h^{-1}$ and $\boldsymbol{u}_h\in M_h^0$ be the solution of the hybridized method \eqref{elasshdg}. We have the following error estimate
\begin{equation}
\begin{array}{l}
  \displaystyle
\|\boldsymbol{u}-\boldsymbol{u}_h\|_{1,h}+\|\boldsymbol{\sigma}-\boldsymbol{\sigma}_h\|_{0,h}\lesssim h^{k+1}(\|\boldsymbol{\sigma}\|_{k+1}+\|\boldsymbol{u}\|_{k+2}).
\end{array}\label{eqn_H1errest}
\end{equation}
\end{theorem}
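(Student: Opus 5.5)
The plan is to compare the discrete solution with the pair of interpolants $(\boldsymbol{\sigma}_I,\boldsymbol{Q}_h\boldsymbol{u})$, where $\boldsymbol{Q}_h\boldsymbol{u}=\{Q_{k+1}\boldsymbol{u},\Pi_k^F\boldsymbol{u}\}\in M_h^0$ (the boundary part vanishes since $\boldsymbol{u}|_{\partial\Omega}=\boldsymbol{0}$). We split the errors as
\[
\boldsymbol{\sigma}-\boldsymbol{\sigma}_h=(\boldsymbol{\sigma}-\boldsymbol{\sigma}_I)+\boldsymbol{e}_\sigma,\qquad
\boldsymbol{u}-\boldsymbol{u}_h=(\boldsymbol{u}-\boldsymbol{Q}_h\boldsymbol{u})+\boldsymbol{e}_u ,
\]
with $\boldsymbol{e}_\sigma=\boldsymbol{\sigma}_I-\boldsymbol{\sigma}_h$ and $\boldsymbol{e}_u=\boldsymbol{Q}_h\boldsymbol{u}-\boldsymbol{u}_h$. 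Here $\|\boldsymbol{u}-\boldsymbol{u}_h\|_{1,h}$ is read with $\boldsymbol{u}$ identified with $\{\boldsymbol{u},\boldsymbol{u}|_{\mathcal F_h}\}$. The interpolation parts are bounded by \eqref{eqn_inq_sigma} and by the standard approximation of $Q_{k+1}$ on shape-regular star-shaped polytopes: $\|\boldsymbol{\varepsilon}(\boldsymbol{u}-Q_{k+1}\boldsymbol{u})\|_K\lesssim h^{k+1}|\boldsymbol{u}|_{k+2,K}$. The face term $h_K^{-1}\|\Pi_k^F(\boldsymbol{u}-Q_{k+1}\boldsymbol{u}-(\boldsymbol{u}-\Pi_k^F\boldsymbol{u}))\|^2_{\partial K}$ reduces, via a trace inequality, to the same order. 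It therefore suffices to bound $\|\boldsymbol{e}_\sigma\|_{0,h}+\|\boldsymbol{e}_u\|_{1,h}$ using the stability estimate \eqref{eq:hybridstability}.

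\textbf{Consistency of the exact solution.} For $\boldsymbol{\tau}_h\in\Sigma_h^{-1}$, integrate by parts on each $T$. Use $\mathcal A\boldsymbol{\sigma}=\boldsymbol{\varepsilon}(\boldsymbol{u})$, $\div\boldsymbol{\sigma}=-\boldsymbol{f}$, and the fact that $[\boldsymbol{\sigma}\boldsymbol{n}]=0$ on interior faces of $\mathcal T_h$. This gives
\[
a_h(\boldsymbol{\sigma},\boldsymbol{\tau}_h)+b_h(\boldsymbol{\tau}_h,\{\boldsymbol{u},\boldsymbol{u}|_{\mathcal F_h}\})=-\sum_T h_T^2(\boldsymbol{f},\div\boldsymbol{\tau}_h)_T ,
\]
with the $\boldsymbol{\varepsilon}(\boldsymbol{u})$ terms cancelling and the trace jump $\boldsymbol{u}_0-\boldsymbol{u}_b$ vanishing. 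Similarly $b_h(\boldsymbol{\sigma},\boldsymbol{v}_h)=-(\boldsymbol{f},\boldsymbol{v}_0)$, because $\boldsymbol{\sigma}\boldsymbol{n}$ is single-valued and the $\boldsymbol{v}_b$ contributions cancel across interior faces and vanish on $\partial\Omega$. Subtracting \eqref{elasshdg} yields Galerkin orthogonality. Hence, for all test functions,
\[
a_h(\boldsymbol{e}_\sigma,\boldsymbol{\tau}_h)+b_h(\boldsymbol{\tau}_h,\boldsymbol{e}_u)+b_h(\boldsymbol{e}_\sigma,\boldsymbol{v}_h)
= a_h(\boldsymbol{\sigma}_I-\boldsymbol{\sigma},\boldsymbol{\tau}_h)+b_h(\boldsymbol{\tau}_h,\boldsymbol{Q}_h\boldsymbol{u}-\boldsymbol{u})+b_h(\boldsymbol{\sigma}_I-\boldsymbol{\sigma},\boldsymbol{v}_h).
\]
The first term is bounded by continuity and \eqref{eqn_inq_sigma}. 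The second term equals $-\sum_K(\boldsymbol{\tau}_h,\boldsymbol{\varepsilon}(\boldsymbol{Q}_{k+1}\boldsymbol{u}-\boldsymbol{u}))_K+\sum_K\langle\boldsymbol{\tau}_h\boldsymbol{n},(Q_{k+1}\boldsymbol{u}-\boldsymbol{u})-(\Pi_k^F\boldsymbol{u}-\boldsymbol{u})\rangle_{\partial K}$. It is bounded by $\|\boldsymbol{\tau}_h\|_{0,h}$ times $h^{k+1}\|\boldsymbol{u}\|_{k+2}$, using Cauchy--Schwarz with the weight $h_K$ on $\partial K$ in the $\|\cdot\|_{0,h}$ norm and trace inequalities for $\boldsymbol{u}-Q_{k+1}\boldsymbol{u}$ and $\boldsymbol{u}-\Pi_k^F\boldsymbol{u}$. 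Note that $\boldsymbol{\tau}_h\boldsymbol{n}|_F\in V_k(F)$, so the $\Pi_k^F$ part in fact drops out.

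\textbf{Main obstacle: the third term.} By \eqref{eq:bhIh} it reduces to $\sum_K\langle(\boldsymbol{\sigma}-\boldsymbol{\sigma}_I)\boldsymbol{n},\boldsymbol{v}_0-\boldsymbol{v}_b\rangle_{\partial K}$, which must be bounded by $h^{k+1}\|\boldsymbol{\sigma}\|_{k+1}\|\boldsymbol{v}_h\|_{1,h}$. The difficulty is that $\|\boldsymbol{v}_h\|_{1,h}$ controls only $\Pi_k^F(\boldsymbol{v}_0-\boldsymbol{v}_b)$.
\begin{itemize}
\item Since $\boldsymbol{\sigma}_I$ matches the face moments \eqref{SigmaKDoFs1} against $V_k(F)$, the traction error is $L^2(F)$-orthogonal to $V_k(F)$. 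So we may replace $\boldsymbol{v}_0-\boldsymbol{v}_b$ by $(I-\Pi_k^F)\boldsymbol{v}_0$; note $\boldsymbol{v}_b\in V_k(F)$.
\item For $k\ge1$, we use $\|(I-\Pi_k^F)\boldsymbol{v}_0\|_F\le\|(I-\Pi_0^F)\boldsymbol{v}_0\|_F$, since $V_0\subset V_k$ here (${\rm RM}(K)|_F\subset\mathbb P_1(F;\mathbb R^d)\subset\mathbb P_k$). For $k=0$ the projection is onto ${\rm RM}(K)|_F$ directly.
\item We then subtract the $L^2(K)$ projection $\boldsymbol{r}$ of $\boldsymbol{v}_0$ onto ${\rm RM}(K)$. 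Since $\boldsymbol{r}|_F\in V_k(F)$, we get $\|(I-\Pi_k^F)\boldsymbol{v}_0\|_F\le\|\boldsymbol{v}_0-\boldsymbol{r}\|_F\lesssim h_K^{1/2}\|\boldsymbol{\varepsilon}(\boldsymbol{v}_0)\|_K$ by the trace inequality and Korn's inequality on $K$ (this is where the rigid-motion enrichment for $k=0$ is essential).
\item Combining with $h_K^{1/2}\|(\boldsymbol{\sigma}-\boldsymbol{\sigma}_I)\boldsymbol{n}\|_{\partial K}\lesssim h^{k+1}\|\boldsymbol{\sigma}\|_{k+1,K}$ from \eqref{eqn_inq_sigma} gives the desired bound.
\end{itemize}

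Finally, \eqref{eq:hybridstability} must be applied to $(\boldsymbol{e}_\sigma,\boldsymbol{e}_u)$. This needs $\tr(\boldsymbol{e}_\sigma)\in L_0^2$, or else we normalize: the continuous $\tr\boldsymbol{\sigma}$ has zero mean by the same $\boldsymbol{\tau}=\boldsymbol{I}$ argument used in Theorem~\ref{thm:elassdg}. We can then shift $\boldsymbol{\sigma}_I$ by a multiple of $\boldsymbol{I}$ with negligible cost, or note that the Babuška--Brezzi argument for \eqref{eq:hybridstability} applies to general data. Adding the interpolation errors by the triangle inequality gives \eqref{eqn_H1errest}.
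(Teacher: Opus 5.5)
Your proposal is correct and follows the paper's proof. You compare with $(\boldsymbol{\sigma}_I,\boldsymbol{Q}_h\boldsymbol{u})$, obtain the error system from consistency, apply the stability estimate \eqref{eq:hybridstability}, and conclude with the interpolation bounds and the triangle inequality; your orthogonality-plus-Korn bound for $\sum_K\langle(\boldsymbol{\sigma}-\boldsymbol{\sigma}_I)\boldsymbol{n},\boldsymbol{v}_0-\boldsymbol{v}_b\rangle_{\partial K}$ makes explicit a step the paper leaves implicit. The trace normalization is in fact automatic, because the interior DoF \eqref{SigmaKDoFs3} with $\boldsymbol{\tau}=\boldsymbol{I}$ gives $\int_K\tr\boldsymbol{\sigma}_I\dx=\int_K\tr\boldsymbol{\sigma}\dx$, so $\tr(\boldsymbol{\sigma}_I-\boldsymbol{\sigma}_h)\in L_0^2(\Omega)$ and no shift is needed. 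The constraint cannot, however, simply be dropped as your alternative suggests, since $\boldsymbol{I}$ lies in the kernel of $b_h(\cdot,M_h^0)$ while $a_h(\boldsymbol{I},\boldsymbol{I})\sim\lambda^{-1}$.
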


\begin{proof}

From the continuous problem \eqref{equ_mixed_formulation}, for any $\boldsymbol{\tau}_h\in\Sigma_h^{-1}$ and $\boldsymbol{v}_h\in M_h^0$ we have

\begin{align*}
a_h(\boldsymbol{\sigma},\boldsymbol{\tau}_h)+b_h(\boldsymbol{\tau}_h,\boldsymbol{Q}_h\boldsymbol{u})&=-\sum_{T\in\mathcal{T}_h}h_T^2(\boldsymbol{f},\div \boldsymbol{\tau}_h)_T + \sum_{K\in\mathcal{K}_h}(\boldsymbol{\tau}_h,\boldsymbol{\varepsilon}(\boldsymbol{u}-\boldsymbol{Q}_{k+1}\boldsymbol{u}))_K\\
\notag
&\quad\; - \sum_{K\in\mathcal{K}_h}(\boldsymbol{\tau}_h\boldsymbol{n},\boldsymbol{u}-\boldsymbol{Q}_{k+1}\boldsymbol{u})_{\partial K},
\\
b_h(\boldsymbol{\sigma},\boldsymbol{v}_h)&=-(\boldsymbol{f},\boldsymbol{v}_0).
\end{align*}
Subtracting the discrete equations \eqref{elasshdg1}-\eqref{elasshdg2} from these yields the error system
\begin{subequations}\label{elasshdgerr}
\begin{align}
\label{elasshdgerr1}
a_h(\boldsymbol{\sigma}-\boldsymbol{\sigma}_h,\boldsymbol{\tau}_h)+b_h(\boldsymbol{\tau}_h,\boldsymbol{Q}_h\boldsymbol{u}-\boldsymbol{u}_h)&=\sum_{K\in\mathcal{K}_h}(\boldsymbol{\tau}_h,\boldsymbol{\varepsilon}(\boldsymbol{u}-\boldsymbol{Q}_{k+1}\boldsymbol{u}))_K
\\
\notag
&\quad\; - \sum_{K\in\mathcal{K}_h}(\boldsymbol{\tau}_h\boldsymbol{n},\boldsymbol{u}-\boldsymbol{Q}_{k+1}\boldsymbol{u})_{\partial K},
\\
\label{elasshdgerr2}
b_h(\boldsymbol{\sigma}_I-\boldsymbol{\sigma}_h,\boldsymbol{v}_h)\qquad\qquad\qquad\qquad\;\;&=\sum_{K\in\mathcal{K}_h}\langle (\boldsymbol{\sigma}_{I}-\boldsymbol{\sigma}) \boldsymbol{n},\boldsymbol{v}_0-\boldsymbol{v}_b\rangle_{\partial K}
\end{align}
\end{subequations}
for any $\boldsymbol{\tau}_h\in \Sigma_h^{-1}$ and $\boldsymbol{v}_h\in M_h^0$. We have used \eqref{eq:bhIh} for \eqref{elasshdgerr2}.

The sum of \eqref{elasshdgerr1} and \eqref{elasshdgerr2} gives
\begin{align*}
&\quad a_h(\boldsymbol{\sigma}_I-\boldsymbol{\sigma}_h,\boldsymbol{\tau}_h)+b_h(\boldsymbol{\tau}_h,\boldsymbol{Q}_h\boldsymbol{u}-\boldsymbol{u}_h)+b_h(\boldsymbol{\sigma}_I-\boldsymbol{\sigma}_h,\boldsymbol{v}_h) \\
&=a_h(\boldsymbol{\sigma}_I-\boldsymbol{\sigma},\boldsymbol{\tau}_h) + \sum_{K\in\mathcal{K}_h}(\boldsymbol{\tau}_h,\boldsymbol{\varepsilon}(\boldsymbol{u}-\boldsymbol{Q}_{k+1}\boldsymbol{u}))_K
- \sum_{K\in\mathcal{K}_h}(\boldsymbol{\tau}_h\boldsymbol{n},\boldsymbol{u}-\boldsymbol{Q}_{k+1}\boldsymbol{u})_{\partial K} \\
&\quad\; + \sum_{K\in\mathcal{K}_h}\langle (\boldsymbol{\sigma}_{I}-\boldsymbol{\sigma}) \boldsymbol{n},\boldsymbol{v}_0-\boldsymbol{v}_b\rangle_{\partial K}.
\end{align*}
Applying the stability estimate \eqref{eq:hybridstability}, the approximation property \eqref{eqn_inq_sigma}, and the standard estimate for the projection $Q_{k+1}$ gives

\begin{equation*}
\|\boldsymbol{\sigma}_I-\boldsymbol{\sigma}_h\|_{0,h}+\|\boldsymbol{Q}_h\boldsymbol{u}-\boldsymbol{u}_h\|_{1,h}\lesssim h^{k+1}(\|\boldsymbol{\sigma}\|_{k+1}+\|\boldsymbol{u}\|_{k+2}).
\end{equation*}
Finally, the triangle inequality and the approximation properties of $\boldsymbol{\sigma}_I$ and $\boldsymbol{Q}_h$ yield \eqref{eqn_H1errest}.
\end{proof}

Next, we consider the $L^2$-error estimate for $\|\boldsymbol{u}-\boldsymbol{u}_0\|$. Consider the dual problem
\begin{equation}
\left\{
\begin{aligned}
-\div\tilde{\boldsymbol{\sigma}}&=\boldsymbol{u}-\boldsymbol{u}_0 \quad \text{in} \;\Omega ,
\\
\mathcal{A}\tilde{\boldsymbol{\sigma}}&=\boldsymbol{\varepsilon}(\tilde{\boldsymbol{u}}) \quad\quad \text{in} \;\Omega,
\\
\tilde{\boldsymbol{u}}&=\boldsymbol{0} \qquad\quad\;\; \text{on} \;\partial\Omega.\label{eq:dualproblem}
\end{aligned}
\right.
\end{equation}
Assume the dual problem \eqref{eq:dualproblem} satisfies the regularity estimate
\begin{equation}\label{eq:dualregularity}
\|\tilde{\boldsymbol{\sigma}}\|_1+\|\tilde{\boldsymbol{u}}\|_2 \lesssim \|\boldsymbol{u}-\boldsymbol{u}_0\|.
\end{equation} 
The proof is standard and thus skipped here. 
\begin{theorem}
Let $\boldsymbol{\sigma}\in H^{k+1}(\Omega; \mathbb{S})$ and $\boldsymbol{u}\in H^{k+2}(\Omega; \mathbb{R}^d)$ be the solution of the linear elasticity \eqref{equ_elasticity}, and $\boldsymbol{\sigma}_h\in \Sigma_h^{-1}$ and $\boldsymbol{u}_h\in M_h^0$ be the solution of the hybridized method \eqref{elasshdg}.
Assume the regularity~\eqref{eq:dualregularity} holds.
We have 
\begin{equation}\label{eq:uL2error}
\|\boldsymbol{u}-\boldsymbol{u}_0\|\lesssim h^{k+2}(\|\boldsymbol{\sigma}\|_{k+1}+\|\boldsymbol{u}\|_{k+2}).
\end{equation}
\end{theorem}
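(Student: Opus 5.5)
The plan is the standard Aubin--Nitsche duality argument, carried out in the hybridized setting. Write $\boldsymbol{e}_h=\boldsymbol{Q}_h\boldsymbol{u}-\boldsymbol{u}_h=\{\boldsymbol{e}_0,\boldsymbol{e}_b\}\in M_h^0$ and $\boldsymbol{\xi}_h=\boldsymbol{\sigma}_I-\boldsymbol{\sigma}_h$. Since $\|\boldsymbol{u}-\boldsymbol{Q}_{k+1}\boldsymbol{u}\|\lesssim h^{k+2}\|\boldsymbol{u}\|_{k+2}$, it suffices to bound $\|\boldsymbol{e}_0\|$. In fact it is simpler to work directly with $\boldsymbol{u}-\boldsymbol{u}_0$, the right-hand side of the dual problem \eqref{eq:dualproblem}.

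Let $(\tilde{\boldsymbol{\sigma}},\tilde{\boldsymbol{u}})$ solve \eqref{eq:dualproblem}, and take its interpolants $\tilde{\boldsymbol{\sigma}}_I\in\Sigma_h$ and $\boldsymbol{Q}_h\tilde{\boldsymbol{u}}\in M_h^0$. Since $(\tilde{\boldsymbol{\sigma}},\tilde{\boldsymbol{u}})$ is an exact solution with load $\boldsymbol{u}-\boldsymbol{u}_0$, the consistency identities used in the previous proof apply, now with load $\boldsymbol{u}-\boldsymbol{u}_0$:
\[
\|\boldsymbol{u}-\boldsymbol{u}_0\|^2=-b_h(\tilde{\boldsymbol{\sigma}},\boldsymbol{v}_h)\big|_{\boldsymbol{v}_h=\boldsymbol{Q}_h\boldsymbol{u}-\boldsymbol{u}_h}+(\boldsymbol{u}-\boldsymbol{u}_0,\boldsymbol{u}-\boldsymbol{Q}_{k+1}\boldsymbol{u}).
\]
The last term is $O(h^{k+2})\|\boldsymbol{u}-\boldsymbol{u}_0\|$. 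For the first, I will replace $\tilde{\boldsymbol{\sigma}}$ by $\tilde{\boldsymbol{\sigma}}_I$ using \eqref{eq:bhIh}. The remainder $\sum_K\langle(\tilde{\boldsymbol{\sigma}}-\tilde{\boldsymbol{\sigma}}_I)\boldsymbol{n},\boldsymbol{e}_0-\boldsymbol{e}_b\rangle_{\partial K}$ is bounded by $h\|\tilde{\boldsymbol{\sigma}}\|_1\cdot h^{-1/2}\|\boldsymbol{e}_0-\boldsymbol{e}_b\|_{\partial K}$. Here I use that the DoFs \eqref{SigmaKDoFs1} make $(\tilde{\boldsymbol{\sigma}}-\tilde{\boldsymbol{\sigma}}_I)\boldsymbol{n}$ orthogonal to $V_k(F)$, so only $(I-\Pi_k^F)(\boldsymbol{e}_0-\boldsymbol{e}_b)$ enters. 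That piece is controlled through $\|\boldsymbol{\varepsilon}(\boldsymbol{e}_0)\|$ using the Korn-type equivalence \eqref{eq:KornineqltypieceH1}, which gives $h\cdot h^{k+1}$.

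Next I use the error equation \eqref{elasshdgerr1} with $\boldsymbol{\tau}_h=\tilde{\boldsymbol{\sigma}}_I$. This gives $b_h(\tilde{\boldsymbol{\sigma}}_I,\boldsymbol{e}_h)=-a_h(\boldsymbol{\sigma}-\boldsymbol{\sigma}_h,\tilde{\boldsymbol{\sigma}}_I)+(\text{projection terms})$. The projection terms $(\tilde{\boldsymbol{\sigma}}_I,\boldsymbol{\varepsilon}(\boldsymbol{u}-Q_{k+1}\boldsymbol{u}))_K-\langle\tilde{\boldsymbol{\sigma}}_I\boldsymbol{n},\boldsymbol{u}-Q_{k+1}\boldsymbol{u}\rangle_{\partial K}$ integrate by parts to $-(\div\tilde{\boldsymbol{\sigma}}_I,\boldsymbol{u}-Q_{k+1}\boldsymbol{u})$ plus jump terms on internal faces of $\mathcal T_h(K)$. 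I then subtract the piecewise polynomial parts, which vanish by orthogonality of $Q_{k+1}$ for $k\ge1$; this leaves $h^{k+2}\|\tilde{\boldsymbol{\sigma}}\|_1\|\boldsymbol{u}\|_{k+2}$.

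For the term $a_h(\boldsymbol{\sigma}-\boldsymbol{\sigma}_h,\tilde{\boldsymbol{\sigma}}_I)$, I first write it as $a_h(\boldsymbol{\sigma}-\boldsymbol{\sigma}_h,\tilde{\boldsymbol{\sigma}}_I-\tilde{\boldsymbol{\sigma}})+a_h(\boldsymbol{\sigma}-\boldsymbol{\sigma}_h,\tilde{\boldsymbol{\sigma}})$. The first piece is bounded by $h^{k+1}\cdot h\|\tilde{\boldsymbol{\sigma}}\|_1$ using \eqref{eqn_H1errest} and \eqref{eqn_inq_sigma}. In the second piece the jump terms vanish because $\tilde{\boldsymbol{\sigma}}$ is smooth. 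The $h_T^2(\div(\boldsymbol{\sigma}-\boldsymbol{\sigma}_h),\div\tilde{\boldsymbol{\sigma}})$ term gives $h\cdot h^{k+1}\|\boldsymbol{u}-\boldsymbol{u}_0\|$ after an inverse estimate. What remains is $(\mathcal A(\boldsymbol{\sigma}-\boldsymbol{\sigma}_h),\tilde{\boldsymbol{\sigma}})=(\boldsymbol{\sigma}-\boldsymbol{\sigma}_h,\boldsymbol{\varepsilon}(\tilde{\boldsymbol{u}}))$. I will handle it via the second error equation \eqref{elasshdgerr2} tested with $\boldsymbol{v}_h=\boldsymbol{Q}_h\tilde{\boldsymbol{u}}$: I write $\boldsymbol{\varepsilon}(\tilde{\boldsymbol{u}})=\boldsymbol{\varepsilon}(Q_{k+1}\tilde{\boldsymbol{u}})+\boldsymbol{\varepsilon}(\tilde{\boldsymbol{u}}-Q_{k+1}\tilde{\boldsymbol{u}})$, and the second part gives $h^{k+1}\cdot h\|\tilde{\boldsymbol{u}}\|_2$. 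The first part, combined with its boundary terms, equals $-b_h(\boldsymbol{\sigma}-\boldsymbol{\sigma}_h,\boldsymbol{Q}_h\tilde{\boldsymbol{u}})$ up to face terms $\langle(\boldsymbol{\sigma}-\boldsymbol{\sigma}_h)\boldsymbol{n},Q_{k+1}\tilde{\boldsymbol{u}}-\Pi_k^F\tilde{\boldsymbol{u}}\rangle$. These face terms are $O(h^{k+1}\cdot h)$ by $h^{1/2}\|(\boldsymbol{\sigma}-\boldsymbol{\sigma}_h)\boldsymbol{n}\|_{\partial K}\le\|\boldsymbol{\sigma}-\boldsymbol{\sigma}_h\|_{0,h}$ and $h^{-1/2}\|\tilde{\boldsymbol{u}}-Q\tilde{\boldsymbol{u}}\|_{\partial K}\lesssim h\|\tilde{\boldsymbol{u}}\|_2$. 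By Galerkin orthogonality, $b_h(\boldsymbol{\sigma}-\boldsymbol{\sigma}_h,\boldsymbol{Q}_h\tilde{\boldsymbol{u}})=-(\boldsymbol{f},\cdot)+(\boldsymbol{f},\cdot)=0$ exactly, because $\boldsymbol{Q}_h\tilde{\boldsymbol{u}}\in M_h^0$.

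Collecting the terms and using \eqref{eq:dualregularity} gives $\|\boldsymbol{u}-\boldsymbol{u}_0\|^2\lesssim h^{k+2}(\|\boldsymbol{\sigma}\|_{k+1}+\|\boldsymbol{u}\|_{k+2})\|\boldsymbol{u}-\boldsymbol{u}_0\|$, which is \eqref{eq:uL2error}. I expect the main obstacle to be the lowest-order case $k=0$, where $\div\Sigma_0=0$ and $V_0(F)={\rm RM}(K)|_F$ change the orthogonality arguments. The projection terms in the third paragraph must then be handled by exploiting that $\boldsymbol{u}-Q_1\boldsymbol{u}\perp\mathbb P_1$, and the jumps on internal faces of $\mathcal T_h(K)$ must be checked carefully. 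Uniformity in $\lambda$ also requires that the factor $(\mathcal A\cdot,\cdot)$ only ever be paired as above with $\boldsymbol{\varepsilon}(\tilde{\boldsymbol{u}})$, never bounded through $\|\mathcal A\|$.
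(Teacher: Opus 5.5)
The paper skips this proof as ``standard'', and your Aubin--Nitsche argument is the natural route. For $k\ge 1$ every step goes through. There is, however, a genuine gap at $k=0$, in a step you did not flag.

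\textbf{Where it fails.} The face term $\sum_{K}\langle(\boldsymbol{\sigma}-\boldsymbol{\sigma}_h)\boldsymbol{n},Q_{k+1}\tilde{\boldsymbol{u}}-\Pi_k^F\tilde{\boldsymbol{u}}\rangle_{\partial K}$ in your last step cannot be bounded face by face at $k=0$.
\begin{itemize}
\item Your bound needs $h_F^{-1/2}\|\tilde{\boldsymbol{u}}-\Pi_k^F\tilde{\boldsymbol{u}}\|_F\lesssim h\|\tilde{\boldsymbol{u}}\|_2$. This is true for $k\ge1$, since then $V_k(F)\supseteq\mathbb P_1(F;\mathbb R^d)$.
\item It is false for $k=0$, because $V_0(F)={\rm RM}(K)|_F$ does not contain all linear fields; in 2D its tangential component is only constant. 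For example, if $\tilde{\boldsymbol u}=(x,0)^{\intercal}$ on $K$ with $F\subset\{y=0\}$, then $h_F^{-1/2}\|\tilde{\boldsymbol u}-\Pi_0^F\tilde{\boldsymbol u}\|_F\eqsim|\tilde{\boldsymbol u}|_{1,K}$, while $|\tilde{\boldsymbol u}|_{2,K}=0$. Summed over faces this quantity is of order $|\tilde{\boldsymbol u}|_1$, not $O(h)$.
\item Face-wise orthogonality does not rescue it. One has $(I-\Pi_0^F)(\boldsymbol\sigma-\boldsymbol\sigma_h)\boldsymbol n=(\boldsymbol\sigma-\boldsymbol\sigma_I)\boldsymbol n$, but with only $\boldsymbol\sigma\in H^1$ this gives $h^{1/2}|\boldsymbol\sigma|_{1,K}\cdot h^{1/2}|\tilde{\boldsymbol u}|_{1,K}$. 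That yields only $\|\boldsymbol u-\boldsymbol u_0\|\lesssim h$, not $h^2$.
\end{itemize}

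\textbf{The missing ingredient} is a global cancellation coming from normal continuity.
\begin{itemize}
\item By the hybridization theorem, $\boldsymbol\sigma_h\in\Sigma_h$, so $(\boldsymbol\sigma-\boldsymbol\sigma_h)\boldsymbol n$ is single-valued on interior primal faces.
\item Both $\tilde{\boldsymbol u}$ and $\Pi_k^F\tilde{\boldsymbol u}$ are single-valued and vanish on $\partial\Omega$. Hence $\sum_K\langle(\boldsymbol\sigma-\boldsymbol\sigma_h)\boldsymbol n,\tilde{\boldsymbol u}-\Pi_k^F\tilde{\boldsymbol u}\rangle_{\partial K}=0$.
\item The face term therefore equals $\sum_K\langle(\boldsymbol\sigma-\boldsymbol\sigma_h)\boldsymbol n,Q_{k+1}\tilde{\boldsymbol u}-\tilde{\boldsymbol u}\rangle_{\partial K}\lesssim h\|\boldsymbol\sigma-\boldsymbol\sigma_h\|_{0,h}\|\tilde{\boldsymbol u}\|_2$ for every $k\ge0$, because $Q_{k+1}$ reproduces $\mathbb P_1(K;\mathbb R^d)$.
\end{itemize}
With this change your proof closes for all $k\ge0$.

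\textbf{Where you expected trouble but there is none.}
\begin{itemize}
\item The projection terms need no orthogonality of $Q_{k+1}$. Bound $\|\div\tilde{\boldsymbol\sigma}_I\|_T\lesssim\|\tilde{\boldsymbol\sigma}\|_{1,K}$ against $\|\boldsymbol u-Q_{k+1}\boldsymbol u\|_K\lesssim h^{k+2}\|\boldsymbol u\|_{k+2,K}$. Bound the internal jumps $\|[(\tilde{\boldsymbol\sigma}_I-\tilde{\boldsymbol\sigma})\boldsymbol n]\|_F\lesssim h^{1/2}\|\tilde{\boldsymbol\sigma}\|_{1,K}$ against $\|\boldsymbol u-Q_{k+1}\boldsymbol u\|_F\lesssim h^{k+3/2}\|\boldsymbol u\|_{k+2,K}$. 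This works directly for $k=0$ as well.
\item $\|\mathcal A\|\le(2\mu)^{-1}$ uniformly in $\lambda$, so bounding through $\|\mathcal A\|$ costs nothing. Robustness in $\lambda$ only requires $\lambda$-independent constants in \eqref{eqn_H1errest} and \eqref{eq:dualregularity}.
\end{itemize}
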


\section{Numerical Examples}\label{sec:numerexam}
In this section, numerical experiments are presented to illustrate the methods given in this paper. 
The numerical experiments are developed based on the MATLAB package $i$FEM \cite{Chen_iFem}.

\subsection{Triangle mesh}
We consider the unit square domain $\Omega=(0,1)^2$ and apply a triangular mesh discretization $\mathcal{K}_h$; see Fig.~\ref{fig:triangulation1} for the mesh size $h=0.25$. By connecting the barycenters and the vertices of each triangle, we obtain a refined triangular mesh $\mathcal{T}_h$;
see Fig.~\ref{fig:triangulation2}. 

\begin{figure}[htbp]
\subfigure[A triangle partition $\mathcal{K}_h$.]{
\begin{minipage}[t]{0.45\linewidth}
\centering
\includegraphics*[width=4.8cm]{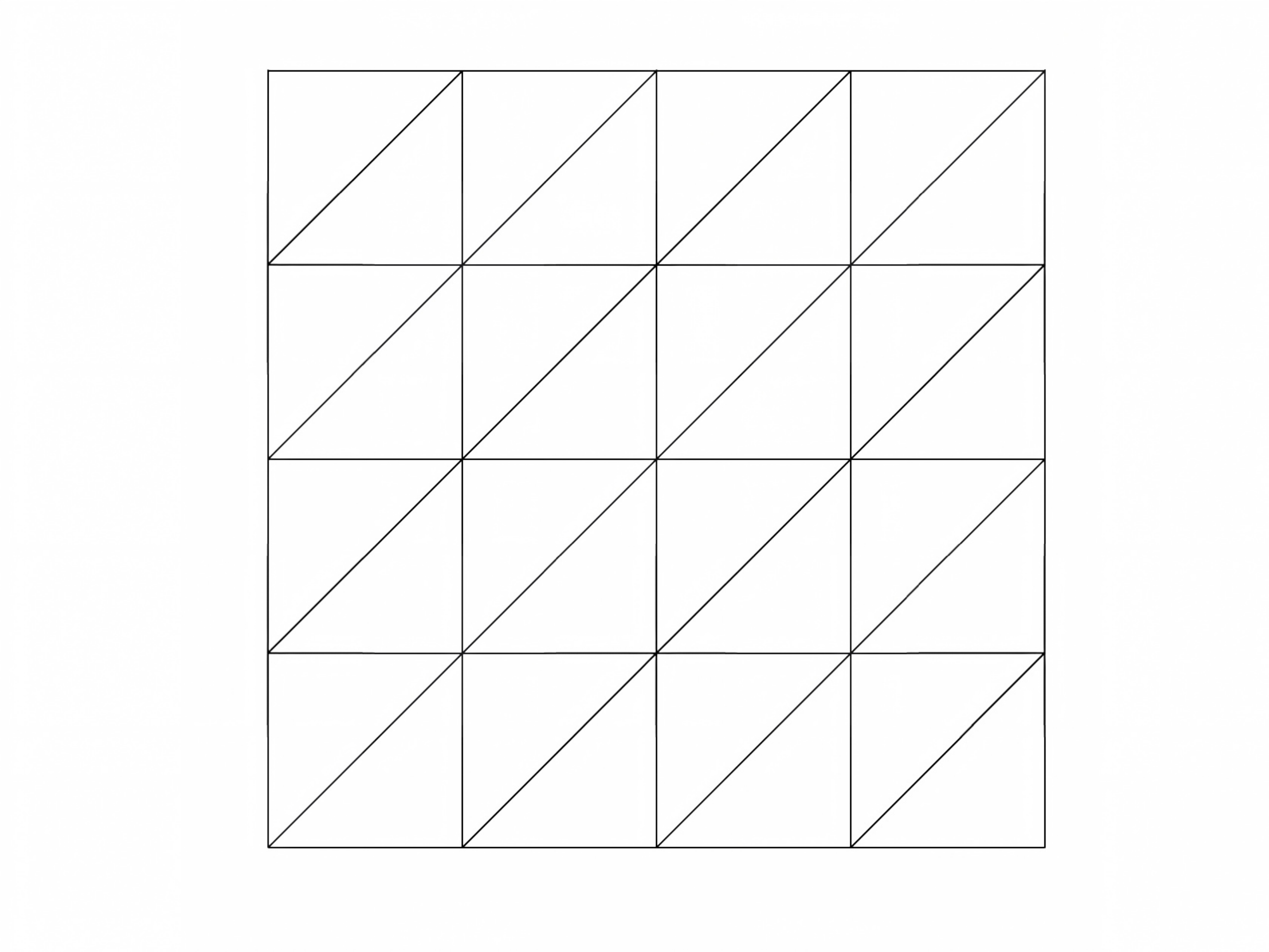}\label{fig:triangulation1}
\end{minipage}}
\subfigure[Triangulation $\mathcal{T}_h$ for $\mathcal{K}_h$.]
{\begin{minipage}[t]{0.45\linewidth}
\centering
\includegraphics*[width=4.8cm]{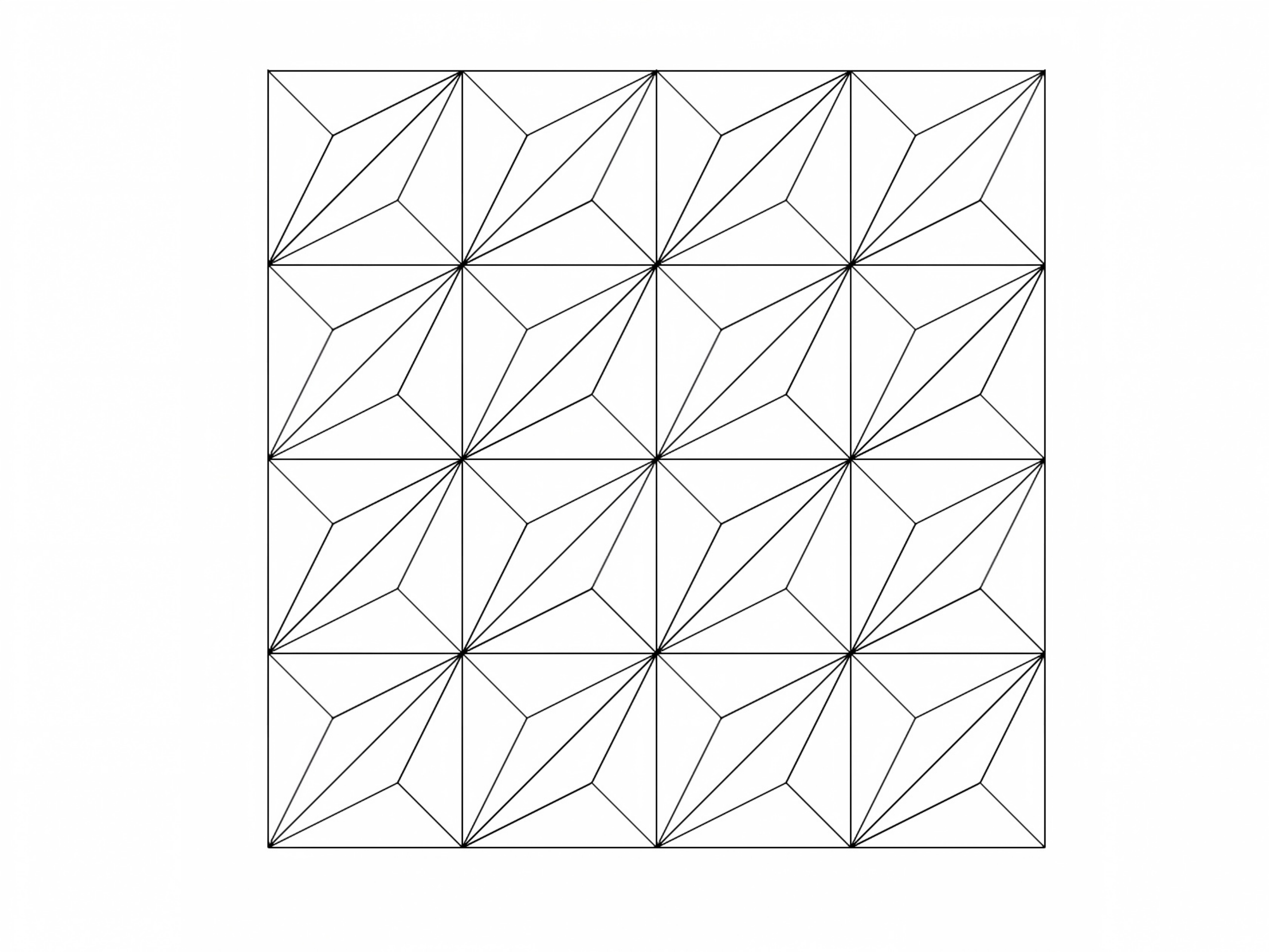}\label{fig:triangulation2}
\end{minipage}}
\caption{Partitions of $\Omega$ for $h=0.25$.}
\label{fig:triangulation}
\end{figure}

In this example, we examine the locking-free property of the numerical scheme \eqref{elasshdg}. As the parameter $\lambda$ increases, the numerical error of the scheme should not exhibit growth that depends on $\lambda$.
We consider the exact solution and the right-hand side is correspondingly given by
\begin{equation}\label{eqn_NM_realu}
\mathbf{u}=\left(\begin{array}{l}\cos(\pi x) \cos(\pi y)\\ 
\sin(\pi x) \sin(\pi y)
\end{array}\right)
\quad 
f=2\mu\pi^2 \left(\begin{array}{l}\cos(\pi x) \cos(\pi y)\\ 
\sin(\pi x) \sin(\pi y)
\end{array}\right).
\end{equation}

Table \ref{Ex1_LockingFree} presents the errors and convergence rates between the numerical solution of \eqref{elasshdg} in the lowest order case $k=0$ and
the real solution \eqref{eqn_NM_realu}, with $\mu=1$ and $\lambda=1, 10^2, 10^4, 10^6$. 
The norms are given by 
\begin{align*}
\|\boldsymbol{u}-\boldsymbol{u}_0\|^2&=\sum_{K\in\mathcal{K}_h}\|\boldsymbol{u}-\boldsymbol{u}_0\|_K^2, 
\\
\|\boldsymbol{u}-\boldsymbol{u}_h\|_{1,h}^2&=\sum_{K\in\mathcal{K}_h}\|\boldsymbol{\varepsilon}(\boldsymbol{u}-\boldsymbol{u}_0)\|_K^2
+\sum_{K\in\mathcal{K}_h}h_K^{-1}\|\Pi_k^F\boldsymbol{u}_0-\boldsymbol{u}_b\|_{\partial K}^2, 
\\
\|\boldsymbol{\sigma}-\boldsymbol{\sigma}_h\|^2_{0,h}&=\sum_{K\in\mathcal{K}_h}\|\boldsymbol{\sigma}-\boldsymbol{\sigma}_h\|_K^2+\sum_{F\in {\mathcal{F}}_h^{T}\setminus {\mathcal{F}}_h^{K}}h_F\|[\boldsymbol{\sigma}_h\boldsymbol{n}]\|_{F}^2.
\end{align*}
The data in the table confirm that the numerical error remains independent of the parameter $\lambda$.

\begin{table}[!ht]
  \centering
  \caption{Example 1 (Locking free):
    Error and the convergence rate. }
    \renewcommand{\arraystretch}{1.125}
    \resizebox{11.5cm}{!}{
  \begin{tabular}{@{}c c c c c c c c @{}}
    \toprule
  $ h $  &    $N_K$   &  $ \|\boldsymbol{u}-\boldsymbol{u}_0\| $  
  &  Rate   &   $ \|\boldsymbol{u}-\boldsymbol{u}_h\|_{1,h} $  &  Rate &
  $ \|\boldsymbol{\sigma}-\boldsymbol{\sigma}_h\|_{0,h} $ & Rate\\
    \hline
    \multicolumn{8}{c}{$\lambda=1$}\\
    \hline
1.250e-01  &   128 &  4.37529e-02 &  -- &  1.25560e+00 &  -- &  1.24368e+00 &  --\\
6.250e-02  &   512 &  1.09062e-02 &  2.00 &  6.18037e-01 &  1.02 &  4.85307e-01 &  1.36\\
3.125e-02  &  2048 &  2.72453e-03 &  2.00 &  3.07778e-01 &  1.01 &  2.07986e-01 &  1.22\\
1.562e-02  &  8192 &  6.81071e-04 &  2.00 &  1.53732e-01 &  1.00 &  9.52801e-02 &  1.13\\
7.812e-03  & 32768 &  1.70272e-04 &  2.00 &  7.68458e-02 &  1.00 &  4.54582e-02 &  1.07\\
    \hline
    \multicolumn{8}{c}{$\lambda=10^2$}\\
    \hline
1.250e-01 &    128 &  4.01417e-02  &  --  &  1.24811e+00 &  -- &  1.25482e+00  & --\\
6.250e-02 &    512 &  1.00279e-02  &  2.00 &  6.16168e-01 &  1.02 &  4.90747e-01 &  1.35\\
3.125e-02 &   2048 &  2.50798e-03  &  2.00 &  3.07038e-01 &  1.00 &  2.10577e-01 &  1.22\\
1.562e-02 &   8192 &  6.27143e-04  &  2.00 &  1.53382e-01 &  1.00 &  9.65415e-02 &  1.13\\
7.812e-03 &  32768 &  1.56797e-04  &  2.00 &  7.66733e-02 &  1.00 &  4.60843e-02 &  1.07\\
    \hline
    \multicolumn{8}{c}{$\lambda=10^4$}\\
    \hline
1.250e-01  &   128 &  4.00644e-02 &  -- &  1.24802e+00 &  -- &  1.25524e+00 &  --\\
6.250e-02  &   512 &  1.00140e-02 &  2.00 &  6.16165e-01 &  1.02 &  4.90943e-01 &  1.35\\
3.125e-02  &  2048 &  2.50543e-03 &  2.00 &  3.07040e-01 &  1.00 &  2.10661e-01 &  1.22\\
1.562e-02  &  8192 &  6.26628e-04 &  2.00 &  1.53384e-01 &  1.00 &  9.65786e-02 &  1.13\\
7.812e-03  & 32768 &  1.56683e-04 &  2.00 &  7.66740e-02 &  1.00 &  4.61017e-02 &  1.07\\
    \hline
    \multicolumn{8}{c}{$\lambda=10^6$}\\
    \hline
1.250e-01 &    128 &  4.00636e-02 &  -- &  1.24801e+00 &  -- &  1.25524e+00 &  --\\
6.250e-02 &    512 &  1.00138e-02 &  2.00 &  6.16165e-01 &  1.02 &  4.90945e-01 &  1.35\\
3.125e-02 &   2048 &  2.50540e-03 &  2.00 &  3.07040e-01 &  1.00 &  2.10662e-01 &  1.22\\
1.562e-02 &   8192 &  6.26623e-04 &  2.00 &  1.53384e-01 &  1.00 &  9.65790e-02 &  1.13\\
7.812e-03 &  32768 &  1.56682e-04 &  2.00 &  7.66740e-02 &  1.00 &  4.61019e-02 &  1.07\\
    \bottomrule
  \end{tabular}}
\label{Ex1_LockingFree}
\end{table}

\subsection{Polygon mesh}
In this example, we consider the polygon meshes on the unit square domain $\Omega = (0,1)^2$. Fig.~\ref{fig_polygonmesh1} illustrates the partition into $16$ polygonal elements.
By connecting the barycenters and vertices of each polygon, we obtain a refined triangular mesh $\mathcal{T}_h$, as shown in Fig.~\ref{fig_polygonmesh2}.
The exact solution and the right-hand side are chosen to be the same as in Example 1.
Let $\mu = 1$ and $\lambda = 1$. The errors and convergence rates are presented in Table \ref{Ex2_P0P1poly} for $k = 0$ and in Table \ref{Ex2_P1P2poly} for $k = 1$.
We observe convergence orders of $k+1$ for $\|\boldsymbol{u}-\boldsymbol{u}_h\|_{1,h}$ and $\|\boldsymbol{\sigma}-\boldsymbol{\sigma}_h\|_{0,h}$, and $k+2$ for $ \|\boldsymbol{u}-\boldsymbol{u}_0\| $, which are consistent with our theoretical analysis.

\begin{figure}[htbp]
\subfigure[A polygon partition $\mathcal{K}_h$.]{
\begin{minipage}[t]{0.45\linewidth}
\centering
\includegraphics*[width=4.8cm]{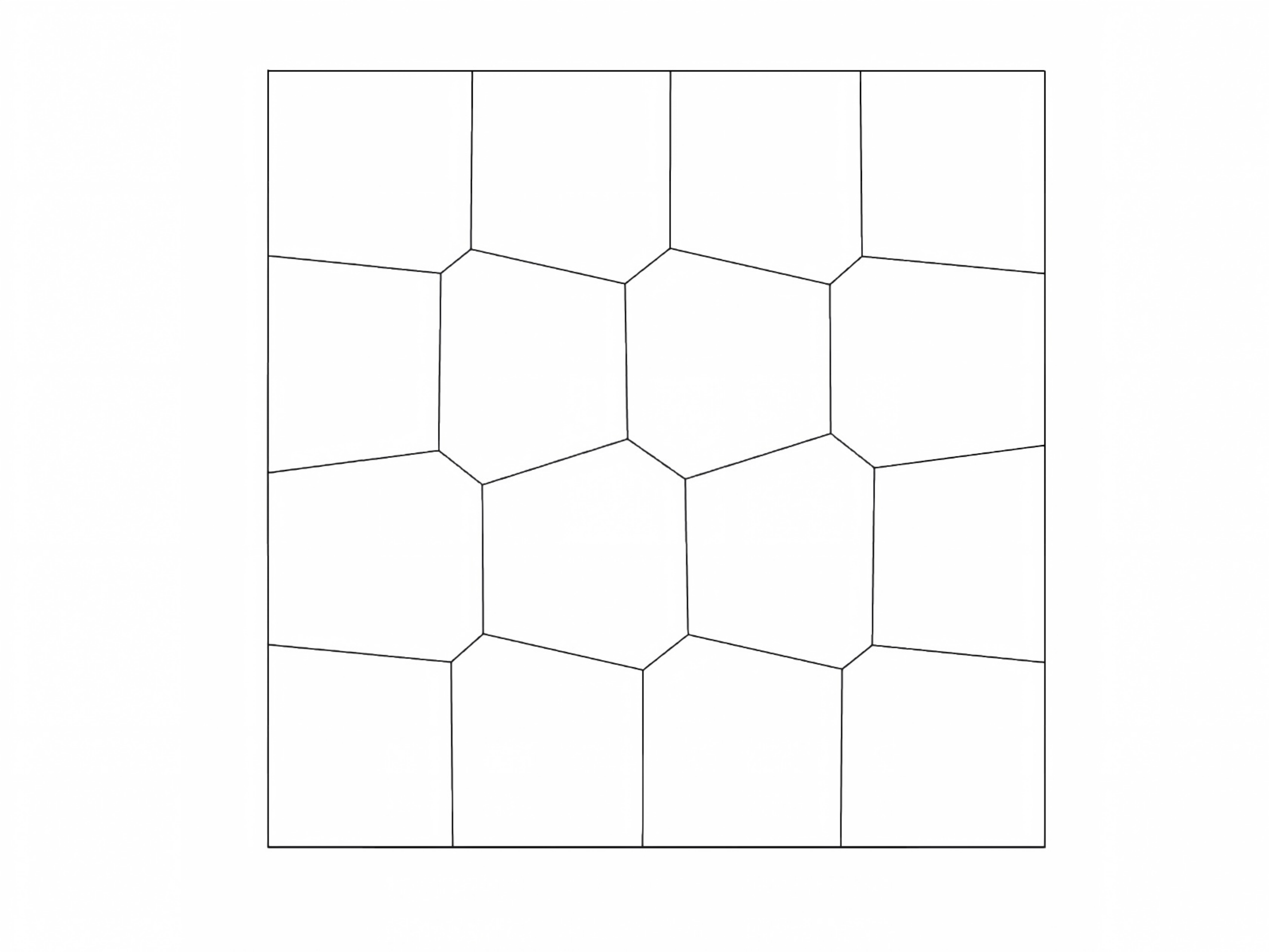}\label{fig_polygonmesh1}
\end{minipage}}
\subfigure[Triangulation $\mathcal{T}_h$ for $\mathcal{K}_h$.]
{\begin{minipage}[t]{0.45\linewidth}
\centering
\includegraphics*[width=4.8cm]{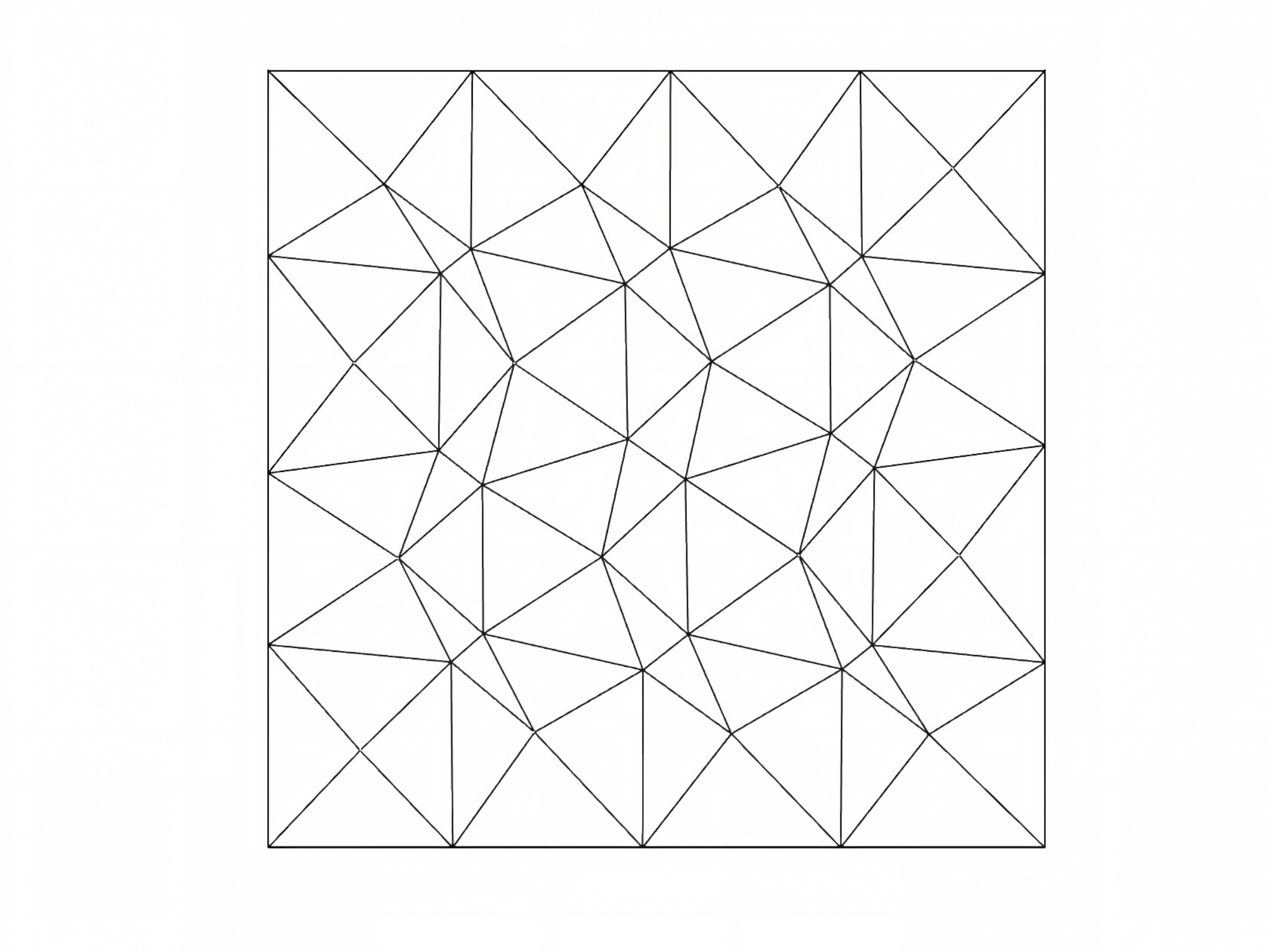}\label{fig_polygonmesh2}
\end{minipage}}
\caption{A polygonal mesh of $\Omega$ with $N_K=16$.}
\label{fig_polygonmesh}
\end{figure}

\begin{table}[!ht]
  \centering
  \caption{Example 2 (Polygon mesh):
    Error and the convergence rate for $k=0$.}
    \renewcommand{\arraystretch}{1.125}
    \resizebox{11.5cm}{!}{
  \begin{tabular}{@{}c c c c c c c c @{}}
    \toprule
  $ h $   &    $ N_K$ & $ \|\boldsymbol{u}-\boldsymbol{u}_0\| $  
  &  Rate   &   $ \|\boldsymbol{u}-\boldsymbol{u}_h\|_{1,h} $  &  Rate &
  $ \|\boldsymbol{\sigma}-\boldsymbol{\sigma}_h\|_{0,h} $ & Rate
    \\
    \hline
1.250e-01 &     64 &  5.22450e-02 &  -- &  1.13788e+00 &  -- &  9.64047e-01 &  --\\
6.250e-02 &    256 &  1.21551e-02 &  2.10 &  5.11926e-01 &  1.15 &  4.61249e-01 &  1.06\\
3.125e-02 &   1024 &  2.99231e-03 &  2.02 &  2.52590e-01 &  1.02 &  2.31233e-01 &  1.00\\
1.562e-02 &   4096 &  7.42393e-04 &  2.01 &  1.24521e-01 &  1.02 &  1.14865e-01 &  1.01\\
7.069e-03 &  20014 &  1.83153e-04 &  2.02 &  6.16926e-02 &  1.01 &  5.71603e-02 &  1.01\\
    \bottomrule
  \end{tabular}}
\label{Ex2_P0P1poly}
\end{table}

\begin{table}[!ht]
  \centering
  \caption{Example 2 (Polygon mesh):
    Error and the convergence rate for $k=1$.}
    \renewcommand{\arraystretch}{1.125}
    \resizebox{11.5cm}{!}{
  \begin{tabular}{@{}c c c c c c c c @{}}
    \toprule
  $ h $   &    $ N_K$ & $ \|\boldsymbol{u}-\boldsymbol{u}_0\| $  
  &  Rate   &   $ \|\boldsymbol{u}-\boldsymbol{u}_h\|_{1,h} $  &  Rate &
  $ \|\boldsymbol{\sigma}-\boldsymbol{\sigma}_h\|_{0,h} $ & Rate
    \\
    \hline
1.250e-01 &     64 &  1.43011e-03 &  -- &  6.57950e-02 &  -- &  3.66878e-02 &  --\\
6.250e-02 &    256 &  1.52928e-04 &  3.23 &  1.45689e-02 &  2.18 &  7.99437e-03 &  2.20\\
3.125e-02 &   1024 &  1.90678e-05 &  3.00 &  3.66617e-03 &  1.99 &  2.03719e-03 &  1.97\\
1.562e-02 &   4096 &  2.29156e-06 &  3.06 &  8.95349e-04 &  2.03 &  4.92205e-04 &  2.05\\
7.069e-03 &  20014 &  2.84402e-07 &  3.01 &  2.22949e-04 &  2.01 &  1.21800e-04 &  2.01\\
    \bottomrule
  \end{tabular}}
\label{Ex2_P1P2poly}
\end{table}

\section{Conclusion}\label{sec:conclusion}

In this paper, we develop and analyze a SDG method for linear elasticity problems. The formulation is derived from the Hellinger-Reissner variational principle, which enables the simultaneous approximation of both the stress tensor and the displacement field. One of the central contribution of this work is the construction of strongly symmetric stress tensors that enforce normal continuity across element boundaries on general polytopal meshes. The careful design of the DoFs for the stress approximation plays a crucial role in achieving this structure.

To achieve this structure, the stress tensor is represented as a piecewise discontinuous polynomial within each polygonal element. The DoFs for the stress are defined without any vertex-based unknowns, which significantly simplifies the implementation after using the hybridization techniques. The use of hybridization not only reduces programming complexity but also, when combined with a Schur complement approach, leads to a final stiffness matrix that involves only DoFs associated with element boundaries. This can substantially decrease the computational cost.

\bibliographystyle{siamplain}
 \bibliography{Ela_SDG}

\end{document}